%% LyX 2.3.5.2 created this file.  For more info, see http://www.lyx.org/.
%% Do not edit unless you really know what you are doing.
\documentclass[11pt, english]{article}

 %%%%%%%%%%%%%%%%%%%%%%%%%%%%%% User specified LaTeX commands.

\usepackage[latin9]{inputenc}
\setlength{\parskip}{\smallskipamount}
\usepackage{amstext}
\usepackage{amsthm}
\usepackage{amssymb}
\usepackage{soul}
\usepackage{xfrac}
\usepackage{amsmath}
\usepackage{bm}
\usepackage{enumitem}
\usepackage[dvipsnames]{xcolor}

\makeatletter

\numberwithin{equation}{section}
\numberwithin{figure}{section}

\newtheorem{main}{Main Theorem}
\setlength{\textwidth}{16cm}
\setlength{\textheight}{22cm}
\setlength{\topmargin}{-1.3cm}
\setlength{\oddsidemargin}{0mm}

\usepackage{amsthm}
\usepackage{fullpage}
\usepackage{makeidx}
\usepackage{amsfonts}
\usepackage{latexsym}
\numberwithin{equation}{section}

\numberwithin{figure}{section}

\newtheorem{thm}{Theorem}[section]
\newtheorem{prop}[thm]{Proposition}
\newtheorem{lemma}[thm]{Lemma}
\newtheorem{defn}[thm]{Definition}
\newtheorem{coro}[thm]{Corollary}
\newtheorem{rem}[thm]{Remark}

\usepackage{babel}

\newcommand{\Z}{\mathbb{Z}}

\newcommand{\C}{\mathbb{C}}

 \global\long\def\h{\mathfrak{h}}

 \global\long\def\hh{\hat{\mathfrak{h}} }
 \global\long\def\n{\mathfrak{n}}
% \global\long\def\Z{\mathbb{Z}}

\usepackage{babel}

\usepackage[blocks]{authblk}% The option is for
\title{  On irreducibility of modules of Whittaker type: twisted modules and  nonabelian orbifolds }

\author{Dra\v{z}en Adamovi\'c, Ching Hung Lam,  Veronika Pedi\' c Tomi\' c and Nina Yu}

\usepackage{babel}

\makeatother

\begin{document}
\maketitle

{\centering\footnotesize {\em Dedicated to Masahiko Miyamoto on the occasion of his 70th birthday} \par}

\abstract{In \cite{ALPY},  we extended  the Dong-Mason theorem on irreducibility of modules for cyclic orbifold vertex algebras (cf. \cite{DM}) to the entire category weak modules and applied this result to Whittaker modules.  In this paper, we present further generalizations  of these results for nonabelian orbifolds of vertex operator superalgebras.
%and  extend the Dong-Mason theorem on irreducibility of modules for orbifold vertex algebras (cf. \cite{DM}) to the category of modules of Whittaker type for vertex  superalgebras.
Let $V$  be a vertex  superalgebra with a countable dimension and let $G$ be a finite subgroup of $\mathrm{Aut}(V)$. Assume that $h\in Z(G)$ where $Z(G)$ is the center of the group $G$.  For any irreducible $h$--twisted (weak) $V$--module $M$, we prove that if $M\not\cong g\circ M$ for all $g\in G$  then $M$ is also irreducible as $V^G$--module. We also apply this result to examples and give irreducibility of modules of Whittaker type for  orbifolds of  Neveu-Schwarz vertex superalgebras, Heisenberg vertex algebras, Virasoro vertex operator algebra and Heisenberg-Virasoro vertex algebra.

\vspace{.5em}
\noindent{\bf Keywords}: Vertex superalgebras, week modules, Whittaker modules, nonabelian orbifolds,  twisted modules.
\vspace{.5em}

\section{Introduction}

\subsection*{Quantum Galois theory}Quantum Galois theory
for vertex operator algebras  has been initiated by C. Dong and
G. Mason in \cite{DM}. Let $U$ be a vertex operator algebra and $g$ {an automorphism of $U$ with finite order.}  It is proved in
\cite{DM} that if $M$ is an irreducible ordinary $U$--module such that it is not isomorphic to $
 g^{i}\circ M$ for all $i,$ then $M$
is also an irreducible module for the subalgebra $U^{\left\langle g\right\rangle } $ of fixed points under {the cyclic group} $\left\langle g\right\rangle$ { generated by $g$}. This result is important for the construction of irreducible modules
for some vertex operator algebras. This result is extended in various directions in \cite{DLM,  DM2,DY, MT}.
\subsection*{Results from \cite{ALPY}}
We should mention that all above mentioned approaches are using concepts such as Zhu algebra, higher Zhu algebras, modular invariance, which can not be applied beyond the categories of ordinary or admissible modules. But recent progress in representation theory of infinite-dimensional Lie algebras and superalgebras shows that it is natural to investigate Whittaker modules and other weak modules which are not ordinary modules and on which one cannot apply Zhu algebra or higher Zhu algebras. Therefore in \cite{ALPY} we started a program to extend results from Quantum Galois Theory  to weak modules, so they can be applied beyond the category of ordinary modules.

In \cite{ALPY}, we prove an extension
of Dong-Mason theorem for weak modules of Whittaker type:

 \begin{thm}[\cite{ALPY}] \label{general-intr-whittake-old}
Let $W$ be an irreducible weak $V$--module such that all $W_{i}=   g^{i} \circ W $
are Whittaker modules whose Whittaker functions $\lambda^{(i)}=\mathfrak{n}\rightarrow{\C}$ are mutually distinct.
Then $W$ is an irreducible weak  $V^{\left\langle g\right\rangle }$--module.
\end{thm}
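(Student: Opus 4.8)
\emph{Proof idea.} Set $G=\langle g\rangle$ (of order $T$) and let $\omega$ be a primitive $T$th root of unity. Let $\mathcal A\subseteq\mathrm{End}(W)$ be the associative algebra generated by all modes $v_{(n)}$, $v\in V$, $n\in\Z$. Since $g$ acts on the modes of $V$ by $g\cdot v_{(n)}=(gv)_{(n)}$, the algebra $\mathcal A$ is $\Z/T\Z$--graded, $\mathcal A=\sum_{d\in\Z/T\Z}\mathcal A_{d}$, where $\mathcal A_{d}$ is spanned by the products $v^{(1)}_{(n_{1})}\cdots v^{(r)}_{(n_{r})}$ with $g v^{(i)}=\omega^{e_{i}}v^{(i)}$ and $e_{1}+\dots+e_{r}\equiv d$. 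Then $\mathcal A_{0}$ contains every mode of the fixed--point subalgebra $V^{\langle g\rangle}$, and, conversely, the Borcherds (associativity) identities show that any product of modes of total $g$--degree $0$ acts on $W$ as a locally finite sum of modes $y_{(m)}$ with $y\in V^{\langle g\rangle}$; consequently a subspace of $W$ is stable under $\mathcal A_{0}$ precisely when it is a weak $V^{\langle g\rangle}$--submodule. Finally, since $W$ is $V$--irreducible, $\mathcal A w=W$ for the Whittaker vector $w$ of $W=W_{0}$. On the module side, $\n$ is $g$--stable, so $\n=\bigoplus_{d}\n_{d}$ (with $g|_{\n_{d}}=\omega^{d}$), each $W_{i}=g^{i}\circ W$ is a Whittaker module with the same vector $w$ and Whittaker function $\lambda^{(i)}=\lambda^{(0)}\circ g^{\,i}$, and therefore $\lambda^{(0)},\dots,\lambda^{(T-1)}$ are mutually distinct iff $\lambda^{(0)}\circ g^{\,k}\neq\lambda^{(0)}$ for all $k\not\equiv 0$; as $\lambda^{(0)}\circ g^{\,k}=\lambda^{(0)}$ exactly when $\lambda^{(0)}$ vanishes on every $\n_{d}$ with $\omega^{dk}\neq 1$, this happens iff $S:=\{\,d:\lambda^{(0)}|_{\n_{d}}\neq 0\,\}$ lies in no proper subgroup of $\Z/T\Z$, i.e. iff $\langle S\rangle=\Z/T\Z$; since $\Z/T\Z$ is finite, this in turn means that every residue class is a sum of finitely many elements of $S$.

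\emph{Step 1: $W=V^{\langle g\rangle}\!\cdot w$.} Fix $j\in\Z/T\Z$ and choose, by the last remark, $a_{\ell}\in\n_{d_{\ell}}$ with $d_{\ell}\in S$, $\lambda^{(0)}(a_{\ell})\neq 0$ and $d_{1}+\dots+d_{s}\equiv -j$; then $a:=a_{1}\cdots a_{s}\in\mathcal A_{-j}$ satisfies $a w=c\,w$ with $c=\prod_{\ell}\lambda^{(0)}(a_{\ell})\neq 0$. Hence for any $X\in\mathcal A_{j}$ we have $X w=c^{-1}(Xa)w$ with $Xa\in\mathcal A_{j}\mathcal A_{-j}\subseteq\mathcal A_{0}$, so $X w$ lies in the weak $V^{\langle g\rangle}$--submodule generated by $w$. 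Summing over $j$ and using $\mathcal A w=\sum_{j}\mathcal A_{j}w=W$ gives $W=V^{\langle g\rangle}\!\cdot w$.

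\emph{Step 2: every nonzero weak $V^{\langle g\rangle}$--submodule $U$ of $W$ contains $w$.} This is the heart of the argument and the place where the Whittaker structure is genuinely used. Since $W$ is of Whittaker type it carries an exhausting filtration $\C w=F_{0}W\subseteq F_{1}W\subseteq\cdots$ with respect to which every $a\in\n$ acts as $\lambda^{(0)}(a)\,\mathrm{id}$ plus a strictly filtration--lowering operator, with symbol a degree--lowering operator $D_{a}$ on $\mathrm{gr}\,W$, and, by irreducibility of $W$ over $V$ (equivalently, uniqueness of the Whittaker vector), the $D_{a}$ with $a\in\n$ have no common zero in positive filtration degree. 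Let $0\neq u\in U$ have filtration degree $N\geq 1$. For any $d$, a relation $d+d'_{1}+\dots+d'_{r}\equiv 0$ with $d'_{i}\in S$ (which exists since $\langle S\rangle=\Z/T\Z$) yields a product $a\,a'_{1}\cdots a'_{r}\in\mathcal A_{0}$, with $a\in\n_{d}$ arbitrary and $a'_{i}\in\n_{d'_{i}}$, $\lambda^{(0)}(a'_{i})\neq 0$, whose leading (one--step) filtration--lowering symbol equals a nonzero multiple of $D_{a}$ plus a combination of the $D_{a'_{i}}$; varying the arguments one realizes, modulo operators lowering the filtration by more than one step, every $D_{a}$ ($a\in\n$) inside $\mathcal A_{0}$. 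Choosing such a $b\in\mathcal A_{0}$ whose leading symbol does not annihilate the leading symbol of $u$, the vector $b u-(\text{scalar})\,u$ lies in $U$, is nonzero, and has filtration degree $<N$. Iterating reaches a nonzero element of $F_{0}W=\C w$, so $w\in U$.

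By Steps 1 and 2, a nonzero weak $V^{\langle g\rangle}$--submodule of $W$ contains $w$, hence contains $V^{\langle g\rangle}\!\cdot w=W$; thus $W$ is irreducible as a weak $V^{\langle g\rangle}$--module. The orbifold bookkeeping of the first paragraph together with Step 1 are essentially formal; the real obstacle is Step 2 --- transporting the lowering operators $D_{a}$, which govern irreducibility of $W$ over $V$, into the orbifold algebra $\mathcal A_{0}$. This is exactly what the hypothesis, in the guise $\langle S\rangle=\Z/T\Z$, makes possible, and it is also what forces one to keep careful track of how the $\Z/T\Z$--grading of $\mathcal A$ interacts with the Whittaker filtration of $W$.
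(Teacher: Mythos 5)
Your proof takes a genuinely different route from the one used in \cite{ALPY} and generalized in the paper's Theorem \ref{main1}. The paper never opens up the internal Whittaker structure of $W$: it forms the direct sum $\mathcal{M}=\bigoplus_i g^i\circ W$, uses the distinctness of the Whittaker functions $\lambda^{(i)}$ \emph{only} to conclude that the summands are pairwise non-isomorphic, invokes Lemma \ref{ALPY-twisted} (a density/Schur type fact) to say that any vector of $\mathcal{M}$ with all components nonzero is cyclic, and then derives a contradiction by comparing the $\langle g\rangle$-isotypic decomposition $V=\bigoplus_\chi V_\chi$ with the grading of $\mathcal{M}$ via Lemmas \ref{actionP}--\ref{chi}. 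In contrast, you stay inside $W$, translating distinctness of the $\lambda^{(i)}$ into $\langle S\rangle=\Z/T\Z$ for $S=\{d:\lambda^{(0)}|_{\n_d}\neq 0\}$ (this reformulation is correct), and then try to pass between an arbitrary nonzero vector and the Whittaker vector $w$ using only operators of total $g$-degree $0$.

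Your Step 1 is essentially sound: the one non-trivial point, that $\mathcal{A}_0 w$ coincides with the weak $V^{\langle g\rangle}$-submodule generated by $w$, follows from the associativity relation underlying Lemma \ref{lem:02} by tracking the $\Z/T\Z$-degree (each $u_m v_n w$ is a finite combination of $(u_jv)_k w$, and $u_jv\in V^{\deg u+\deg v}$). The genuine gap is Step 2. You posit an exhausting filtration $\C w=F_0W\subseteq F_1W\subseteq\cdots$ on which every $a\in\n$ acts by $\lambda^{(0)}(a)\,\mathrm{id}$ plus a strictly filtration-lowering operator with symbol $D_a$, and assert that irreducibility of $W$ over $V$ forces the $D_a$ ($a\in\n$) to have no common zero in positive filtration degree. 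Neither the existence of such a filtration nor this no-common-zero property is part of the hypotheses of the theorem, and neither is proved: the theorem is stated for an arbitrary vertex superalgebra $V$ and an arbitrary irreducible weak module that happens to be of Whittaker type, not for a module with a prescribed PBW-type or loop-algebra filtration. Irreducibility over $V$ uses all modes of $V$, not just those coming from $\n$, so it does not by itself imply that the $\n$-symbols alone lower the filtration on every positive-degree vector. Even granting that, the claim that "varying the arguments one realizes every $D_a$ inside $\mathcal{A}_0$" is only a hand-wave: for $a\in\n_d$ with $d\in S$ (so $\lambda^{(0)}(a)\neq 0$) the one-step symbol of $aa'_1\cdots a'_r-(\text{scalar})$ is a combination of $D_a$ with the $D_{a'_i}$, and you do not explain how to isolate $D_a$. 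The paper's argument is precisely designed to avoid all of this: it needs nothing about $W$ beyond the fact that the conjugates $g^i\circ W$ are pairwise non-isomorphic, and for this the "Whittaker" hypothesis is used as a mere certificate of non-isomorphism, not as a source of filtration-lowering operators.
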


Then we  apply this theorem to Whittaker modules for Heisenberg  and Weyl vertex algebras, and obtain that they are irreducible modules for orbifold algebras. This result provides a new family of Whittaker modules for $W$--algebras.

Let us recall that in the case of Weyl vertex algebra, which we denote by $M$,
% {$M$\color{red} ??},
we construct a family of  irreducible Whittaker modules $M_1 (\bm{\lambda}, \bm{\mu})$ where  $ (\bm{\lambda}, \bm{\mu}) \in {\C} ^n \times {\C} ^n$ and  prove the following result.

%{\color{red}
\begin{thm}[\cite{ALPY}] Assume that $  (\bm{\lambda}, \bm{\mu})  \ne 0$.
% ( {\color{red} $\Lambda$s not needed later ?}).
 Then $M_1(\bm{\lambda}, \bm{\mu})  $ is an irreducible weak  module for the orbifold subalgebra $M^{\Z_p}$, for each $ p \ge 1$.
\end{thm}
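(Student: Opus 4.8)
The plan is to derive this as an application of Theorem~\ref{general-intr-whittake-old} with $V=M$, $W=M_1(\bm{\lambda},\bm{\mu})$, and $g$ the order-$p$ automorphism generating the acting copy of $\Z_p$. Irreducibility of $M_1(\bm{\lambda},\bm{\mu})$ as a weak $M$--module is part of the construction recalled above, so the task reduces to checking the two hypotheses of that theorem for this $g$: that every twist $M_i:=g^{i}\circ M_1(\bm{\lambda},\bm{\mu})$, $i=0,\dots,p-1$, is again a Whittaker module, and that the associated Whittaker functions $\lambda^{(0)},\dots,\lambda^{(p-1)}\colon\n\to\C$ are mutually distinct whenever $(\bm{\lambda},\bm{\mu})\neq 0$.

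First I would fix notation: writing the generating fields of the rank-$n$ Weyl vertex algebra as $a^{i}(z)$ and $a^{*i}(z)$, the orbifold $M^{\Z_p}$ is taken with respect to the order-$p$ automorphism $g$ that rescales the modes of the $a^{i}$ by a fixed primitive $p$--th root of unity $\eta$ and those of the $a^{*i}$ by $\eta^{-1}$, so that $M^{\Z_p}=M^{\langle g\rangle}$. The module $M_1(\bm{\lambda},\bm{\mu})$ is generated by a Whittaker vector $w$ on which the ``positive part'' $\n$ acts through the Whittaker function determined by taking $\lambda_i$ and $\mu_i$ as the eigenvalues of the relevant lowest positive modes of $a^{i}$ and $a^{*i}$ (all higher modes killing $w$).

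Next, the twists. By definition the underlying space of $g^{i}\circ M_1(\bm{\lambda},\bm{\mu})$ carries the action $v\mapsto g^{-i}(v)$ of $M$, so the same vector $w$ is a Whittaker vector there on which the relevant modes of $a^{j}$, resp.\ $a^{*j}$, now act by $\eta^{-i}\lambda_j$, resp.\ $\eta^{i}\mu_j$; comparing universal properties this gives an isomorphism $g^{i}\circ M_1(\bm{\lambda},\bm{\mu})\cong M_1(\eta^{-i}\bm{\lambda},\eta^{i}\bm{\mu})$. In particular each $M_i$ is of Whittaker type, with Whittaker function $\lambda^{(i)}$ encoded by the tuple $(\eta^{-i}\bm{\lambda},\eta^{i}\bm{\mu})$, and $\lambda^{(i)}=\lambda^{(i')}$ forces $\eta^{-i}\bm{\lambda}=\eta^{-i'}\bm{\lambda}$ and $\eta^{i}\bm{\mu}=\eta^{i'}\bm{\mu}$. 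If $(\bm{\lambda},\bm{\mu})\neq 0$ then some coordinate $\lambda_j$ or $\mu_j$ is nonzero, and then $\eta^{-i}\lambda_j$ (respectively $\eta^{i}\mu_j$) takes $p$ pairwise distinct values as $i$ ranges over $0,\dots,p-1$; hence the $\lambda^{(i)}$ are mutually distinct and Theorem~\ref{general-intr-whittake-old} yields that $M_1(\bm{\lambda},\bm{\mu})$ is irreducible as a weak $M^{\Z_p}$--module.

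The step that needs the most care is the identification $g^{i}\circ M_1(\bm{\lambda},\bm{\mu})\cong M_1(\eta^{-i}\bm{\lambda},\eta^{i}\bm{\mu})$: one must check that the rescaling of modes induced by the twist is realized by an honest $M$--module isomorphism, and that under the conventions used in Theorem~\ref{general-intr-whittake-old} the Whittaker function is supported exactly on the modes carrying the eigenvalues $\eta^{-i}\lambda_j$, $\eta^{i}\mu_j$. Once this bookkeeping is settled, the distinctness criterion and the invocation of the theorem are routine.
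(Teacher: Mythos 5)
Your proposal is correct and follows the same route the paper (and \cite{ALPY}) intends: identify the order-$p$ automorphism $g$ of the Weyl vertex algebra, compute $g^{i}\circ M_1(\bm{\lambda},\bm{\mu})\cong M_1(\eta^{-i}\bm{\lambda},\eta^{i}\bm{\mu})$, observe that $(\bm{\lambda},\bm{\mu})\neq 0$ forces these Whittaker functions to be pairwise distinct, and then invoke Theorem~\ref{general-intr-whittake-old}. The bookkeeping you flag (that the twist rescales the relevant positive modes and hence the Whittaker eigenvalues) is exactly the content that makes the hypotheses of that theorem hold.
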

Later it was proved by T. Creutzig and A. Linshaw \cite[Theorem 9.4]{CL-triality} that $M^{\Z_p}$ is isomorphic to   $W$-algebra $W_{k} (sl(p), f)$ of level $k =-p+ \frac{p+1}{p}$, corresponding to  the subregular nilpotent element $f$. So we constructed a new family of Whittaker modules for affine $W$--algebras.

 The question
then arises is whether Dong-Mason theorem
{
(or the extension in \cite{ALPY})
}
is true for

\begin{itemize}
\item[(1)] weak twisted modules of a vertex operator superalgebra;
\item[(2)] for  non-abelian orbifolds $V^{G}$, when $G$ is a non-abelian finite group;
\item[(3)] for  orbifolds  $V^{G}$, when $G$ is an infinite-group.
\end{itemize}
In this paper we give a positive answer to generalizations of types (1) and (2).  But we should
 point out that in \cite{AP-22} we proved that $M_1(\bm{\lambda}, \bm{\mu})  $ is reducible for an infinite orbifolds of $M$. So it seems that there are no natural generalizations to infinite groups.

 \subsection*{Main results of the paper}
In this paper, we assume that a vertex  superalgebra $V$ has a countable dimension. We prove
\begin{main}[cf. Theorem  \ref{main1}] Let $V$ be { vertex  superalgebra with a countable dimension}, $G\le \mathrm{Aut}(V)$ a finite group (not necessarily abelian) and $h\in Z(G)$, the center of the group $G$. Let $M$ be an irreducible weak $h$-twisted module of $V$ such that   $ M\not\cong  g\circ M$ for all $g\in G$. Then $M$ is also irreducible as a $V^G$--module.
\end{main}
In the case $g=h$ and $G=\left\langle g\right\rangle$, this theorem gives a twisted version of Theorem   \ref{general-intr-whittake-old}. Another interesting question that arises is the application to non-abelian orbifolds.  In Theorem \ref{slutnja-1}, we prove \cite[Conjecture  8.2]{ALPY}  of irreducibility of Whittaker modules for the permutation orbifolds $M(1)^{S_{\ell}}$  of the Heisenberg vertex algebra.   Since $M(1)^{S_{\ell}}$  is a finitely-generated $W$-algebra, in this way we also construct many examples of irreducible Whittaker modules for $W$-algebras.

\subsection*{Ordinary vs. Whittaker modules in Galois theory}

Let us now discuss similarities and differences between ordinary modules and Whittaker type  modules.

 \begin{itemize}

\item
The theorem of Dong-Mason has been generalized in a much more general setting to any finite group and ordinary modules in \cite{DLM,DY} and to ordinary twisted modules in \cite{MT}.  The proof depends heavily on the theory of Zhu algebras and their generalizations. That the graded subspace $M(n)$ is of finite dimension for each $n\in \Z_+$ is also important for their arguments. For modules of Whittaker type, there are no preferred $\Z_+$--grading and the original proof cannot be applied.
 It turns out that if $W\not\cong  g\circ W$ for all $g\in G$, then a simple  argument using the character  theory of finite groups is sufficient to show that $W$ is irreducible as $V^G$--module.  We also discuss some partial results for the case
that $W\cong  g\circ W$ for all $g\in G$ using some similar arguments.

\item Whittaker modules discussed in this paper and \cite{ALPY} provide a huge family of examples of modules $W$ with  $g \circ W  \not \cong W$, for each $g \in G$ and  $G < \mbox{Aut}(V)$.

\item   Assume first that $W$ is a weak $g$--twisted module for a vertex superalgebra $V$ where $g$ is an automorphism of finite order.
 If $V$ is a vertex operator algebra and $W$ is an ordinary or admissible $V$--module,   then $g \circ W \cong W$, and $W$ is a completely reducible module (cf. \cite{DLM3}).

% {\color{blue}  No all these cases $W$ is an irreducible module for the orbifold vertex superalgebra $V^{\langle g \rangle}$. ( These modules might be reducible for the orbifold vertex superalgebra $V^{\langle g \rangle}$.)}
\item {If $V$ is a vertex operator superalgebra with the canonical automorphism $\sigma$ of order two, then there exists some irreducible ordinary module} $W$ such that $ \sigma \circ W \not\cong W$. In particular, irreducible highest weight modules for Ramond $N=1$ algebra, viewed as Ramond twisted modules for $N=1$ Neveu Schwarz algebra, provide examples of such modules.

\item  The irreducibility result from our paper  can be applied  to simplify proofs of irreducibility of certain modules for orbifold vertex algebra; for example, to show directly that certain twisted modules for the triplet vertex algebra $W(p)$ are irreducible for $W(p) ^{D_{2n}}$--orbifold (cf. \cite{ALM}).
\end{itemize}

%Let us now discuss possible similarity and differences between results

\subsection*{Organization of the paper}
The paper is organized as follows. In Section 2, we recall basic notions and results in vertex  superalgebra. In Section 3, we study the cyclic vectors in a direct sum of irreducible weak twisted modules. In Section 4, we prove the main result of this paper which extends Dong-Mason's theorem on irreducibility of modules for orbifold vertex algebras to the category of Whittaker type for vertex superalgebras. Complete reducibility of certain $V^{\left\langle g\right\rangle }$--modules is studied in Section 5. We apply the main theorem of this paper (Theorem \ref{main1}) to examples in Sections 6, 7, 8 and 9. The irreducibility of twisted Whittaker modules for $N = 1$ Neveu-Schwarz
vertex superalgebras is studied in Section 6. The irreducibility of Whittaker modules for permutation orbifolds of Heisenberg vertex algebra and Virasoro vertex operator algebra is studied in Sections 7 and 8. In Section 9, we study irreducible twisted modules for certain orbifold of Heisenberg-Virasoro vertex operator algebra.

\subsection*{Acknowledgements}
%{\color{blue}
After we finished this work and posted the article on the arXiv, Dong, Ren and Yang posted another article on arXiv \cite{DRY} in which they determine the decomposition of any irreducible (untwisted) $V$--module into irreducible $\C[G]\otimes V^G$--modules for any finite group $G$.  In particular, they established a Schur-Weyl type duality and showed that $(\C[G], V^G)$ forms a dual pair on every irreducible $V$--module. They also proved some
similar results for irreducible $\sigma$--twisted modules if $\sigma$ is in the center of $G$. Later, the same result has been generalized to arbitrary $g$--twisted modules for $g\in G$ by K. Tanabe \cite{Tan}.
%}
%{\color{red}
Their results offer an alternative perspective on our Theorem \ref{main1}, yet they do not explore its applications or further generalizations. While some of their results are more general, our findings were presented earlier. In comparison, our paper offers motivations and examples, and discusses possible extensions of the results to infinite groups, which their work does not address.
%}

  D. Adamovi\' c and V. Pedi\' c Tomi\' c were  partially supported by the
Croatian Science Foundation under the project IP-2022-10-9006 and by the project "Implementation of cutting-edge research and its application as part of the Scientific Center of Excellence QuantiXLie", PK.1.1.02, European Union, European Regional Development Fund.

C. H.  Lam was partially supported by a research grant AS-IA-107-M02 of Academia Sinica and MoST
grants 110-2115-M-001-011-MY3 of Taiwan.
N. Yu was partly supported by the National Natural Science Foundation of China (Grant Nos. 12131018 and 12161141001).

\section{Basics}\label{sec:2}

In this section,  we review some basic notions of vertex  superalgebra
\cite{Bo,DL,FLM} and fix some necessary notations. Throughout this
paper, $z_{0}, z_{1},z_{2}$ are independent commuting formal variables.

\emph{A super vector space} is a $\mathbb{Z}_{2}$--graded vector space
$V=V_{\bar{0}}\oplus V_{\bar{1}}.$ The elements in $V_{\bar{0}}$ (resp.
$V_{\bar{1}}$) are called even (resp. odd). Let $\left|v\right|$
be 0 if $v\in V_{\bar{0}}$ and $1$ if $v\in V_{\bar{1}}.$

\begin{defn}A \emph{vertex superalgebra} is a quadruple $(V, {\bf 1},D,Y)$, where $V=V_{\overline{0}}\oplus V_{\overline{1}}$
is a $\mathbb{Z}_{2}$--graded vector space, $D$ is an endomorphism
of $V$, $1$ is a specified vector called the vacuum of $V$, and
$Y$ is a linear map
\begin{alignat*}{1}
Y\left(\cdot,z\right): & V\to\left(\mathrm{End}V\right)\left[\left[z,z^{-1}\right]\right]\\
 & v\mapsto Y\left(v,z\right)=\sum_{n\in\mathbb{Z}}v_{n}z^{-n-1}(\text{where\ }v_{n}\in\mathrm{End}V)
\end{alignat*}
 such that
\begin{itemize}
	\item[(V1)] For any $u, v\in V, u_{n}v=0$ for $n$ sufficiently large;

\item[(V2)] $\left[D, Y\left(v, z\right)\right]=Y\left(D\left(v\right), z\right)=\frac{d}{dz}Y\left(v, z\right)$
for any $v\in V;$

\item[(V3)] $Y\left({\bf 1}, z\right)=\mathrm{Id}_{V}$ (the identity operator of
$V$);
	\item[(V4)] $Y\left(v, z\right){\bf1}\in\left(\mathrm{End}V\right)\left[\left[z\right]\right]$
and $\lim_{z\to0}Y\left(v,z\right){\bf 1}=v$ for any $v\in V;$

\item[(V5)] For $\mathbb{Z}_{2}$-homogeneous elements $u, v\in V$, the following
Jacobi identity holds:
\begin{gather*}
z_{0}^{-1}\left(\frac{z_{1}-z_{2}}{z_{0}}\right)Y\left(u,z_{1}\right)Y\left(v,z_{2}\right)-\left(-1\right)^{\left|u\right|\left|v\right|}z_{0}^{-1}\delta\left(\frac{z_{2}-z_{1}}{-z_{0}}\right)Y\left(v,z_{2}\right)Y\left(u,z_{1}\right)\\
=z_{2}^{-1}\delta\left(\frac{z_{1}-z_{0}}{z_{2}}\right)Y\left(Y\left(u,z_{0}\right)v,z_{2}\right).
\end{gather*}
\end{itemize}

{
A vertex superalgebra $V$ is called \emph{a vertex operator superalgebra} if there is a distinguished vector $\omega$ of
$V$ such that
\begin{itemize}
\item[(V6)]  $\left[L\left(m\right),L\left(n\right)\right]=\left(m-n\right)L\left(m+n\right)+\frac{1}{12}\left(m^{3}-m\right)\delta_{m+n,}c$
	\ for $m,n\in\mathbb{Z},$ where $Y\left(\omega,z\right)=\sum_{n\in\mathbb{Z}}L\left(n\right)z^{-n-2}.$
	
\item[(V7)] $L\left(-1\right)=D,$ i.e., $\frac{d}{dz}Y\left(v,z\right)=Y\left(L\left(-1\right)v,z\right)$
	for $v\in V;$
	
\item[(V8)] $L(0)$ acts semisimply on $V$ with eigenvalues in $\frac{1}{2}\mathbb{Z}.$
	
\item[(V9)] $\dim V_k<\infty $ and $V_k=0$ for sufficiently small $k$, where $V_k= \{ v\in V\mid L(0) v=kv\}$ for $k\in \frac{1}{2}\mathbb{Z}$.  If $v\in V_k$, $k$ is called the weight (or conformal weight) of $v$ and denoted by $\mathrm{wt}v$.
\end{itemize}
}	
\end{defn}

%In the following, we will {\color{purple} assume a vertex superalgebra  has a countable dimension. Note that all vertex operator superalgebras have countable dimensions.}

\begin{defn}   An
\emph{automorphism} $g$ of
{
 a vertex algebra $V$ is a linear automorphism of $V$
 such that $gY\left(v,z\right)g^{-1}=Y\left(gv,z\right)$
for $v\in V$.

If $V$ is a vertex operator superalgebra, we also assume that an automorphism $g$  preserves the distinguished element $\omega$, i.e., $g\omega =\omega$.
}
\end{defn}

Let $V$ be a vertex operator superalgebra. There is a canonical order 2 linear automorphism $\sigma$ of $V$ associated with the structure of super vector space $V$ such that $\sigma|_{V_{\bar i}}=(-1)^i$ for $i=0, 1$. It is easy to see that $\sigma$ is an automorphism of vertex operator superalgebra $V$.

Let $g$ be an automorphism of $V$
with finite order $T$. Then $V$ is a direct sum of the
eigenspaces $V^{j}$ of $g$,
\[
V=\coprod_{j\in\mathbb{Z}/T\mathbb{Z}}V^{j},
\]
where $V^{j}=\left\{ v\in V\mid gv=\eta^{j}v\right\} $
for $\eta$ a fixed primitive $T$-th root of unity.

\begin{defn}  A \emph{weak g--twisted $V$--module
}is a %\st{$\mathbb Z_2$}-
vector space $M$
%=M^{\bar{0}}\oplus M^{\bar{1}}$
equipped
with a linear map
\begin{align*}
Y_{g}(\cdot,z):\  & V\to({\rm End}\;M)[[z^{1/T},z^{-1/T}]]\\
 & v\mapsto Y_{g}(v,z)=\sum_{n\in\frac{1}{T}\mathbb{Z}}v_{n}^{g}z^{-n-1},
\end{align*}
 %with  $v_{n}^{g}\in\left(\text{End}M\right)^{\left|v\right|}$
  such
that for $u,v\in V$ and $w\in M$ the following conditions hold:

(1) For any $v\in V,\ w\in M$, $v_{n}^{g}w=0$ for $n$ sufficiently
large;

(2) $Y_{g}({\bf 1},z)=1_{M}$ (the identity operator on $M$);

(3) For $u, v\in V$ of homogeneous sign, the following Jacobi identity
holds
\begin{gather*}
z_{0}^{-1}\left(\frac{z_{1}-z_{2}}{z_{0}}\right)Y_{g}(u,z_{1})Y_{g}(v,z_{2})-(-1)^{|u||v|}z_{0}^{-1}\delta\left(\frac{z_{2}-z_{1}}{-z_{0}}\right)Y_{g}(v,z_{2})Y_{g}(u,z_{1})\\
=\frac{z_{2}^{-1}}{T}\sum_{j\in\mathbb{Z}/T\mathbb{Z}}\delta\left(\eta^{j}\frac{\left(z_{1}-z_{0}\right)^{1/T}}{z_{2}^{1/T}}\right)Y_{g}(Y(g^{j}u,z_{0})v,z_{2}).
\end{gather*}
A  weak g--twisted $V$--module $M$  is called a {$\mathbb Z_2$}--graded  if $ M= M^{\bar{0}}\oplus M^{\bar{1}}$ and  $ v_ n ^{g} w \in M^{\bar{i} + \bar{j}}$ for
each $v \in V^{\bar i}, w \in M^{\bar{j}}$ and $n\in\frac{1}{T}\mathbb{Z}$.
\end{defn}
If $g=1$, then a weak $g$--twisted $V$--module is a weak $V$--module.

\begin{rem}
%{\color{blue}
We do not require that a weak g--twisted V--module be $\mathbb Z_2$--graded, primarily due to the examples that will be presented in  Section \ref{ns}.
%}
\end{rem}
\begin{defn}
Let $o(g\sigma)=T'$. An \emph{admissible} $g$--twisted $V$--module is a weak $g$--twisted $V$--module $M$ which carries a $\frac{1}{T'}\mathbb Z_+$-grading $M=\oplus_{n\in\frac{1}{T'}\mathbb Z_+}M(n)$ satisfying $v_mM(n)\subseteq M(n+\mathrm{wt}v-m-1)$ for homogeneous $v\in V$.

\end{defn}

One may assume $M\left(0\right)\not=0$. If $g=1$, then an\emph{
}admissible $g$--twisted $V$--module is called an admissible
$V$--module.

\begin{defn}
		An ordinary \emph{$g$--twisted $V$--module } is
a weak $g$--twisted $V$--module $M$ satisfying the condition that
$M=\oplus_{h\in\mathbb{C}}M_{(h)}$, where $M_{(h)}=\left\{ w\in M\mid L(0)^{g}w=hw\right\}$ for $L\left(0\right)^{g}=\omega_{1}^{g},$
$\dim M_{(h)}<\infty$ for all $h\in\mathbb{C}$, and $M_{(h+\frac{n}{T'})}=0$
for fixed $h\in\mathbb{C}$ and for all sufficiently small integers
$n$. \end{defn}

If $g=1$, then an ordinary $g$--twisted $V$--module is an ordinary
$V$--module.

\medskip
%We say that $V$ is $g$-rational if every admissible $g$--twisted $V$--module is a direct sum of simple admissible $g$--twisted modules. $V$ is called rational if it is $1$-rational.

Let $\text{Aut}\left(V\right)$ be the group of automorphisms of $V$. Now we consider the action of Aut$\left(V\right)$ on twisted modules.
Let $g, h\in\text{Aut}\left(V\right)$ with $g$ of finite order.
If $\left(M,Y_{M}\right)$
is a weak $g$--twisted $V$--module, there is a weak $hgh^{-1}$--twisted
$V$--module $\left(h\circ M,Y_{h\circ M}\right)$ where $h\circ M\cong M$
as a vector space and $Y_{h\circ M}\left(v,z\right)=Y_{M}\left(h^{-1}v,z\right)$
for $v\in V$. This defines a left action of $\text{Aut}\left(V\right)$
on weak twisted $V$--modules and on the isomorphism classes of weak twisted
$V$--modules. We write $$h\circ \left(M,Y_{M}\right)=\left(h\circ M,Y_{h\circ M}\right)=h\circ M.$$
We say $M$ is \emph{$h$--stable} if $M$ and $h\circ M$ are isomorphic.
Note that $g\circ M$ and $M$ are isomorphic if $M$ is an admissible $g$--twisted $V$--module \cite{DLM3}.

%{\dblue What means last sentence?}

%Denote by $\mathfrak{U}\left(g\right)$
%the equivalence classes of irreducible $g$--twisted $V$--modules. Assume that $g$ and %$h$ commute, then $h$ acts on $\mathfrak{U}\left(g\right)$. We denote
%\[
%\mathfrak{U}\left(g,h\right)=\left\{ M\in\mathfrak{U}\left(g\right)|M\circ h\cong %M\right\} .
%\]
}%
%Both $\mathfrak{U}\left(g\right)$ and $\mathfrak{U}\left(g,h\right)$
%are finite sets since $V$ is $g$-rational for all $g$.

\section{On cyclic vectors in a direct sum of irreducible weak $g$--twisted
modules}

Let {$V=(V, {\bf 1},D,Y)$ be  a vertex  superalgebra} and $g$ an automorphism
of  order $T$. Let $t$ be an indeterminate. Recall from \cite{Bo} that $\mathbb{C}\left[t^{\frac{1}{T}},t^{-\frac{1}{T}}\right]$
has the structure of vertex superalgebra with vertex operator
\[
Y\left(f\left(t\right),z\right)g\left(t\right)=f\left(t+z\right)g\left(t\right)=\left(e^{z\frac{d}{dt}}f\left(t\right)\right)g\left(t\right).
\]
Then the tensor product $\mathcal{\mathcal{L}}\left(V\right)=\mathbb{C}\left[t^{\frac{1}{T}},t^{-\frac{1}{T}}\right]\otimes V$ is a vertex superalgebra \cite{DL,FHL,L}
with vertex operator

\[
Y\left(f\left(t\right)\otimes v,z\right)\left(g\left(t\right)\otimes u\right)=f\left(t+z\right)g\left(t\right)\otimes Y\left(v,z\right)u.
\]
The special endomorphism  of $\mathcal{L}\left(V\right)$ is
given by $\mathcal{D}=\frac{d}{dt}\otimes1+1\otimes D$.
The
action of $g$ naturally extends to an automorphism of the tensor
product vertex superalgebra in the following way:
\[
g\left(t^{m}\otimes a\right)=\exp\left(\frac{-2\pi im}{T}\right)\left(t^{m}\otimes ga\right).
\]
Denote the space of $g$--invariant of this action by $\mathcal{L}\left(V,g\right),$ which
is a vertex subalgebra of $\mathcal{L}\left(V\right)$. It is clear
that
\[
\mathcal{L}\left(V,g\right)=\oplus_{r=0}^{T-1}t^{r/T}\mathbb{C}\left[t,t^{-1}\right]\otimes V^{r}.
\]

It follows
from \cite{Bo} that

\[
V\left[g\right]=\mathcal{L}\left(V,g\right)/\mathcal{D}\mathcal{L}\left(V,g\right)
\]
carries the structure of a Lie superalgebra with bracket
\[
\left[u+\mathcal{D}\mathcal{L}\left(V,g\right),v+\mathcal{D}\mathcal{L}\left(V,g\right)\right]=u_{0}v+\mathcal{D}\mathcal{L}\left(V,g\right).
\]

We will use $v\left(n\right)$ to denote the image of $t^{n}\otimes v\in\mathcal{L}\left(V,g\right)$
in $V\left[g\right].$ If $M$ is a weak $g$--twisted $V$--module, then $M$ becomes a $V\left[g\right]$--module
such that $a\left(m\right)$ acts as $a_{m}$.
{
When $V$ is a vertex operator superalgebra,}  $M$ is an
admissible $g$--twisted $V$--module if and only if $M$ is a $\frac{1}{T}\mathbb{Z}_{+}$--graded
module for the graded Lie superalgebra $V\left[g\right]$ \cite{DLM-2,DZ}.

\begin{lemma} \label{ALPY-twisted}
	Let $V$ be a vertex superalgebra with a countable dimension.	
Assume that $W_{i}$, $i=1,\dots,t$,
are non-isomorphic irreducible weak $g$--twisted $V$--modules and
$\overline{W}=\oplus_{i=1}^{t}W_{i}.$ Then for each $0\not=w_{i}\in W_{i},$
a vector of the form $\left(w_{1},w_{2},\dots,w_{t}\right)$ is cyclic
in $\overline{W}.$

\end{lemma}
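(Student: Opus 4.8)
First I would translate the statement into module-theoretic language. A subspace of a weak $g$--twisted $V$--module is a weak submodule exactly when it is stable under all the operators $v_n^g$ ($v\in V$, $n\in\frac1T\Z$), i.e.\ exactly when it is a submodule over the unital subalgebra $\mathcal A\subseteq\mathrm{End}_{\C}(\overline{W})$ generated by these operators. Since $\overline{W}=\oplus_i W_i$ carries the direct sum weak module structure, $\mathcal A$ preserves each $W_i$, each $W_i$ is a simple $\mathcal A$--module, and the $W_i$ are pairwise non-isomorphic as $\mathcal A$--modules because they are pairwise non-isomorphic as weak $g$--twisted $V$--modules. So the lemma is equivalent to the assertion that $\mathcal A\cdot(w_1,\dots,w_t)=\overline{W}$, that is, that for every $(u_1,\dots,u_t)\in\overline{W}$ there is $a\in\mathcal A$ with $a w_i=u_i$ for all $i$.

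The main step would be an application of the Jacobson density theorem. Because $V$ has countable dimension, $\mathcal A$ does too, so Schur's lemma in its Dixmier form yields $\mathrm{End}_{\mathcal A}(W_i)=\C$ for every $i$; since in addition the $W_i$ are pairwise non-isomorphic, the commutant $\mathcal B:=\mathrm{End}_{\mathcal A}(\overline{W})$ is $\C^{\,t}$ acting diagonally on the summands, and hence the bicommutant of $\overline{W}$ is $\mathrm{End}_{\mathcal B}(\overline{W})=\prod_{i=1}^t\mathrm{End}_{\C}(W_i)$. By the density theorem (applied to the semisimple $\mathcal A$--module $\overline{W}$), for the single vector $(w_1,\dots,w_t)$ and any $\varphi=(\varphi_1,\dots,\varphi_t)\in\prod_i\mathrm{End}_{\C}(W_i)$ there is $a\in\mathcal A$ with $a\cdot(w_1,\dots,w_t)=(\varphi_1(w_1),\dots,\varphi_t(w_t))$. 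Given the target $(u_1,\dots,u_t)$, I would use $w_i\neq0$ to pick $\varphi_i$ with $\varphi_i(w_i)=u_i$ (extend $w_i$ to a basis of $W_i$); the resulting $a$ does the job, so $\mathcal A\cdot(w_1,\dots,w_t)=\overline{W}$.

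The only genuine obstacle I expect is the bookkeeping around Schur's lemma: one must know that over a countable-dimensional $\C$--algebra a simple module has scalar endomorphism ring, and then identify the bicommutant correctly. This is precisely where the hypothesis that the $W_i$ are pairwise non-isomorphic is indispensable: if, say, $W_1\cong W_2$, then $\mathcal B$ would be larger than $\C^t$, the diagonal copy of $W_1$ inside $W_1\oplus W_2$ would be an $\mathcal A$--submodule, and a nonzero vector lying on it would fail to be cyclic.

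Should one wish to avoid the density theorem, the same result follows by induction on $t$. For $t=1$ a nonzero vector of an irreducible module is cyclic. For $t>1$, let $U$ be the submodule generated by $(w_1,\dots,w_t)$; projecting to $W_1\oplus\cdots\oplus W_{t-1}$ gives a submodule generated by $(w_1,\dots,w_{t-1})$, which by induction is all of $W_1\oplus\cdots\oplus W_{t-1}$, so $U+W_t=\overline{W}$. Now $U\cap W_t$ is a submodule of the irreducible $W_t$. If $U\cap W_t=W_t$ then $W_t\subseteq U$ and $U=U+W_t=\overline{W}$. If $U\cap W_t=0$ then $\overline{W}=U\oplus W_t$, so $U\cong\overline{W}/W_t\cong W_1\oplus\cdots\oplus W_{t-1}$; but $U$ also surjects onto $W_t$ (since its last coordinate $w_t\neq0$ and $W_t$ is irreducible), and restricting a surjection $W_1\oplus\cdots\oplus W_{t-1}\twoheadrightarrow W_t$ to a summand gives, by Schur's lemma, an isomorphism $W_i\cong W_t$ for some $i<t$ — contradicting the hypothesis. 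Hence $U=\overline{W}$.
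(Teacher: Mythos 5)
Your proof is correct and takes essentially the same route as the paper: reduce to modules over an associative algebra (the paper passes to $\mathcal{U}(V[g])$, you work with its image in $\mathrm{End}_{\C}(\overline{W})$; these are interchangeable here), then apply the Dixmier form of Schur's lemma together with the Jacobson density theorem — which is precisely the content of the result [ALPY, Lemma 3.1] that the paper's one-line proof cites. Your density-free inductive variant is a pleasant bonus, but the main argument coincides with the one the paper relies on.
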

\begin{proof}
It is clear that $W_{i}$ is an irreducible module for the associative
algebra $\mathcal{U}\left(V\left[g\right]\right).$
The result follows by \cite[Lemma 3.1]{ALPY}.
\end{proof}

\section{Main result}

  	Let $V$ be a vertex superalgebra with a countable dimension.
Now let $G< \mathrm{Aut}(V)$ be a finite subgroup (not necessarily abelian).   Assume that $h \in Z(G)$, where $Z(G)$ is the center of the group $G$.

%Recall the following results.
 %\begin{lemma} \cite[Lemma 3.3] {ALPY} \label{lem:01}
%Assume that $L_i,$  $i=1, \dots, t,$  are   non-isomorphic  irreducible $h$--twisted weak $V$--modules and  $ \mathcal L = \bigoplus_{i=1} ^t L_i$.  Then for each $w_i \ne 0$, $w_i \in L_i$,  a  vector of the form $(w_1, w_2, \dots, w_t )$ is cyclic in $\mathcal L$.
%\end{lemma}

%\medskip

\begin{lemma}[\cite{DM, L}] \label{lem:02}

	Let $V$ be a vertex superalgebra with a countable dimension
and $W$ a {weak} $h$--twisted $V$--module.  Let $S$ be a subset of $W$. Then the submodule  generated by $S$ is given by
$$
\langle S\rangle = \mbox{Span}_{\C} \{ u_n w \ \vert \ u \in V, w\in S , n\in \mathbb{Q} \}.
$$
\end{lemma}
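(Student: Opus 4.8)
Put $X:=\mathrm{Span}_{\C}\{u_n w\mid u\in V,\ w\in S,\ n\in\mathbb{Q}\}$; only the modes $n\in\frac1T\Z$ (with $T$ the order of $h$) can contribute a nonzero term, the rest acting as $0$. I would establish $\langle S\rangle=X$ by checking the three properties characterizing the smallest weak $h$-twisted submodule containing $S$. First, $S\subseteq X$: by condition (2) in the definition of a weak $h$-twisted module, $Y_h(\mathbf 1,z)=1_W$, so $\mathbf 1_{-1}=\mathrm{id}_W$ and $w=\mathbf 1_{-1}w\in X$ for every $w\in S$. Second, $X$ lies inside every submodule $N\subseteq W$ that contains $S$: a submodule is by definition a subspace stable under all operators $u_n$ ($u\in V$, $n\in\frac1T\Z$), so $u_nw\in N$ whenever $w\in S\subseteq N$, hence $X\subseteq N$. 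It therefore remains to prove that $X$ is itself a weak $h$-twisted submodule.

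\textbf{The submodule property.} This is the only point with content: we must show $v_mX\subseteq X$ for every $v\in V$ and every $m\in\frac1T\Z$. By $\C$-bilinearity it is enough to treat $v_m(u_nw)$ with $u,v\in V$ (which we may take $\Z_2$-homogeneous) and $w\in S$. Here I would invoke the super, twisted \emph{associativity} relation that follows from the Jacobi identity (condition (3)) in the definition of a weak $h$-twisted module: it lets one rewrite the composition $Y_h(v,z_1)Y_h(u,z_2)w$, in the appropriate formal-variable sense, in terms of the expressions $Y_h(Y(h^{j}v,z_0)u,z_2)w$, $j\in\Z/T\Z$; extracting the relevant coefficient then expresses $v_m(u_nw)$ as a \emph{finite} $\C$-linear combination of vectors $a_k w$, where each $a\in V$ is one of the products $(h^{j}v)_\ell u$ and $k\in\frac1T\Z$. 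Finiteness of this combination is exactly what condition (1) (the truncation of vertex operators) provides, since $(h^{j}v)_\ell u=0$ for $\ell$ sufficiently large. As each such $a_kw$ belongs to $X$, we conclude $v_m(u_nw)\in X$, and hence $v_mX\subseteq X$. Together with the two observations above, $X$ is the smallest submodule containing $S$, i.e.\ $\langle S\rangle=X$.

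\textbf{Main obstacle.} The entire difficulty is concentrated in the associativity step, and there the delicate part is the formal-calculus bookkeeping in the twisted and super setting: one has to expand the $\delta$-function combination $\frac{z_2^{-1}}{T}\sum_{j\in\Z/T\Z}\delta\!\big(\eta^{j}\frac{(z_1-z_0)^{1/T}}{z_2^{1/T}}\big)$ correctly, carry the fractional powers $z_1^{1/T},z_2^{1/T}$ and the sum over $T$-th roots of unity $\eta^{j}$ through the manipulation, and keep the sign $(-1)^{|u||v|}$ straight, all the while using condition (1) to guarantee that every intermediate sum is finite. This is precisely the computation performed in the untwisted case in \cite{DM} and in the twisted case in \cite{L}, so in the write-up I would reduce this step to those references rather than repeat it. (The countable-dimension assumption on $V$ is not needed for this lemma; it is carried along only for uniformity with the other results of the section.)
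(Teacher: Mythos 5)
Your proposal is correct and reconstructs precisely the argument that the paper delegates to \cite{DM, L}: set $X=\mathrm{Span}_{\C}\{u_nw\}$, note $S\subseteq X$ via $\mathbf 1_{-1}=\mathrm{id}_W$, note $X$ is contained in any submodule containing $S$, and then reduce the only substantive point (that $X$ is itself a submodule) to twisted weak associativity, which you correctly attribute to the references. Your remark that countable dimension plays no role here is also accurate.

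One step in your sketch is stated more optimistically than it works out, and it is worth knowing where the elision is (it is the ``Thus'' in the Dong--Mason proof as well). Weak associativity holds only after multiplying through by a factor $(z_0+z_2)^{\ell+r/T}$ with $\ell$ large enough that the extra product term in the Jacobi identity drops out after $\mathrm{Res}_{z_1}$. Because of that factor, a single coefficient extraction does not express $v_m(u_nw)$ as a combination of iterate modes $\bigl((h^{j}v)_\ell u\bigr)_k w$; rather it gives an identity of the shape
\[
v_m(u_n w)\;+\;\sum_{k\ge 1}c_k\, v_{m-k}\bigl(u_{n+k}w\bigr)\;=\;\sum_{j}d_j\,\bigl((h^{?}v)_{?}u\bigr)_{?}w,
\]
where the right-hand side is a finite sum of iterates applied to $w$ (hence already in $X$), and the extra left-hand terms are again products but with the second mode strictly larger than $n$. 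One then closes the argument by a descending induction on $n$, with base case $u_{n}w=0$ for $n$ sufficiently large. Relatedly, the finiteness you invoke uses both the vertex-algebra truncation $u_\ell v=0$ for $\ell\gg 0$ (axiom (V1)) and the module truncation $a_kw=0$ for $k\gg 0$ (module condition (1)), not condition (1) alone. None of this changes the conclusion, and deferring the full formal-calculus bookkeeping to \cite{DM, L} is reasonable, but the phrase ``extracting the relevant coefficient then expresses $v_m(u_nw)$ as a finite linear combination'' should be read as shorthand for this extraction-plus-induction rather than a one-shot identity.
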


Next we recall  a version of Schur's Lemma given in \cite[Section 4.1.2]{GW}.

\begin{lemma} Let $W_{1}$ and $W_{2}$ be irreducible modules
	for an associative algebra $A$. Assume that $W_{1}$ and $W_{2}$
	have countable dimensions over $\mathbb{C}.$ Then  $$\dim\mathrm{Hom}_{A}\left(W_{1},W_{2}\right) \le 1$$
	and $\dim\mathrm{Hom}\left(W_{1},W_{2}\right)=1$ if and only if $W_{1}\cong W_{2}.$
\end{lemma}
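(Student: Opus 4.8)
The plan is to deduce everything from the single fact that $\mathrm{End}_A(W_1)=\mathbb{C}\cdot\mathrm{id}$. I would begin with the formal observations that require no hypothesis on dimension: if $f\colon W_1\to W_2$ is a nonzero $A$-module map, then $\ker f$ and $\mathrm{im}\,f$ are $A$-submodules, so irreducibility forces $\ker f=0$ and $\mathrm{im}\,f=W_2$; thus $f$ is an isomorphism. Consequently $\mathrm{Hom}_A(W_1,W_2)\ne 0$ exactly when $W_1\cong W_2$, which already gives the "if and only if" part of the statement, and it also shows that $\mathrm{End}_A(W_1)$ is a division algebra over $\mathbb{C}$, in particular it has no zero divisors.

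The main step is to prove $\mathrm{End}_A(W_1)=\mathbb{C}\,\mathrm{id}$, and here the countable-dimension hypothesis together with $\mathbb{C}=\overline{\mathbb{C}}$ enters. Let $0\ne\phi\in\mathrm{End}_A(W_1)$ and suppose $\phi$ is \emph{not} algebraic over $\mathbb{C}$; equivalently $p(\phi)\ne 0$ for every nonzero $p\in\mathbb{C}[x]$, hence, being a nonzero endomorphism of the irreducible module $W_1$, each such $p(\phi)$ is invertible. Fix $0\ne w\in W_1$. I claim the family $\{(\phi-\lambda\,\mathrm{id})^{-1}w\colon\lambda\in\mathbb{C}\}$ is linearly independent. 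Indeed, a nontrivial relation $\sum_{i=1}^{n}c_i(\phi-\lambda_i\,\mathrm{id})^{-1}w=0$ with the $\lambda_i$ distinct and all $c_i\ne 0$, multiplied through by the invertible commuting operator $\prod_{j=1}^{n}(\phi-\lambda_j\,\mathrm{id})$, yields $p(\phi)w=0$ with $p(x)=\sum_{i=1}^{n}c_i\prod_{j\ne i}(x-\lambda_j)$; but $p(\lambda_k)=c_k\prod_{j\ne k}(\lambda_k-\lambda_j)\ne 0$, so $p\ne 0$, whence $p(\phi)$ is invertible and $w=0$, a contradiction. Since $\mathbb{C}$ is uncountable, this contradicts $\dim_{\mathbb{C}}W_1\le\aleph_0$. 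Therefore $\phi$ is algebraic over $\mathbb{C}$: there is a nonzero $p$ with $p(\phi)=0$; factoring $p(x)=c\prod_{i}(x-\mu_i)$ over $\mathbb{C}$ and using that $\mathrm{End}_A(W_1)$ has no zero divisors, some $\phi-\mu_i\,\mathrm{id}=0$, i.e. $\phi\in\mathbb{C}\,\mathrm{id}$.

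Finally, for $\dim\mathrm{Hom}_A(W_1,W_2)\le 1$: if the space is zero there is nothing to prove; otherwise fix a nonzero $g\in\mathrm{Hom}_A(W_1,W_2)$, which by the first paragraph is an isomorphism. For any $f\in\mathrm{Hom}_A(W_1,W_2)$ we have $g^{-1}\circ f\in\mathrm{End}_A(W_1)=\mathbb{C}\,\mathrm{id}$, so $f\in\mathbb{C}g$; hence $\mathrm{Hom}_A(W_1,W_2)=\mathbb{C}g$ is one-dimensional, and this happens exactly in the case $W_1\cong W_2$ already isolated above. The only point requiring genuine care is the linear independence of the vectors $(\phi-\lambda\,\mathrm{id})^{-1}w$, i.e. the operator-theoretic "partial fractions" argument, where one must check that all operators in play commute so that clearing denominators is legitimate; everything else is purely formal, and the statement is in any case classical, as recorded in \cite[Section 4.1.2]{GW}.
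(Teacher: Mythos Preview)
The paper does not actually prove this lemma; it simply recalls it from \cite[Section 4.1.2]{GW} without argument. Your proof is correct and is precisely the standard Dixmier-type argument one finds in that reference: the key step---showing that any $\phi\in\mathrm{End}_A(W_1)$ must be algebraic over $\mathbb{C}$ because otherwise the uncountable family $\{(\phi-\lambda\,\mathrm{id})^{-1}w\}_{\lambda\in\mathbb{C}}$ would be linearly independent inside a countable-dimensional space---is exactly the classical proof, and your reduction of the $\mathrm{Hom}$ statement to the $\mathrm{End}$ statement is the routine one. There is nothing to compare, and nothing to correct.
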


\begin{lemma} \label{countable dimensional}Assume that $V$ is
a vertex superalgebra with a countable dimension. Let $g\in\mathrm{Aut}\left(V\right)$ with finite order. Suppose $W$ is an irreducible weak $g$--twisted $V$--module.
	Then $W$ has a countable dimension.
\end{lemma}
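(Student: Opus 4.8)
The plan is to show that an irreducible weak $g$--twisted $V$--module $W$ is spanned by a countable set, which gives the claim since $V$ has countable dimension. First I would fix any nonzero vector $w\in W$. By irreducibility, $W=\langle w\rangle$, and by Lemma~\ref{lem:02} we have
$$
W=\mathrm{Span}_{\C}\{u_n w \ \vert\ u\in V,\ n\in\mathbb{Q}\}.
$$
This already exhibits $W$ as the linear span of a set indexed by $V\times\mathbb{Q}$. Since a countable-dimensional vector space is in particular a countable union of finite-dimensional subspaces, the set $\{u_n w\}$, as $u$ ranges over a countable spanning set of $V$ and $n$ over $\mathbb{Q}$, is countable; hence $W$ is spanned by a countable set and therefore has countable dimension.

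To make the ``countable spanning set of $V$'' step precise, I would pick a countable basis (or merely a countable spanning set) $\{v^{(k)}\}_{k\in\N}$ of $V$, which exists exactly because $\dim V$ is countable. For each $k$ and each $n\in\mathbb{Q}$, the operator $(v^{(k)})_n$ (in the twisted sense, so really $n\in\frac1T\Z$, a countable set) produces one vector $(v^{(k)})_n w$. By linearity of $u\mapsto Y_g(u,z)$, every $u_n w$ with $u\in V$ lies in the span of these, so
$$
W=\mathrm{Span}_{\C}\bigl\{(v^{(k)})_n w \ \big\vert\ k\in\N,\ n\in\tfrac1T\Z\bigr\}.
$$
The indexing set $\N\times\frac1T\Z$ is countable, so $W$ is a quotient of a countable-dimensional free module and thus has countable dimension.

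There is essentially no obstacle here: the only thing to be slightly careful about is the truncation axiom, which guarantees $u_n w = 0$ for $n$ large, so the sums involved are well defined, but this is not needed for the dimension count — we only need an upper bound on the size of a spanning set. The lemma is really just bookkeeping: irreducibility plus a generating-set description (Lemma~\ref{lem:02}) plus countability of $V$. I would keep the proof to two or three sentences.

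\begin{proof}
Fix $0\neq w\in W$. Since $W$ is irreducible, $W=\langle w\rangle$, so by Lemma~\ref{lem:02},
$$
W=\mathrm{Span}_{\C}\{u_n w \ \vert\ u\in V,\ n\in\mathbb{Q}\}.
$$
Let $\{v^{(k)}\}_{k\in\N}$ be a countable spanning set of $V$, which exists because $\dim V$ is countable. By linearity of $Y_g$ in the first argument, every $u_n w$ is a linear combination of the vectors $(v^{(k)})_n w$, and the exponents that actually occur lie in $\tfrac1T\Z$. Hence
$$
W=\mathrm{Span}_{\C}\bigl\{(v^{(k)})_n w \ \big\vert\ k\in\N,\ n\in\tfrac1T\Z\bigr\}.
$$
The indexing set $\N\times\tfrac1T\Z$ is countable, so $W$ is spanned by a countable set and therefore has a countable dimension.
\end{proof}
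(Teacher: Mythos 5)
Your proof is correct and takes essentially the same route as the paper: both arguments fix a nonzero $w\in W$, use irreducibility together with Lemma~\ref{lem:02} (the paper cites the analogue from \cite{DM}) to write $W=\mathrm{Span}_{\C}\{v_n w \mid v\in V,\ n\in\tfrac1T\Z\}$, and then conclude countable dimension from the countability of $V$ and of $\tfrac1T\Z$. Your version just makes the final counting step slightly more explicit.
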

\begin{proof}
	By our assumption,
	 $V$ has a countable dimension.
	By arguments similar to \cite[ Proposition 4.1]{DM},
	we have
	\[
	W=V.w=\text{Span}_{\mathbb{C}}\left\{ v_{n}w\mid v\in V,n\in\frac{1}{T}\mathbb{Z}\right\}\quad \text{ for any } 0\neq w\in W,
	\]
	which implies that $W$ also has a countable dimension.
\end{proof}

Therefore, Schur's lemma also applies to irreducible weak $g$--twisted modules if
$V$ has a countable dimension. From now on, we will assume that $V$ has a countable dimension. Note that this condition is automatically satisfied if $V$ is a vertex operator superalgebra.

\begin{defn}
Let $M$ be a weak $h$--twisted $V$--module. We define the $g$--conjugate  $h$--twisted module $\left( g \circ M , Y_{g \circ M}(\ ,z )\right)$ as follows:
$g \circ M=M$ as  a vector space and
\[
Y_{g \circ M}(u, z) = Y(g^{-1} u,z)\quad \text{ for any } u\in V.
\]
\end{defn}
Note that $  g_1 \circ (g_2\circ M) = (g_1 g_2) \circ M $ for any $g_1, g_2\in \mathrm{Aut}(V)$.
\medskip

Let $G$ be a finite subgroup of $\mathrm{Aut}(V)$. Let $h \in G$ and let $M=(M, Y_M)$ be an irreducible  weak $h$--twisted modules of $V$. Define a subgroup
\[
G_M= \{ g\in G\mid g \circ M \cong M\}.
\]
Note that $G_M$ is a subgroup of $C_G(h)$.

By Schur's lemma and similar arguments as in \cite{DY}, $G_M$ acts on $M$ projectively, that means, there is a projective representation
$h\to \phi(h)$ of $G_M$ on $M$ such that
\[
\phi(h)^{-1}Y_M(v, z)\phi(h) = Y_M(h^{-1}v,z) \quad \text{ for } h\in G_M, v\in V.
\]
Let $\alpha_M$ be the corresponding 2-cocycle in $C^2(G_M, C^*)$. Then $\phi(h)\phi(k) =\alpha_M(h, k)\phi(hk)$ for all $h,k\in G_M$. Without loss, we may assume that $\alpha_M(h,k)$ is a root of unity for any $h,k\in G$.  Let $n$ be the smallest positive integer such that $\alpha_M(h,k)^n=1$ for all $h,k\in G$.  Then we have a  central extension
\[
1\to \langle \xi_n\rangle \to \hat{G}_M\to G_M\to 1
\]
of $G_M$ associated with $\alpha_M$,  where $\xi_n$ is a primitive $n$--th root of unity.

Let $\mathbb{C}^{\alpha_M}[G_M]$ be the corresponding twisted group algebra. It is well known that $\mathbb{C}^{\alpha_M}[G_M]$ is a semisimple associative algebra. It follows that $M$ is a $\mathbb{C}^{\alpha_M}[G_M]$--module.

In this article, we will consider two cases: $G_M=1$ and $G=G_M$. {First we consider the case when $G_M=1$.}

\begin{thm} \label{main1}	
		Let $V$ be a vertex superalgebra with a countable dimension.
	Let $G$ be a finite subgroup (not necessarily abelian) of $\mathrm{Aut}(V)$. Assume that $h\in Z(G)$ where $Z(G)$ is the center of $G$. Let $M$ be an irreducible {weak} $h$--twisted  module of $V$. Suppose that $  M\ncong  g \circ M$ for all $g \in G\setminus{\{1\}}$.
Then $M$ is also irreducible as  $V^G$--module.
\end{thm}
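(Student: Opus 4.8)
The plan is to follow the strategy outlined in the introduction: since $M \ncong g\circ M$ for all $g\in G\setminus\{1\}$, we are in the case $G_M = 1$, so the projective representation business is trivial and the argument reduces to a character-theoretic / direct-sum computation built on Lemma \ref{ALPY-twisted} together with the twisted-induction formalism. First I would form the induced object $\overline{M} = \bigoplus_{g \in G} g\circ M$, where the sum runs over a set of representatives (here all of $G$ since the $g\circ M$ are pairwise non-isomorphic $h$-twisted modules — this uses $h\in Z(G)$ so that $ghg^{-1}=h$ and each $g\circ M$ is again $h$-twisted). The group $G$ acts on $\overline{M}$: an element $k\in G$ sends the summand $g\circ M$ to $(kg)\circ M$ by the identity $k\circ(g\circ M) = (kg)\circ M$ recorded just before Theorem \ref{main1}, compatibly with the $V$-action in the sense that $k$ intertwines $Y_{g\circ M}(v,z)$ with $Y_{(kg)\circ M}(kv,z)$. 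Thus $\overline{M}$ is a module for the smash product $V[h] \rtimes G$ (equivalently, the $V^G$-action commutes with the $G$-action on $\overline{M}$), and the $G$-fixed points of this action, or rather the isotypic decomposition of $\overline{M}$ under $G$, is where the character theory enters.

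The key steps, in order: (1) Verify that $\overline M = \bigoplus_{g\in G} g\circ M$ is an irreducible module over the associative algebra $\mathcal U(V[h])\rtimes \mathbb C[G]$ — or more precisely, pick $0\neq w\in M$ and use Lemma \ref{ALPY-twisted} to conclude that the diagonal-type vector $(w,w,\dots,w)\in \overline M$ (with an entry in each summand) is cyclic for $V[h]$ alone, hence a fortiori the whole of $\overline M$ is generated by any single vector under the larger algebra. (2) Decompose $\overline M$ as a $G$-module; since $G$ permutes the $|G|$ summands simply transitively (regular action), $\overline M \cong \mathbb C[G]\otimes M$ as a $\mathbb C[G]$-module with $G$ acting by left translation on the first factor, so $\overline M \cong \bigoplus_{\chi\in \mathrm{Irr}(G)} (\dim\chi)\, U_\chi\otimes (\text{something})$; the multiplicity spaces $M^\chi = \mathrm{Hom}_G(U_\chi,\overline M)$ are each nonzero. (3) Observe that each $M^\chi$ is a $V^G$-submodule of $\overline M$ (the $V^G$- and $G$-actions commute), and that the $M^\chi$ together with their $\mathbb C[G]$-structure reconstruct $\overline M$; by a double-commutant / Schur-type argument — using that $\mathbb C[G]$ is semisimple and $\overline M$ is irreducible over $\mathcal U(V[h])\rtimes\mathbb C[G]$ — each $M^\chi$ is an irreducible $V^G$-module. (4) Finally identify $M$ itself with one of these isotypic pieces: restricting the trivial-character projector $e = \frac{1}{|G|}\sum_{g\in G} g$ to $\overline M$, or rather projecting $\overline M$ onto a single summand compatibly with $V^G$, shows $M$ (as $V^G$-module, i.e. $M$ with $Y_M(v,z)$ for $v\in V^G$) is isomorphic to one of the irreducible $V^G$-modules $M^\chi$ — concretely, the composite $M \hookrightarrow \overline M \twoheadrightarrow M^{\mathrm{triv}}$ (averaging over $G$) is a nonzero $V^G$-map, and since $M^{\mathrm{triv}}$ is irreducible over $V^G$ it suffices to check this map is injective on $M$, which follows because $M$ is irreducible over $V\supseteq V^G$... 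I should be careful here: the cleaner route is to note $\mathrm{Hom}_{V^G}(M,\overline M)\cong \mathrm{Hom}_{V^G}(M,M)\otimes$(regular rep data) and extract irreducibility of $M|_{V^G}$ directly from irreducibility of $\overline M$ over the smash product via Clifford theory for the (now trivial, since $G_M=1$) stabilizer.

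The main obstacle I anticipate is Step (3)–(4): making the double-commutant argument rigorous in the \emph{weak-module, infinite-dimensional} setting where one cannot invoke finiteness of graded pieces or Zhu-algebra machinery. Everything must be done at the level of the associative algebra $\mathcal U(V[h])$ acting on countable-dimensional spaces, using the version of Schur's lemma quoted in the excerpt (so $\dim\mathrm{Hom} \le 1$ between irreducibles) rather than classical finite-dimensional representation theory. The delicate point is that $\overline M$ being cyclic/irreducible over the smash product does not immediately hand us irreducibility of the fixed-point submodules without knowing that $\mathcal U(V[h])\rtimes\mathbb C[G]$-submodules of $\overline M$ are controlled by $\mathbb C[G]$-subrepresentations — this is exactly where one needs the hypothesis $G_M=1$ (no isotropy, so Clifford theory degenerates) and the semisimplicity of $\mathbb C[G]$, combined with Lemma \ref{ALPY-twisted} to guarantee no "accidental" $V^G$-submodules arise. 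I would isolate this as the crux lemma: a $V^G$-submodule $N\subseteq \overline M$ that is stable is automatically a sum of $G$-isotypic components, because $e_\chi N \subseteq N$ for the central idempotents $e_\chi\in\mathbb C[G]$ and $N = \bigoplus_\chi e_\chi N$, reducing irreducibility of $M^\chi$ to the statement that $\overline M$ has no proper nonzero $(\mathcal U(V[h])\rtimes\mathbb C[G])$-submodule — which is precisely Lemma \ref{ALPY-twisted} once we check the cyclic vector there can be taken $G$-invariant-up-to-the-regular-action, i.e. of the form $(w,\dots,w)$.
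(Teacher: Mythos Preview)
Your scaffolding matches the paper's --- form $\overline M=\bigoplus_{g\in G}g\circ M$, let $G$ permute the summands, identify $M$ with $\mathcal M_1:=\overline M^{\,G}$ via the diagonal $w\mapsto(w,\dots,w)$, and invoke Lemma \ref{ALPY-twisted} on the diagonal vector --- but Steps (3)--(4) contain a genuine gap that your ``crux lemma'' does not close. A side issue first: the claim that \emph{every} multiplicity space $M^\chi=\mathrm{Hom}_G(U_\chi,\overline M)$ is an irreducible $V^G$-module is false when $\dim\chi>1$. Since $\overline M\cong M\otimes\mathbb C[G]$ with $V^G$ acting only on the factor $M$ and $G$ acting by the left regular representation on $\mathbb C[G]$, one gets $M^\chi\cong M\otimes\mathrm{Hom}_G(U_\chi,\mathbb C[G])\cong M^{\dim\chi}$ as a $V^G$-module. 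This does not affect the final goal (only $\chi=1$ is needed), but it shows the double-commutant heuristic is misleading in this situation.

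The real problem is this: irreducibility of $\overline M$ over the smash product $\mathcal U(V[h])\rtimes\mathbb C[G]$ does \emph{not} by itself force $\mathcal M_1$ to be irreducible over $V^G$. If $0\neq N\subsetneq\mathcal M_1$ is a proper $V^G$-submodule, then $N$ is trivially $G$-stable (it lies in the fixed-point space), so your crux lemma tells you nothing new; and $N$ is certainly \emph{not} stable under the full smash product, since $V$ does not preserve it. What you actually need is that the $V$-submodule $V\cdot N$ generated by $N$ is still \emph{proper} in $\overline M$, and nothing in your outline establishes that. The paper supplies the missing ingredient via the elementary equivariance computation in Lemmas \ref{actionP}--\ref{chi}: because $k\cdot(u_m w)=(ku)_m(k\cdot w)$ on $\overline M$, the action map $V\otimes\mathcal M_1\to\overline M$ is $G$-equivariant, whence $V_\chi\cdot\mathcal M_1\subset\mathcal M_\chi$ for every $\chi\in\mathrm{Irr}(G)$. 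Then $V\cdot N=\bigoplus_\chi V_\chi\cdot N$ has trivial-isotypic part $V^G\cdot N\subset N\subsetneq\mathcal M_1$, so $V\cdot N\subsetneq\overline M$; but for any $0\neq n\in N$ Lemma \ref{ALPY-twisted} gives $V\cdot n=\overline M$, a contradiction. That one-line equivariance observation is precisely the idea you are missing; once you have it, the smash-product and double-commutant formalism become unnecessary and the argument collapses to the short proof in the paper.
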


Before we give the proof of the above theorem, we   need to introduce some notions and prove several lemmas.
First we consider the  $h$--twisted  $V$--module
\[
\mathcal{M} = \oplus_{g\in G}  g \circ M.
\]
By our assumption, $\mathcal{M}$ is a direct sum of $|G|$ inequivalent irreducible  $h$--twisted  $V$--modules and $G$ acts naturally (from the left)
on $\mathcal{M}$. Moreover, each $g \circ M$ is isomorphic to $M$ as a $V^G$--module.
In other words,  $\mathcal{M}= M\otimes \C[G]$ as a module of $V^G\times \C[G]$.

For each $g\in G$, let $i_{g}: M \to g \circ M \to \mathcal{M}$ be the natural inclusion.

\begin{defn}
For each $\chi\in \text{Irr}(G)$, define $P_{\chi}\in \C[G]$ by
\[
P_{\chi}= \frac{\chi(1)}{|G|} \sum_{g\in G} \chi(g^{-1}) g.
\]
Denote $P_{\chi}^{V}$ and $P_{\chi}^{\mathcal{M}}$ to be the corresponding actions of $P_{\chi}$ on $V$ and $\mathcal{M}$, respectively.
\end{defn}
Recall that $V= \oplus_{\chi\in \text{Irr}(G)}  V_\chi$ \cite[Corollary 2.5]{DLM}, where $V_\chi$ is the sum of all irreducible
$G$--modules affording the character $\chi$.
%{\color{blue}
Recall that $P_{\chi}^V(V)$ (resp., $P_{\chi}^{\mathcal{M}}(\mathcal{M})$) is exactly the sum of all irreducible
$G$--modules in $V$ (resp., in $\mathcal{M}$) affording the character $\chi$. Thus, we have $V_\chi= P_{\chi}^V(V)$ and
\[
V=\oplus_{\chi\in \text{Irr}(G)} V_\chi= \oplus_{\chi\in \text{Irr}(G)}  P_{\chi}^{V} (V).
\]
Moreover,
\[
\mathcal{M} = \oplus_{\chi\in \text{Irr}(G)} P_{\chi}^{\mathcal{M}} (\mathcal{M})= \oplus_{\chi\in \text{Irr}(G)} \mathcal{M}_\chi ,
\]
where $\mathcal{M}_\chi= P_{\chi}^{\mathcal{M}} (\mathcal{M})$.
%}
Now we have the following result.

\begin{lemma}\label{actionP}
For each character $\chi\in \text{Irr}(G)$ and $u\in V$, $w\in \mathcal{M}$, $m\in \Z$, we have
\[
(P^V_{\chi}(u))_m (P_{1}^{\mathcal{M}}(w)) =  P_{\chi}^{\mathcal{M}} \left(u_m (P_{1}^{\mathcal{M}}(w))\right).
\]
\end{lemma}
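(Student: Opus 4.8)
The plan is to deduce the identity from a single structural fact: the natural left $G$-action on $\mathcal{M}=\oplus_{g\in G}g\circ M$ is compatible with the $h$-twisted $V$-module structure in the sense that
\[
g\cdot(u_m x)=(gu)_m\,(g\cdot x)\qquad\text{for all }g\in G,\ u\in V,\ x\in\mathcal{M}.
\]
(Recall $h\in Z(G)$, so each $g\circ M$ is again $h$-twisted and $\mathcal{M}$ is a genuine $h$-twisted $V$-module, with all summands sharing the same mode indexing.) First I would verify this equivariance directly from the definitions. It suffices to check it on an element $x=i_a(\xi)$ supported in a single summand $a\circ M$, $\xi\in M$. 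On that summand the mode $u_m$ acts as $(a^{-1}u)_m^M$, so $g\cdot(u_m x)=g\cdot i_a\bigl((a^{-1}u)_m^M\xi\bigr)=i_{ga}\bigl((a^{-1}u)_m^M\xi\bigr)$; on the other hand $g\cdot x=i_{ga}(\xi)$, and $(gu)_m$ acts on the summand $(ga)\circ M$ as $((ga)^{-1}gu)_m^M=(a^{-1}u)_m^M$, giving $(gu)_m(g\cdot x)=i_{ga}\bigl((a^{-1}u)_m^M\xi\bigr)$ as well. The two sides agree, and linearity finishes the general case. (For $u\in V^G$ this is exactly the statement that $\mathcal{M}$ is a module for $V^G\times\C[G]$; the content here is the version for arbitrary $u\in V$.) Keeping careful track of which copy of $M$ one sits in, and of how the conjugation twists the modes, is the only genuinely delicate point — everything after it is purely formal.

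Next I would record that $P_1^{\mathcal{M}}(w)$ is $G$-fixed. Since $\chi=1$ here, $P_1=\frac1{|G|}\sum_{k\in G}k$, and left multiplication by any $g\in G$ permutes $G$, so $g\cdot P_1^{\mathcal{M}}(w)=P_1^{\mathcal{M}}(w)$ for every $g\in G$.

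Finally I would just compute. Expanding $P^V_\chi(u)=\frac{\chi(1)}{|G|}\sum_{g\in G}\chi(g^{-1})\,gu$ and using linearity of the mode $(-)_m$,
\[
\bigl(P^V_\chi(u)\bigr)_m\bigl(P_1^{\mathcal{M}}(w)\bigr)=\frac{\chi(1)}{|G|}\sum_{g\in G}\chi(g^{-1})\,(gu)_m\bigl(P_1^{\mathcal{M}}(w)\bigr).
\]
By the previous paragraph $P_1^{\mathcal{M}}(w)=g\cdot P_1^{\mathcal{M}}(w)$, so the equivariance identity yields $(gu)_m\bigl(P_1^{\mathcal{M}}(w)\bigr)=(gu)_m\bigl(g\cdot P_1^{\mathcal{M}}(w)\bigr)=g\cdot\bigl(u_m\,P_1^{\mathcal{M}}(w)\bigr)$, and substituting back,
\[
\bigl(P^V_\chi(u)\bigr)_m\bigl(P_1^{\mathcal{M}}(w)\bigr)=\frac{\chi(1)}{|G|}\sum_{g\in G}\chi(g^{-1})\,g\cdot\bigl(u_m\,P_1^{\mathcal{M}}(w)\bigr)=P^{\mathcal{M}}_\chi\bigl(u_m\,(P_1^{\mathcal{M}}(w))\bigr),
\]
which is the claim. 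The same computation is insensitive to the mode indexing, so it goes through verbatim for modes in $\frac1T\Z$, and in particular for $m\in\Z$.
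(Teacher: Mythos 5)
Your proof is correct, and it is organized differently from the paper's. The paper proves the lemma by a direct double-sum manipulation: it computes $\left(P^V_{\chi}(u)\right)_m i_{a}(w)$ and $P_{\chi}^{\mathcal{M}}\left(u_m i_{h}(w)\right)$ separately on the summands of $\mathcal{M}$, performs a change of summation variable ($h^{-1}=a^{-1}g$), and then sums both identities over $G$ to match the two sides. Your argument instead isolates two reusable structural facts — (i) the $G$-action on $\mathcal{M}$ is equivariant for the twisted module structure, $g\cdot(u_m x)=(gu)_m(g\cdot x)$, and (ii) $P_1^{\mathcal{M}}(w)$ lies in the $G$-fixed subspace — and then derives the identity in one line. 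The equivariance check you carry out on a single summand $i_a(\xi)$ is really the same index bookkeeping the paper does inside its double sum, so the computational content is comparable, but your factoring through $G$-invariance of $P_1^{\mathcal{M}}(w)$ avoids the reindexing and makes the combining step purely formal. A small side benefit of your phrasing is that it makes transparent why the argument is insensitive to whether the mode index $m$ lies in $\Z$ or $\frac{1}{T}\Z$, which is relevant since $\mathcal{M}$ is an $h$-twisted module; the paper leaves that implicit.
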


\begin{proof}
For $a\in G$,  we have
\[
\begin{split}
\left(P^V_{\chi}(u)\right)_m i_{a}(w) = &\frac{\chi(1)}{|G|} \sum_{g\in G} \chi(g^{-1}) (gu)_m i_{a}(w)\\
=& \frac{\chi(1)}{|G|} \sum_{g\in G} \chi(g^{-1}) i_{a} ((a^{-1}gu)_m w).
\end{split}
\]
Let $h^{-1}=a^{-1}g$. Then $g^{-1}= h a^{-1}$ and we have
\[
\left(P^V_{\chi}(u)\right)_m i_{a}(w) = \frac{\chi(1)}{|G|} \sum_{h\in G} \chi(h a^{-1}) i_{a} \left( (h^{-1}u)_m w\right).
\]
Moreover, we have
\[
\begin{split}
P_{\chi}^{\mathcal{M}} \left(u_m i_{h}(w)\right)  = &\frac{\chi(1)}{|G|} \sum_{g\in G} \chi(g^{-1})g\left( i_{h}\left((h^{-1}u)_m w\right)\right) \\
=& \frac{\chi(1)}{|G|} \sum_{g\in G} \chi(g^{-1}) i_{gh} \left((h^{-1}u)_m w\right)\\
= &\frac{\chi(1)}{|G|} \sum_{a\in G} \chi(h a^{-1}) i_{a} \left( (h^{-1} u)_m w\right).
\end{split}
\]
Therefore,
\[ \
\begin{split}
\left(P^V_{\chi}(u)\right)_m \left(P_{1}^{\mathcal{M}}(w)\right) & = { \frac{1}{|G|}}\sum_{a\in G} \left(P^V_{\chi}(u)\right)_m i_{a}(w) \\
&= \frac{\chi(1)} {|G|^2} \sum_{a\in G} \sum_{h\in G} \chi(ha^{-1}) i_{a} ( (h^{-1}u)_m w)\\
&= {  \frac{1}{|G|}} \sum_{h\in G} P_{\chi}^{\mathcal{M}} (u_m i_{h}(w)) \\
&= P_{\chi}^{\mathcal{M}} \left(u_m (P_{1}^{\mathcal{M}}(w))\right)
\end{split}
\]
as desired.
\end{proof}

\begin{lemma}\label{chi}
For each $\chi\in \text{Irr}(G)$, we have $V_\chi\cdot \mathcal{M}_{1} \subset \mathcal{M}_\chi$.
\end{lemma}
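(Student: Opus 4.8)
The plan is to reduce the statement almost entirely to Lemma \ref{actionP}, using only the elementary fact that $P^{V}_{\chi}$ and $P^{\mathcal{M}}_{1}$ are the isotypic idempotents of the finite group $G$. First I would record that, since $\{P_{\psi}\}_{\psi\in\text{Irr}(G)}$ is a complete orthogonal system of idempotents in $\C[G]$, the operator $P^{V}_{\chi}$ is the projection of $V$ onto $V_{\chi}$ and $P^{\mathcal{M}}_{1}$ is the projection of $\mathcal{M}$ onto $\mathcal{M}_{1}$; in particular $x = P^{V}_{\chi}(x)$ for every $x\in V_{\chi}$ and $y = P^{\mathcal{M}}_{1}(y)$ for every $y\in\mathcal{M}_{1}$.

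Next I would recall that $V_{\chi}\cdot\mathcal{M}_{1}$ is by definition the $\C$--span of the modewise products $x_{n}y$ with $x\in V_{\chi}$, $y\in\mathcal{M}_{1}$ and $n\in\mathbb{Q}$ (the mode index is rational since $\mathcal{M}$ is $h$--twisted; cf. Lemma \ref{lem:02} for the analogous description of the submodule generated by a set). Hence it suffices to prove $x_{n}y\in\mathcal{M}_{\chi}$ for each such triple $(x,y,n)$.

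The main step is then the short computation: for $x\in V_{\chi}$, $y\in\mathcal{M}_{1}$ and $n\in\mathbb{Q}$, using the idempotent identities above and then Lemma \ref{actionP},
\[
x_{n}y \;=\; \bigl(P^{V}_{\chi}(x)\bigr)_{n}\bigl(P^{\mathcal{M}}_{1}(y)\bigr) \;=\; P^{\mathcal{M}}_{\chi}\!\left(x_{n}\bigl(P^{\mathcal{M}}_{1}(y)\bigr)\right) \;=\; P^{\mathcal{M}}_{\chi}(x_{n}y)\;\in\; P^{\mathcal{M}}_{\chi}(\mathcal{M})\;=\;\mathcal{M}_{\chi}.
\]
Taking the span over all such vectors yields $V_{\chi}\cdot\mathcal{M}_{1}\subset\mathcal{M}_{\chi}$, which is the assertion.

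I do not anticipate a genuine obstacle, since the content has been packaged into Lemma \ref{actionP}; two points deserve a line of care. First, Lemma \ref{actionP} is stated for an integer mode $m$, whereas here $n\in\frac{1}{T}\mathbb{Z}$ in general; but the proof of Lemma \ref{actionP} uses only the $G$--covariance $g(u)_{n}i_{a}(w) = i_{ga}\bigl((a^{-1}u)_{n}w\bigr)$ of the action on $\mathcal{M} = \oplus_{g\in G} g\circ M$, which holds for every rational $n$, so the identity extends verbatim. Second, no super-sign enters the argument because every element of $G$ acts by a parity-preserving automorphism, so conjugating a mode by $g$ produces no sign. (Note that the hypothesis $h\in Z(G)$ is not needed for this lemma itself; it is what guarantees that each $g\circ M$ is again $h$--twisted, so that $\mathcal{M}$ is a direct sum of $h$--twisted modules.)
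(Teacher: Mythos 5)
Your proof is correct and takes essentially the same route as the paper, which simply says the lemma ``follows from Lemma~\ref{actionP}''; you have merely filled in the routine idempotent computation $x_n y = (P^V_\chi(x))_n(P^{\mathcal{M}}_1(y)) = P^{\mathcal{M}}_\chi(x_n y)$ that the paper leaves implicit. Your side remark about the mode index $n\in\frac{1}{T}\mathbb{Z}$ versus the ``$m\in\Z$'' in the statement of Lemma~\ref{actionP} is a fair observation, and you resolve it correctly: the proof of Lemma~\ref{actionP} is pure group averaging and never uses integrality of the mode.
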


\begin{proof}
It follows from Lemma \ref{actionP}.
\end{proof}
\medskip

\noindent \textbf{Proof of Theorem  \ref{main1}.}  Assume that $M$ is not a simple $V^{G}$--module. Then there is a $V^{G}$--submodule
$0\ne S\subsetneqq M$. Then $P_1^{\mathcal{M}}(S) \subsetneqq P_1^{\mathcal{M}} (M)$.

Now let $s\in S$ be nonzero. Then  $\mathcal{M}$ is generated by $P_1^{\mathcal{M}}(s)$ by Lemma \ref{ALPY-twisted}. By Lemma \ref{lem:02}, we have
\[
\mathcal{M}=V\cdot P_1^{\mathcal{M}}(s) = \oplus_{\chi\in \text{Irr}(G)} V^\chi\cdot P_1^{\mathcal{M}}(s)
\subseteq  \oplus_{\chi\in \text{Irr}(G)} V^\chi\cdot P_1^{\mathcal{M}}(S).
\]
By Lemma \ref{chi},  we have $V^\chi\cdot P_1^{\mathcal{M}}(s) \subset \mathcal{M}_{\chi}$  but $P_1^{\mathcal{M}}(S) \subsetneqq  \mathcal{M}_1$.
Thus $V\cdot P_1^{\mathcal{M}}(s) $ is a proper submodule of $\oplus_{\chi\in \text{Irr}(G)} \mathcal{M}_\chi= \mathcal{M}$, which is a contradiction.
\qed

\medskip

Next we consider the case $G=G_M$.
{
Again, we assume that $V$ is a vertex superalgebra with a countable dimension.} Let $G$ be a finite subgroup of $\mathrm{Aut}(V)$ and $h\in Z(G)$, the center of $G$. Let $M$ be an irreducible  {weak $h$--twisted} module of $V$. Suppose that $  M\cong g\circ  M$ for all $g \in G$, i.e, $G=G_M$.   Let $\alpha_M$ be the corresponding 2-cocycle in $C^2(G, \C^*)$ as described in Section \ref{sec:2}.
Let $1\to\langle \xi_n\rangle \to  \hat{G}\ \bar{\to}\  G\to 1$ be the corresponding central extension of $G$ associated with  $\alpha_M$.
Then $\hat{G}$ acts on $M$ and  we have  $M= \oplus_{ \lambda \in \mathrm{Irr}(\hat{G})}  M^\lambda$, where
$M^\lambda = P_\lambda (M)$.   We use $P^M_\lambda$ to denote the restriction of $P_\lambda$ on $M$ and
$\bar{g}$ to denote the image of $g\in \hat{G}$ under the projection map $\bar{\ }:\hat{G} \to  G$.

\begin{thm} \label{main2}
Let $G$, $\hat{G}$, $h$ and $M$ be as above. Suppose $\lambda$ is a linear character of $\hat{G}$,  i.e., $\lambda(1)=1$.
Then $M^\lambda$ is irreducible as  $V^G$--module if $M^\lambda\neq 0$.
\end{thm}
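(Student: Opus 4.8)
\textbf{Proof proposal for Theorem \ref{main2}.}

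The plan is to mimic the structure of the proof of Theorem \ref{main1}, but now working inside the single irreducible module $M$ itself, decomposed under the twisted group algebra $\mathbb{C}^{\alpha_M}[G]$ (equivalently, under $\hat G$ via the central extension), rather than inside the bigger module $\mathcal{M}=\oplus_{g\in G} g\circ M$. The key point is that, because $G=G_M$, each $g\in G$ acts on $M$ (projectively, via $\phi(g)$), and this action intertwines $Y_M(v,z)$ with $Y_M(g^{-1}v,z)$; hence for $v\in V$ we get a twisted analogue of Lemma \ref{actionP}. Concretely, I would first establish the ``selection rule'': for $\chi\in\mathrm{Irr}(G)$ (acting on $V$ through the genuine $G$-action) and $\mu\in\mathrm{Irr}(\hat G)$ (acting on $M$ through $\hat G$), one has $V_\chi\cdot M^\mu\subseteq \sum_{\nu} M^\nu$, where $\nu$ runs over the irreducible constituents of $\chi\otimes\mu$ as a $\hat G$-module (the tensor product makes sense because $\chi$ pulls back to a genuine character of $\hat G$ trivial on $\langle\xi_n\rangle$). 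The proof of this is the same averaging computation as in Lemma \ref{actionP}: push the projector $P_\chi$ past the mode $v_m$ using $\phi(g)v_m\phi(g)^{-1}=(gv)_m$ and re-index the group sum.

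Next I would specialize to $\mu=\lambda$ a \emph{linear} character of $\hat G$. Then $\chi\otimes\lambda$ is again irreducible as a $\hat G$-module for every $\chi\in\mathrm{Irr}(\hat G)$ (tensoring by a one-dimensional representation is a bijection on $\mathrm{Irr}(\hat G)$), and moreover the assignment $\chi\mapsto\chi\otimes\lambda$ permutes $\mathrm{Irr}(\hat G)$. Restricting attention to those $\chi$ that factor through $G$ (which are exactly the ones appearing in the decomposition of $V$), the selection rule gives $V_\chi\cdot M^\lambda\subseteq M^{\chi\otimes\lambda}$, a single isotypic component. Now suppose $0\neq S\subsetneq M^\lambda$ is a $V^G$-submodule. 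Pick $0\neq s\in S$. Since $M$ is irreducible as a weak $h$-twisted $V$-module, Lemma \ref{lem:02} gives $M=V\cdot s=\oplus_{\chi\in\mathrm{Irr}(G)} V_\chi\cdot s\subseteq \oplus_{\chi} V_\chi\cdot S$. Applying the projector $P_\lambda^M$ and the selection rule, $P_\lambda^M(M)=M^\lambda$ is spanned by the pieces $P_\lambda^M(V_\chi\cdot s)$; but $V_\chi\cdot s\subseteq M^{\chi\otimes\lambda}$, so $P_\lambda^M(V_\chi\cdot s)=0$ unless $\chi\otimes\lambda=\lambda$, i.e. unless $\chi$ is the trivial character, in which case $V_\chi\cdot s=V^G\cdot s\subseteq S$. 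Hence $M^\lambda=V^G\cdot s\subseteq S$, contradicting $S\subsetneq M^\lambda$. Therefore $M^\lambda$ has no proper nonzero $V^G$-submodule.

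There is one bookkeeping subtlety that I expect to be the main obstacle, and it must be handled with care: the $G$-action on $V$ is a genuine (linear) action, while the action on $M$ is only projective, so the two ``group algebras'' are $\mathbb{C}[G]$ acting on $V$ versus $\mathbb{C}^{\alpha_M}[G]\cong$ (a summand of) $\mathbb{C}[\hat G]$ acting on $M$. The clean way to reconcile them is to let $\hat G$ act on \emph{both} $V$ (through the quotient $\hat G\to G$, so $\langle\xi_n\rangle$ acts trivially) and $M$ (through $\phi$), making $V\otimes M\to M$ a $\hat G$-module map; then all the character arithmetic takes place uniformly inside $\mathrm{Rep}(\hat G)$, and ``$\chi\in\mathrm{Irr}(G)$'' just means ``$\chi\in\mathrm{Irr}(\hat G)$ with $\xi_n$ in its kernel.'' One should also check that the projector identity $\phi(g)\,v_m\,\phi(g)^{-1}=(gv)_m$ holds on the nose for the modes $v_m=v_m^h$ of the $h$-twisted module (this is exactly the defining property of the projective representation $\phi$, valid since $h\in Z(G)$ so that $hgh^{-1}=h$ keeps us in the category of $h$-twisted modules), and that $P_\lambda$ for a linear $\lambda$ is a genuine idempotent in $\mathbb{C}^{\alpha_M}[G]$ — equivalently that linear characters of $\hat G$ restrict trivially to $\langle\xi_n\rangle$ only when $\alpha_M$ is a coboundary on the relevant cyclic part; in general $P_\lambda$ is still a well-defined central idempotent of $\mathbb{C}[\hat G]$ landing in the $\alpha_M$-isotypic block, which is all that is needed. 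Modulo this careful setup, the argument is a direct transcription of the Theorem \ref{main1} proof with $\mathcal{M}$ replaced by $M$ and $\mathrm{Irr}(G)$-grading on $\mathcal{M}$ replaced by $\mathrm{Irr}(\hat G)$-grading on $M$.
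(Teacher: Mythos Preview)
Your proposal is correct and follows essentially the same route as the paper: you establish the selection rule $V_\chi\cdot M^\lambda\subseteq M^{\chi\lambda}$ via the projector computation (the paper proves the identical identity $(P^V_\chi u)_m(P^M_\lambda w)=P^M_{\lambda\chi}(u_m P^M_\lambda w)$ by the same $\hat G$-reindexing), and then argue by contradiction from a proper $V^G$-submodule $S\subsetneq M^\lambda$. Your phrasing of the contradiction (projecting $M=V\cdot s$ onto $M^\lambda$ to force $M^\lambda=V^G\cdot s\subseteq S$) and the paper's phrasing (observing $V\cdot S$ is a proper $V$-submodule of $M$, hence $S=0$) are two sides of the same coin; both rely on the fact that $\chi\lambda=\lambda$ forces $\chi=1$ when $\lambda$ is linear. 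Your extended discussion of the $\hat G$-versus-$G$ bookkeeping is more explicit than the paper's, but the underlying setup is identical.
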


\begin{proof}
The proof is similar to the proof of Theorem \ref{main1}.
Let $\lambda$ be a linear character of $\hat{G}$. We claim that
\[
\left(P^V_{\chi}(u)\right)_m \left(P_{\lambda}^{M}(w)\right) =  P_{\lambda\chi}^{M} \left(u_m \left(P_{\lambda}^{M}w\right)\right)
\]
for any character $\chi\in \text{Irr}(G)$ and $u\in V$, $w\in M$ and $m\in \Z$.
 Note that an irreducible character $\chi $ of $G$ can be viewed as an irreducible character of $\hat{G}$  by assuming the center $\langle \xi_n\rangle$ acts trivially, i.e,
 $\chi(g)= \chi(\xi_n^i g)=\chi(\bar{g})$ for any $g\in \hat{G}$ and $0 \leq i< n$. Then  we have
 \[
 P^V_{\chi}(u)=\frac{\chi(1)}{|G|}\sum_{s\in G} \chi(s^{-1}) su =
 \frac{\chi(1)}{|\hat{G}|}\sum_{g\in \hat{G}} \chi(g^{-1}) gu.
 \]

By direct calculations, we have
\[
\begin{split}
\left(P^V_{\chi}(u)\right)_m \left(P_{\lambda}^{M}(w)\right)
&=
{
\left(\frac{\chi(1)}{|\hat{G}|}\sum_{g\in \hat{G}} \chi(g^{-1}) gu\right)_m \left ( \frac{\lambda(1)}{|\hat{G}|}
\sum_{h\in \hat{G}} \lambda(h^{-1}) hw \right)
}
\\
&= \frac{\chi(1)}{|\hat{G}|^2} \sum_{g\in \hat{G}} \sum_{h\in \hat{G}} \chi(g^{-1}) (gu)_m  \left(\lambda(h^{-1}) hw\right)\\
&= \frac{\chi(1)}{|\hat{G}|^2} \sum_{g\in \hat{G}} \sum_{h\in \hat{G}} \chi(g^{-1})
\lambda(g^{-1}) g\left( \lambda(h^{-1} g) u_m (g^{-1}h)w\right) \\
&= P_{\lambda \chi}^{M} \left(u_m (P_{\lambda}^{M}(w))\right).
\end{split}
\]
Let $S\subsetneq M^\lambda$ be a proper $V^G$-submodule. Then $V^\chi\cdot S \subset M^{\lambda \chi}$  for any $\chi\in \mathrm{Irr}(G)$ and $V\cdot S$ is a proper submodule of $M$. Therefore, $S=0$ and we have the desired result.
\end{proof}

\section{On complete reducibility of certain $V^{\left\langle g\right\rangle }$--modules}

In this section, we still let $V$ be a { vertex superalgebra with a countable dimension} and $g\in \text{Aut}(V)$ with $o(g)=T$.   Let
\[
V=\coprod_{j\in\mathbb{Z}/T\mathbb{Z}}V^{j},
\]
 where $V^{j}=\left\{ v\in V\mid gv=\eta^{j}v\right\} $ and $\eta$ is  a fixed primitive $T$-th root of unity  as before.

\begin{thm} \label{complete-reducibility-1} Assume that $g$ is an automorphism of $V$ of finite
order $T$. Assume that $W$ is an irreducible weak $g$--twisted $V$--module
such that $g \circ W \cong W$. Then $W$ is a completely reducible
weak $V^{0}$--module such that

(1) $W=\oplus_{i=0}^{T}W^{i}$, $V^{i}.W^{j}\subset W^{i+j\ \text{mod\ } T}$,
where $W^{j}$, $j=1,\cdots,T$,  are eigenspaces of certain linear
isomorphism $\phi\left(g\right):W\to W$.

(2) Each $W^{i}$ is an irreducible weak $V^{0}$--module.

(3) The modules $W^{i},$ $i=0,\cdots,T-1$, are non-isomorphic as
weak $V^{0}$--modules.

\end{thm}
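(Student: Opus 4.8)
The plan is to adapt the character-projection machinery from the proof of Theorem \ref{main1} to the cyclic group $\langle g\rangle$ acting not on $\mathcal M=\oplus_{a}a\circ M$ but on $W$ itself, using the $g$-stability hypothesis to produce the required action. First I would invoke Schur's lemma (valid since $V$, hence $W$, has countable dimension by Lemma \ref{countable dimensional}): because $g\circ W\cong W$, there is a linear isomorphism $\phi(g)\colon W\to W$ with $\phi(g)^{-1}Y_W(v,z)\phi(g)=Y_W(g^{-1}v,z)$ for all $v\in V$, unique up to scalar. From $Y_{g^T\circ W}=Y_W$ (as $g^T=1$ on $V$) and Schur, $\phi(g)^T$ is a nonzero scalar, so after rescaling I may assume $\phi(g)^T=\mathrm{Id}_W$; thus $\phi(g)$ is a linear automorphism of finite order $T$ on $W$, and its eigenvalues lie among the $T$-th roots of unity. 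Set $W^j=\{w\in W\mid \phi(g)w=\eta^{-j}w\}$ (sign chosen so the grading matches), giving $W=\oplus_{j\in\Z/T\Z}W^j$. The intertwining relation rewritten on modes reads $\phi(g)\,v_n\,\phi(g)^{-1}=(g^{-1}v)_n$, i.e. $(gu)_n\phi(g)=\phi(g)u_n$; for $u\in V^i$ this gives $\eta^i u_n\phi(g)w=\phi(g)u_n w$, so $u_n W^j\subset W^{i+j}$, which is part (1); in particular each $W^j$ is a weak $V^0$-module.

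For parts (2) and (3) I would run the projection argument. Define $P_k=\frac1T\sum_{l=0}^{T-1}\eta^{kl}\phi(g)^l$ on $W$, the projection onto $W^k$, and analogously $Q_k$ the projection of $V$ onto $V^k$; these are the counterparts of $P_\chi^{\mathcal M}$ and $P_\chi^V$ for $G=\langle g\rangle$, whose irreducible characters are $\chi_k(g^l)=\eta^{kl}$. The key identity, proved by the same bookkeeping as Lemma \ref{actionP}, is
\[
(Q_i u)_n\,P_j w \;=\; P_{i+j}\bigl((Q_i u)_n\,P_j w\bigr),\qquad\text{equivalently}\qquad V^i\cdot W^j\subset W^{i+j},
\]
together with the fact that acting by all of $V$ on a single $W^j$ already exhausts $W$ once $W^j\neq0$: given $0\neq w\in W^j$, irreducibility of $W$ over $V[g]$ (equivalently, Lemma \ref{lem:02}) gives $W=V\cdot w=\oplus_i V^i\cdot w\subset\oplus_i W^{i+j}$. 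Now suppose $0\neq S\subsetneq W^j$ were a proper $V^0$-submodule. Pick $0\neq w\in S$; then $V\cdot w=\oplus_i V^i\cdot w$ with $V^i\cdot w\subset W^{i+j}$ and $V^0\cdot w\subset S\subsetneq W^j$, so $V\cdot w$ is a proper submodule of $W$, contradicting irreducibility of $W$. Hence each nonzero $W^j$ is irreducible over $V^0$; and since $\phi(g)$ is defined on all of $W$ with $\phi(g)^T=\mathrm{Id}$ and $W$ is nonzero, at least one, and then (shifting by the $V^i$) all of the $W^j$ are nonzero, giving $W=\oplus_{i=0}^{T-1}W^i$ with every summand irreducible.

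For part (3), non-isomorphism, I would argue by contradiction: an isomorphism $\psi\colon W^i\to W^j$ of $V^0$-modules, combined with the $V$-module structure on $W$, would force $W^i\cong W^j$ in a way incompatible with the distinct $\phi(g)$-eigenvalues — more precisely, I would show that a $V^0$-isomorphism $W^i\cong W^j$ extends/obstructs against the relation $V^1\cdot W^i\subset W^{i+1}$ and the irreducibility of $W$ over $V$, so that $W^i\cong W^j$ as $V^0$-modules only when $i\equiv j\pmod T$; alternatively one shows directly that if $W^i\cong W^j$ then $W$ would carry a $V[g]$-module automorphism not a scalar multiple of a power of $\phi(g)$, contradicting Schur. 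I expect this last point — ruling out accidental isomorphisms among the $W^i$ as $V^0$-modules — to be the main obstacle, since it is exactly here that one must use more than the formal projector identities; the resolution should mirror the corresponding step in \cite{DM} and \cite{DY}, exploiting that $\mathrm{Hom}_{V^0}(W^i,W^j)$ sits inside $\mathrm{Hom}_{V[g]^{\langle g\rangle}}$-type data controlled by Schur's lemma for the countable-dimensional irreducible $V[g]$-module $W$.
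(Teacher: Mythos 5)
Your treatment of parts (1) and (2) is correct and is essentially the paper's argument: the paper routes it through Theorem \ref{main2} with $G=G_M=\langle g\rangle$, whereas you unwind the same projector bookkeeping directly, and your rescaling of $\phi(g)$ so that $\phi(g)^T=\mathrm{Id}$ is legitimate because $H^2(\Z/T\Z,\C^{\times})$ is trivial. One overclaim to flag: the assertion that all $W^j$ are nonzero ``by shifting with the $V^i$'' tacitly assumes every homogeneous component $V^i$ of $V$ is nonzero, which is not automatic; the paper also leaves this implicit, but you should not assert it without justification.

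The genuine gap is in part (3), which you yourself call ``the main obstacle'' and then leave at the level of two unfinished sketches. Both aim at a contradiction via Schur's lemma, but the missing step is precisely how a $V^0$-isomorphism $\psi\colon W^i\to W^j$ is to be upgraded to a $V$-module endomorphism of $W$, so that Schur's lemma for $W$ over $V[g]$ (rather than over $V^0$, where $\mathrm{End}$ is large) can be applied at all. The paper's device is to pass to $W\oplus W$: take $0\ne w\in W^i$ and form the cyclic submodule $\mathcal U=V\cdot(w,\psi(w))\subset W\oplus W$. Since $\psi$ is $V^0$-linear and $\phi(g)$ grades everything, the $(W^i\oplus W^i)$-slice of $\mathcal U$ is strictly smaller than $W^i\oplus W^i$, so $\mathcal U$ is a proper nonzero submodule; as $W\oplus W$ has length two over $\mathcal U(V[g])$, Jordan--H\"older forces $\mathcal U\cong W$. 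Both coordinate projections $\mathcal U\to W$ are then nonzero maps between simple $V[g]$-modules, hence isomorphisms, and their composite $\Phi\colon v_n w\mapsto v_n\psi(w)$ is a genuine $V$-module automorphism of $W$. Schur's lemma gives $\Phi=a\,\mathrm{Id}$, whence $\psi(w)=aw\in W^i$, contradicting $\psi(w)\in W^j$ with $j\ne i$. It is this graph-submodule construction inside $W\oplus W$ --- not a formal projector identity, and not an unexamined appeal to \cite{DM} or \cite{DY} --- that your proposal would need to supply to close part (3).
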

\begin{proof}
(1) and (2) follow from Theorem \ref{main2} by letting $G=G_M=\langle g\rangle$.

For (3),  assume that there is $V^{0}$-isomorphism $\psi:W^{i}\to W^{j},$
for some $i\not=j$. Let $w$ be a nonzero vector in $W^{i}$ and
consider
\[
\mathcal{U}=V.\left(w,\psi\left(w\right)\right)=\text{Span}_{\mathbb{C}}\left\{ \left(v_{n}w,v_{n}\psi\left(w\right)\right)\mid v\in V\right\} .
\]
Then $W^{i}\oplus W^{i}$ is not in $\mathcal{U}$ and hence $\mathcal{U}$
is a proper submodule of $W\oplus W$. Note that $W\oplus W$ is a
$U\left(\mathfrak{g}\left(V\right)\right)$--module of finite length,
the Jordan Holder theorem can be applied. Comparing the filtrations
\[
\left(0\right)\to W\to W\oplus W,\ \ \left(0\right)\to\mathcal{U}\to W\oplus W,
\]
we obtain $\mathcal{U}\cong W$ as $U\left(\mathfrak{g}\left(V\right)\right)$--modules
by the simplicity of $W$. Thus $\mathcal{U}\cong W$ as $V$--modules.
Now both projection maps $\mathcal{U}\to W\oplus\left(0\right)$ and
$\mathcal{U}\to\left(0\right)\oplus W$ are $V$--isomorphisms. Thus
the map $\Phi:v_{n}w\mapsto v_{n}\psi\left(w\right),$ for $v\in V$,
is also an isomorphism. Using Schur's Lemma, we obtain $\Phi=a\mathrm{Id}$
for $a\in\mathbb{C}$ and hence $\psi\left(w\right)=aw\in W^{i}.$ But
$i\not=j,$ which is a contradiction.
\end{proof}

\section{Whittaker modules for  $N=1$ Neveu-Schwarz vertex superalgebras and their orbifolds }
\label{ns}

In this section we shall apply our results to orbifolds  for  $N=1$ Neveu-Schwarz vertex superalgebras $V_c(\mathfrak{ns})$. The Whittaker modules for   $V_c(\mathfrak{ns})$ were studied in \cite{LPX,LPX-ramond}. Their Whittaker modules are irreducible as $\Z_2$--graded modules (see Propositions \ref{LPX} and \ref{LPX-ram} below). Here we identify a family of non $\Z_2$--graded Whittaker modules, on which our theory can be applied.
\vskip 5mm

Recall that the Neveu-Schwarz algebra is the Lie superalgebra
$$ \mathfrak{ns} = \bigoplus _{i \in {\Z}} {\C} L(i) \oplus \bigoplus _{r \in \tfrac{1}{2} + {\Z}} {\C} G(r) \oplus {\C} C $$
which satisfies the following commutation relations:
 \begin{eqnarray}
   [L(m), L(n)] &=& (m-n)L(m+n) + \tfrac{1}{12}(m^3-m)\delta_{n+m,0} C \nonumber  \\ \
   [L(m), G(r)] &=&  (\frac{m}{2} -r ) G(m+r)  \nonumber \\ \
   [G(r), G(s) ] &=& 2 L(r+s) +  (r^2-\tfrac{1}{4})\delta_{r+s,0} C \nonumber \\ \
   [\mathfrak{ns}, C] &=& 0 \nonumber
\end{eqnarray}
for all $m,n \in {\Z}$, $r, s \in {\tfrac{1}{2} + \Z}$.

Let $V_{c} (\mathfrak{ns})$ be the universal Neveu-Schwarz vertex superalgebra of central charge $c \in {\C}$. The category of weak   $V_{c} (\mathfrak{ns})$--modules coincides with the category of restricted $\mathfrak{ns}$--modules of central charge $c$.

Note that $V_{c} (\mathfrak{ns})$ has the canonical automorphism $\sigma$ of order two which is lifted from the automorphism of Lie superalgebra $\mathfrak{ns}$ such that
$$ L(n) \mapsto L(n), G(n+ \tfrac{1}{2}) \mapsto - G(n+\tfrac{1}{2}), C \mapsto C \quad (n \in {\Z}). $$
Let $V_{c}  ^+ (\mathfrak{ns}) $ be the fixed point vertex subalgebra.  It is proved in \cite{HMW, MP1} that $V_{c}  ^+ (\mathfrak{ns}) $  is a $W$--algebra of type $W(2,4,6)$.

Let $$\mathfrak{p} = \bigoplus _{i >0 }   L(i) \oplus \bigoplus _{i > 0} {\C} G(i+\tfrac{1}{2})$$
 and let $\Psi :  \mathfrak{p} \rightarrow {\C}$ be a Lie superalgebra homomorphism.
 Assume that $\Psi$ is non-zero. Then $\Psi$ is uniquely determined by  $(\mu,\nu) \in {\C}^2$, $\mu \cdot \nu \ne 0$ such that
 $\Psi(G(i+\tfrac{1}{2}))  =\Psi(L(i+2))= 0$ for each $i \in {\Z}_{>0}$  and
 $\Psi(L(1))  = \mu  $  and  $\Psi(L(2))  = \nu$. Denote such homomorphism by  $\Psi_{\mu, \nu}$.

 Let ${\C} w$ be the $1$--dimensional $\left(\mathfrak{p} + {\C} C\right)$--module such that
 $$ x w = \Psi_{\mu, \nu} (x) w \quad (x \in \mathfrak{p}), \quad C w = c w.$$
 The universal Whittaker module associated to Whittaker function $\Psi_{\mu, \nu}$ is defined as
 $$Wh (\Psi_{\mu, \nu} , c) =  U(\mathfrak{ns}) \otimes_{U(\mathfrak{p} + {\C} C)} {\C} w.$$
   It is also  an restricted  $\mathfrak{ns}$--module, so it is a  $V_{c} (\mathfrak{ns})$--module.   The following result was proved in \cite{LPX}:
  \begin{prop}\label{LPX} \cite{LPX}  Assume that  $\mu \cdot \nu \ne 0$. Then  $Wh  (\Psi_{\mu, \nu}, c) $
  %= Wh  (\Psi_{\mu, \nu}, c) ^{\bar 0} +  Wh  (\Psi_{\mu, \nu}, c) ^{\bar 1}$
   is an irreducible $\Z_2$--graded $V_{c} (\mathfrak{ns})$--module.
  \end{prop}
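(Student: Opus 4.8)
The plan is to realize $Wh(\Psi_{\mu,\nu},c)$ as an induced module and analyze an arbitrary nonzero submodule using the PBW-type filtration on $U(\mathfrak{ns})$. First I would fix a triangular-type decomposition of $\mathfrak{ns}$: write $\mathfrak{ns}_+ = \bigoplus_{i>0}\C L(i)\oplus\bigoplus_{i\geq 0}\C G(i+\tfrac12)$, $\mathfrak{ns}_- = \bigoplus_{i<0}\C L(i)\oplus\bigoplus_{i<0}\C G(i+\tfrac12)$, and $\mathfrak{ns}_0 = \C L(0)\oplus\C C$, and record that by PBW the module $Wh(\Psi_{\mu,\nu},c)$ is free over $U(\mathfrak{ns}_-)$ on the cyclic vector $w$ (after observing that $G(\tfrac12)$ acts on $w$ via $G(\tfrac12)^2 = L(1)$, so $G(\tfrac12)w$ is, up to the scalar forced by $G(\tfrac12)^2 w = \mu w$, a new generator, handled by enlarging $\mathfrak{p}$ appropriately or by noting the relevant ordering). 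The $\Z_2$-grading is the obvious one: $w$ even, and parity counts the number of odd generators $G(r)$ applied.

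The core of the argument is a standard Whittaker-module reduction. Let $0\neq N\subseteq Wh(\Psi_{\mu,\nu},c)$ be a $\Z_2$-graded submodule; I want to show $w\in N$, which forces $N = Wh(\Psi_{\mu,\nu},c)$ by cyclicity. Introduce on $Wh(\Psi_{\mu,\nu},c)$ the filtration by "degree" where $L(-n)$ and $G(-n+\tfrac12)$ have degree $n>0$ and $w$ has degree $0$; equivalently, use the standard increasing filtration coming from $\mathfrak{p}$. The key computation is that for a monomial $x = L(-i_1)\cdots G(-j_1)\cdots w$ of top degree $d>0$, applying a suitably chosen raising operator $L(i)$ or $G(i+\tfrac12)$ with $i$ large (larger than all indices appearing) either annihilates the monomial or strictly lowers the degree, and the "leading term" of that action is computed purely from the commutators $[L(i),L(-i)] = 2i L(0) + \text{(central)}$, $[L(i),G(-i+\tfrac12)] = \cdots$, together with the action of $\mathfrak{p}$ on $w$ via $\Psi_{\mu,\nu}$, which contributes the nonzero scalars $\mu$ and $\nu$ on $L(1)w$ and $L(2)w$. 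Because $\mu\cdot\nu\neq 0$, these scalars are units, so one can peel off generators one at a time: given any nonzero homogeneous $v\in N$, choose a monomial of maximal degree occurring in $v$, apply an appropriate sequence of raising operators to kill all strictly-higher-index obstructions and reduce the degree, and iterate, arriving after finitely many steps at a nonzero scalar multiple of $w$ inside $N$. This is the part I expect to require the most care: choosing the raising operators in the right order so that at each stage the leading scalar is a nonzero polynomial in $\mu,\nu$ (and the $L(0)$-eigenvalue, which is bounded below on any fixed submodule), and checking the odd/even bookkeeping so that $G(\tfrac12)$ and the half-integer modes do not spoil the induction.

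With $w\in N$ established, $N \supseteq U(\mathfrak{ns})w = Wh(\Psi_{\mu,\nu},c)$, so $Wh(\Psi_{\mu,\nu},c)$ has no proper nonzero $\Z_2$-graded submodule, i.e. it is irreducible as a $\Z_2$-graded $\mathfrak{ns}$-module. Since weak $V_c(\mathfrak{ns})$-modules are exactly restricted $\mathfrak{ns}$-modules of central charge $c$ and the $V_c(\mathfrak{ns})$-module structure is determined by the $\mathfrak{ns}$-action, this gives irreducibility of $Wh(\Psi_{\mu,\nu},c)$ as a $\Z_2$-graded $V_c(\mathfrak{ns})$-module, completing the proof. (The statement is quoted from \cite{LPX}, so in the paper itself one may simply cite that reference; the sketch above indicates the mechanism behind it.)
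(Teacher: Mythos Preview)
The paper does not give a proof of this proposition at all: it is stated as a result quoted from \cite{LPX} and used as input for the subsequent lemmas. You already note this in your final parenthetical, and that is exactly what happens---the paper's ``proof'' is the citation.

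Your sketch is a reasonable outline of the standard Whittaker argument that underlies \cite{LPX}, but a couple of points would need tightening before it is an actual proof. First, $Wh(\Psi_{\mu,\nu},c)$ is not free over $U(\mathfrak{ns}_-)$ on $w$ alone: since $G(\tfrac12)$ and $L(0)$ lie outside $\mathfrak p$, a PBW basis involves $L(0)^a G(\tfrac12)^\epsilon w$ with $a\ge 0$, $\epsilon\in\{0,1\}$ as well; you flag this parenthetically but it affects how the degree-reduction is organized. Second, and more substantively, the module is \emph{not} irreducible as an ungraded $\mathfrak{ns}$-module (Lemma~\ref{ired-ns-1} in the paper exhibits the proper submodules $Wh(\widetilde\Psi_{\pm a,\nu},c)$ generated by $G(\tfrac12)w\mp a w$), so the $\Z_2$-grading is doing real work: your reduction must start from a \emph{homogeneous} element of $N$, and the endgame is not ``reach $w$'' but ``reach the one-dimensional space of even (resp.\ odd) Whittaker vectors.'' With those adjustments the outline is sound; as written, the phrase ``arriving\ldots at a nonzero scalar multiple of $w$'' glosses over precisely the place where the ungraded argument would fail.
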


  We can write   $\mu = a^2$ for $a\in {\C}$.
Let now  $\widetilde{\mathfrak{p}} =  \mathfrak{p}  \oplus  {\C} G(\tfrac{1}{2})$.
Let $ \widetilde \Psi_{a,\nu}: \widetilde{\mathfrak{p}} \rightarrow {\C}$ be a Lie superalgebra homomorphism uniquely determined by
 $\widetilde \Psi_{a, \nu}(G(i+1/2))  =\widetilde \Psi_{a, \nu}(L(i+2))= 0$ for each $i \in {\Z}_{>0}$  and
$$ \widetilde \Psi_{a, \nu} (G(1/2)) =  a, \widetilde  \Psi(L(1))  = a^2= \mu,  \widetilde \Psi(L(2))  = \nu . $$

 The universal Whittaker module associated to Whittaker function $\widetilde \Psi_{a, \nu}$ is defined as
 $$Wh(\widetilde \Psi_{a, \nu} , c) =  U(\mathfrak{ns}) \otimes_{U(\widetilde{\mathfrak{p}} + {\C} C)} {\C} w.$$
 Let   $ L(\widetilde \Psi_{a, \nu} , c)$ be its simple quotient.

\begin{lemma}  \label{ired-ns-1}  Assume that $\mu \cdot \nu \ne 0$ and $\mu = a^2$. Then
$$ Wh (\Psi_{\mu, \nu} , c)  = Wh(\widetilde \Psi_{a, \nu} , c) + Wh(\widetilde \Psi_{-a, \nu} , c) $$
 and  $Wh(\widetilde \Psi_{a, \nu} , c)$ is an irreducible weak   $V_{c} (\mathfrak{ns})$--module, i.e.  $  L(\widetilde \Psi_{a, \nu} , c) = Wh(\widetilde \Psi_{a, \nu} , c)$
\end{lemma}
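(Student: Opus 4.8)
The plan is to work entirely inside the Lie superalgebra $\mathfrak{ns}$ and exploit the action of the Whittaker vector $G(1/2)$. First I would establish the sum decomposition. Since $G(1/2)^2 = \tfrac12[G(1/2),G(1/2)] = L(1)$ acts on the Whittaker vector $w$ of $Wh(\Psi_{\mu,\nu},c)$ by the scalar $\mu = a^2 \neq 0$, the operator $G(1/2)$ is invertible on the cyclic vector and, more to the point, $w = \tfrac12(w_+ + w_-)$ where $w_\pm = w \pm a^{-1}G(1/2)w$ satisfy $G(1/2)w_\pm = \pm a\, w_\pm$. Each $w_\pm$ generates under $U(\mathfrak{ns})$ a quotient of $Wh(\widetilde\Psi_{\pm a,\nu},c)$ (one checks the defining relations: $G(i+1/2)w_\pm = 0$ and $L(i+2)w_\pm = 0$ for $i>0$, $L(1)w_\pm = a^2 w_\pm$, $L(2)w_\pm = \nu w_\pm$, which follow from the corresponding relations on $w$ together with the commutation relations $[G(r),G(1/2)]$ and $[L(m),G(1/2)]$), and together they generate all of $Wh(\Psi_{\mu,\nu},c)$ since $w$ lies in their span. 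This gives the asserted equality $Wh(\Psi_{\mu,\nu},c) = Wh(\widetilde\Psi_{a,\nu},c) + Wh(\widetilde\Psi_{-a,\nu},c)$ as $\mathfrak{ns}$-modules, hence as weak $V_c(\mathfrak{ns})$-modules.

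Next I would prove irreducibility of $Wh(\widetilde\Psi_{a,\nu},c)$. The key observation is that on $Wh(\Psi_{\mu,\nu},c)$, the canonical automorphism $\sigma$ (lifted to the module level, sending $G(r) \mapsto -G(r)$) interchanges the two summands: $\sigma$ maps a vector generating $Wh(\widetilde\Psi_{a,\nu},c)$ to one generating $Wh(\widetilde\Psi_{-a,\nu},c)$, because it flips the sign of the $G(1/2)$-eigenvalue. Since $Wh(\Psi_{\mu,\nu},c)$ is irreducible as a $\Z_2$-graded module by Proposition \ref{LPX}, and since the $\Z_2$-grading there is precisely (up to the usual shift) the decomposition into the $\pm a$ eigenspaces of $G(1/2)$ — equivalently the decomposition $\mathcal{M} = M \otimes \C[\Z_2]$ structure — any proper submodule of $Wh(\widetilde\Psi_{a,\nu},c)$ would, together with its $\sigma$-image, produce a proper graded submodule of $Wh(\Psi_{\mu,\nu},c)$, a contradiction. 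Concretely: if $0 \neq N \subsetneq Wh(\widetilde\Psi_{a,\nu},c)$ is a $V_c(\mathfrak{ns})$-submodule, then $N \oplus \sigma(N)$ is a nonzero $\Z_2$-graded submodule of $Wh(\Psi_{\mu,\nu},c)$; it is proper because its $+a$-eigenspace component is $N \subsetneq Wh(\widetilde\Psi_{a,\nu},c)$. This forces $N = Wh(\widetilde\Psi_{a,\nu},c)$, so $Wh(\widetilde\Psi_{a,\nu},c)$ is irreducible and equals its own simple quotient $L(\widetilde\Psi_{a,\nu},c)$.

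I expect the main obstacle to be the bookkeeping around what exactly the $\Z_2$-grading on $Wh(\Psi_{\mu,\nu},c)$ is and why it coincides with the $G(1/2)$-eigenspace decomposition of the two Whittaker summands. One must be careful that $Wh(\Psi_{\mu,\nu},c)$ is built over the parabolic $\mathfrak{p}$ which does \emph{not} contain $G(1/2)$, so the generating vector $w$ is not itself homogeneous for $G(1/2)$; it is the passage to $w_\pm$ that realizes the grading, and one needs that $G(1/2)$ acts invertibly (which is where $\mu \neq 0$ is used) to know that $w_+, w_-$ are both nonzero and that the sum is direct as graded pieces. A secondary point to verify carefully is that $w_\pm$ generates \emph{exactly} $Wh(\widetilde\Psi_{\pm a,\nu},c)$ and not a proper quotient — but this is not actually needed for the lemma as stated, since we only claim the sum decomposition and the irreducibility (hence identification with the simple quotient) of $Wh(\widetilde\Psi_{a,\nu},c)$; the universal module $Wh(\widetilde\Psi_{a,\nu},c)$ being irreducible a fortiori means it coincides with its simple quotient. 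Everything else is a routine computation with the $\mathfrak{ns}$ commutation relations.
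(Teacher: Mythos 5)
Your decomposition step is fine and your target strategy (exploit $\sigma$ swapping the two eigen-pieces of $G(1/2)$ together with the $\Z_2$-graded irreducibility of $Wh(\Psi_{\mu,\nu},c)$ from Proposition \ref{LPX}) is the same engine the paper uses. But the way you set it up leaves a genuine gap. You work \emph{inside} $Wh(\Psi_{\mu,\nu},c)$ with the submodules $M_\pm := U(\mathfrak{ns})w_\pm$, for which you only know that they are \emph{quotients} of $Wh(\widetilde\Psi_{\pm a,\nu},c)$, and you never verify that the internal sum $M_+ + M_-$ is direct nor that $M_\pm$ is isomorphic to the full universal module $Wh(\widetilde\Psi_{\pm a,\nu},c)$. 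Both facts are needed before a proper submodule $N\subsetneq Wh(\widetilde\Psi_{a,\nu},c)$ can be transplanted into $Wh(\Psi_{\mu,\nu},c)$ as ``$N\oplus\sigma(N)$'' and declared proper. Your justification for properness --- that ``its $+a$-eigenspace component is $N$'' --- rests on the claim that the $\Z_2$-grading of $Wh(\Psi_{\mu,\nu},c)$ coincides with a $G(1/2)$-eigenspace decomposition, and this is simply false: the $\Z_2$-grading is the super-parity $W^{\bar 0}\oplus W^{\bar 1}$, on which $\sigma$ acts by $\pm 1$, whereas $M_+\oplus M_-$ (if it is a direct sum at all) is a decomposition into $\mathfrak{ns}$-submodules that $\sigma$ \emph{swaps}. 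Moreover $G(1/2)$ does not act semisimply on the whole module, so there is no ``$\pm a$-eigenspace of $Wh(\Psi_{\mu,\nu},c)$'' beyond the line spanned by $w_\pm$. The missing directness and the identification $M_\pm\cong Wh(\widetilde\Psi_{\pm a,\nu},c)$ can in fact be salvaged by a PBW bookkeeping argument (writing $U(\mathfrak{ns}^-) = U(\mathfrak{ns}^{--}) \oplus U(\mathfrak{ns}^{--})G(1/2)$ and comparing components), but as written your proof does not do this.

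The paper's proof avoids all of this by going in the opposite direction. It forms the \emph{external} direct sum $W = \widetilde L(\widetilde\Psi_{a,\nu},c) \oplus \widetilde L(\widetilde\Psi_{-a,\nu},c)$ for arbitrary nonzero quotients, observes that $w_0 = (w_{a,\nu}, w_{-a,\nu})$ and $w_1 = G(1/2)w_0$ are linearly independent and that $w_0$ is a Whittaker vector for $\Psi_{\mu,\nu}$ generating $W$, so $W$ is a $\Z_2$-graded cyclic quotient of $Wh(\Psi_{\mu,\nu},c)$. Proposition \ref{LPX} then forces $W\cong Wh(\Psi_{\mu,\nu},c)$, \emph{for every choice of nonzero quotients}; comparing the choice $\widetilde L_\pm = Wh(\widetilde\Psi_{\pm a,\nu},c)$ with $\widetilde L_\pm = L(\widetilde\Psi_{\pm a,\nu},c)$ immediately yields $Wh(\widetilde\Psi_{\pm a,\nu},c) = L(\widetilde\Psi_{\pm a,\nu},c)$. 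This sidesteps directness and the eigenspace confusion entirely: the only input is $\Z_2$-graded irreducibility plus the observation that the external sum is cyclic on a $\Psi_{\mu,\nu}$-Whittaker vector. You should restructure your argument along these lines, or else supply the PBW computation that makes your internal-sum version airtight.
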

 \begin{proof}
Let   $\widetilde L(\widetilde \Psi_{a, \nu} , c)$  by  any  quotient of  $Wh(\widetilde \Psi_{a, \nu} , c)$ with Whittaker vector $w_{a, \nu}$.   Consider the following $\mathfrak{ns}$--module
$$ W = \widetilde L(\widetilde \Psi_{a, \nu} , c) + \widetilde L(\widetilde \Psi_{-a, \nu} , c).$$
Set $$w_0 = w_{a, \nu} + w_{-a, \nu}, w_1 = G(\frac{1}{2}) w_0 = a w_ {a, \nu} - a w_{-a, \nu}.$$
Then $w_0, w_1$ are linearly independent. One gets that $w_0$ is a Whittaker vector with Whittaker function $\Psi_{\mu, \nu}$ and applying Proposition \ref{LPX} we  easily get that  $W = Wh(\Psi_{\mu, \nu} , c)$ is the corresponding universal Whittaker module. This implies  that
$$ Wh(\Psi_{\mu, \nu} , c) =  Wh(\widetilde \Psi_{a, \nu} , c) + Wh(\widetilde \Psi_{-a, \nu} , c)  =  L (\widetilde \Psi_{a, \nu} , c) + L(\widetilde \Psi_{-a, \nu} , c). $$
Thus,   $Wh(\widetilde \Psi_{ a, \nu} , c) =   L (\widetilde \Psi_{a, \nu} , c) $ and the claim holds.
 \end{proof}

 \begin{prop}
%Let  $Wh (\widetilde \Psi_{a,\nu}, c)$ is an irreducible $V_{c} (\mathfrak{ns})$--module for each nonzero Whittaker function $\Psi_{a, \nu}$ as above.  Then $Wh (\widetilde \Psi_{a,\nu}, c)$ is an irreducible $V_{c} ^+(\mathfrak{ns})$--module.
\item[(1)] Let $a \ne 0$.  Then $ (\widetilde \Psi_{a,\nu}, c)$ is an irreducible $V_{c} ^+(\mathfrak{ns})$--module.
\item[(2)]   $L  (\widetilde \Psi_{0,\nu}, c)$ is  a direct sum of two non-isomorphic irreducible modules for $V_{c} ^+(\mathfrak{ns})$.
 \end{prop}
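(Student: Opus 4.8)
The plan is to apply Theorem \ref{main2} and Theorem \ref{complete-reducibility-1} with $G=\langle\sigma\rangle\cong\Z_2$, taking $V=V_c(\mathfrak{ns})$ and $V^G=V_c^+(\mathfrak{ns})$, but the crucial preliminary step is to decide, in each of the two cases, whether $\sigma\circ M\cong M$ for $M=L(\widetilde\Psi_{a,\nu},c)$. Since $\sigma$ reverses the sign of all odd generators $G(r)$, the module $\sigma\circ L(\widetilde\Psi_{a,\nu},c)$ is a Whittaker module whose Whittaker function sends $G(1/2)\mapsto -a$, hence $\sigma\circ L(\widetilde\Psi_{a,\nu},c)\cong L(\widetilde\Psi_{-a,\nu},c)$; the Whittaker function on the even part is unchanged. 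So for $a\neq 0$ these two Whittaker functions are distinct and the modules are non-isomorphic, while for $a=0$ we have $\sigma\circ M\cong M$.

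For part (1), with $a\neq 0$ I would invoke Theorem \ref{main1} (the case $G_M=1$): since $\sigma\circ L(\widetilde\Psi_{a,\nu},c)\cong L(\widetilde\Psi_{-a,\nu},c)\not\cong L(\widetilde\Psi_{a,\nu},c)$ and $\sigma$ is central in $\Z_2$, the irreducible weak $V_c(\mathfrak{ns})$-module $L(\widetilde\Psi_{a,\nu},c)$ (which is irreducible by Lemma \ref{ired-ns-1}) is irreducible as a $V_c^+(\mathfrak{ns})$-module. One should note that here the relevant automorphism order is $T=1$ (the module is untwisted) while $o(\sigma)=2$, so $G=\langle\sigma\rangle$ acts genuinely nontrivially; the countable-dimension hypothesis holds since $V_c(\mathfrak{ns})$ is a vertex operator superalgebra. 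I would also remark that $(\widetilde\Psi_{a,\nu},c)$ in the statement is shorthand for $L(\widetilde\Psi_{a,\nu},c)$.

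For part (2), with $a=0$ we are in the case $G=G_M=\langle\sigma\rangle$. Here the 2-cocycle $\alpha_M$ is trivial: the operator $G(1/2)$ already implements $\sigma$ on $M=L(\widetilde\Psi_{0,\nu},c)$ up to scalar — indeed one checks $G(1/2)^2 = L(1)$ acts as the scalar $\nu'$ (more precisely $G(1/2)^2=L(1)$ and $L(1)w=\mu w=0$, so one must instead use a suitable normalization; in any case a square root of the scalar by which an appropriate intertwining operator acts gives an honest order-two lift $\phi(\sigma)$). Thus $\widehat G\cong\Z_2$ and by Theorem \ref{main2} each eigenspace $M^{\lambda}$ of $\phi(\sigma)$ (for $\lambda$ running over the two linear characters of $\Z_2$) is an irreducible $V_c^+(\mathfrak{ns})$-module, and by Theorem \ref{complete-reducibility-1}(3) the two eigenspaces are non-isomorphic. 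It remains to see both eigenspaces are nonzero: the Whittaker vector $w_{0,\nu}$ and $G(-1/2)w_{0,\nu}$ (or any odd vector) lie in different eigenspaces and are nonzero in $M$, so $M=M^{+}\oplus M^{-}$ with both summands nonzero.

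The main obstacle I anticipate is the bookkeeping in part (2): verifying that $\sigma$ lifts to an \emph{order-two} operator on $L(\widetilde\Psi_{0,\nu},c)$ (equivalently that $\alpha_M$ is trivial, not merely a cocycle of order two) and that both $\pm 1$ eigenspaces are nonzero. The cleanest route is probably to exhibit the lift explicitly from the $\mathfrak{ns}$-action — rescaling $G(1/2)$ appropriately when $\mu=0$ so that its square acts as the identity on a generating vector — rather than appeal to the abstract projective-representation machinery; once the honest $\Z_2$-action is in hand, Theorems \ref{main2} and \ref{complete-reducibility-1} do the rest.
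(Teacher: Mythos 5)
Your argument tracks the paper's proof exactly: part (1) observes $\sigma\circ L(\widetilde\Psi_{a,\nu},c)\cong L(\widetilde\Psi_{-a,\nu},c)\not\cong L(\widetilde\Psi_{a,\nu},c)$ for $a\ne 0$ and applies Theorem \ref{main1}, while part (2) observes $\sigma$-stability for $a=0$ and invokes Theorem \ref{complete-reducibility-1}, which is what the paper does. Your added check that both eigenspaces are nonzero in case (2) is a sensible point that the paper leaves implicit.

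One flaw in your justification: the operator $G(1/2)$ does not ``implement $\sigma$ up to scalar'' on $M$ --- it is not even invertible (for $a=0$ it annihilates the Whittaker vector, and $G(1/2)^{-1}Y_M(v,z)G(1/2)$ is not $Y_M(\sigma^{-1}v,z)$). The lift $\phi(\sigma)$ is the abstract linear automorphism supplied by Schur's lemma satisfying the intertwining relation $\phi(\sigma)^{-1}Y_M(v,z)\phi(\sigma)=Y_M(\sigma^{-1}v,z)$; it exists because $\sigma\circ M\cong M$, its square is a scalar, and it can be rescaled to have order two because $H^2(\Z_2,\C^*)$ is trivial (as you note at the end). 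This is already built into Theorems \ref{main2} and \ref{complete-reducibility-1}, so your detour through $G(1/2)$ is both incorrect and unnecessary; dropping it leaves a sound proof.
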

\begin{proof}
   Since $\sigma \circ W (\widetilde \Psi_{a,\nu}, c) =
 Wh (\widetilde \Psi_{-a,\nu}, c)$, we get that $\sigma \circ L  (\widetilde \Psi_{a,\nu}, c) =  L  (\widetilde \Psi_{-a,\nu}, c)  \not \cong   L (\widetilde \Psi_{a,\nu}, c)$ for $a \ne 0$.  Now  the  assertion (1)  follows by applying  Theorem \ref{main1}.

 The case $a = 0$ is different.      Note that $Wh (\widetilde \Psi_{0,\nu}, c)$ is  $\sigma$--invariant,  which implies that  $\sigma \circ L  (\widetilde \Psi_{0,\nu}, c) = L (\widetilde \Psi_{0,\nu}, c)$.
Then Theorem  \ref{complete-reducibility-1} implies that
$$L  (\widetilde \Psi_{0,\nu}, c)  = L^+  (\widetilde \Psi_{0,\nu}, c) \oplus L^-  (\widetilde \Psi_{0,\nu}, c) \, \quad L^{\pm}  (\widetilde \Psi_{0,\nu}, c)   (\Psi, c) = \{ v \in   L^ (\widetilde \Psi_{0,\nu}, c) \ \vert \ \Phi v = \pm v\},$$
where $\Phi$ is a linear isomorphism $L  (\widetilde \Psi_{0,\nu}, c)   \rightarrow L  (\widetilde \Psi_{0,\nu}, c)$. This proves the assertion (2).
\end{proof}

The $N=1$ Ramond algebra is  the Lie superalgebra
$$ \mathfrak{R} = \bigoplus _{i \in {\Z}} {\C} L(i) \oplus \bigoplus _{r  \in {\Z}} {\C} G(r) \oplus {\C} C $$
which satisfies the following commutation relations:
 \begin{eqnarray}
   [L(m), L(n)] &=& (m-n)L(m+n) + \tfrac{1}{12}(m^3-m)\delta_{n+m,0} C \nonumber  \\ \
   [L(m), G(r)] &=&  (\frac{m}{2} -r ) G(m+r)  \nonumber \\ \
   [G(r), G(s) ] &=& 2 L(r+s) +  (r^2-\tfrac{1}{4})\delta_{r+s,0} C \nonumber \\ \
   [\mathfrak{ns}, C] &=& 0 \nonumber
\end{eqnarray}
for all $m,n \in {\Z}$, $r, s \in { \Z}$. The category of weak $\sigma$--twisted $V_{c} (\mathfrak{ns})$--modules coincides with the category of restricted $\mathfrak{R}$--modules of central charge $c$.

Let $$\mathfrak{p}^{tw} = \bigoplus _{i >0 }   L(i) \oplus \bigoplus _{i > 0} {\C} G(i).$$
 Let $\Psi :  \mathfrak{p}^{tw} \rightarrow {\C}$ be the Lie superalgebra homomorphism.
 Assume that $\Psi$ is non-zero. Then $\Psi$ is uniquely determined by  $(\lambda, \mu) \in {\C}^2$, $\mu \cdot \nu \ne 0$ such that
 $\Psi(G(i+1))  =\Psi(L(i+2))= 0$ for each $i \in {\Z}_{>0}$  and
$$ \Psi(L(1))  = \lambda,  \Psi(L(2))  = \mu. $$  Denote such homomorphism by  $\Psi_{\lambda, \mu}$

 Let ${\C} w$ be the $1$-dimensional $\left(\mathfrak{p}^{tw} + {\C} C\right)$--module such that
 $$ x w = \Psi_{\lambda, \mu}(x) w \quad (x \in \mathfrak{p}^{tw}), \quad C w = c w.$$
 The universal Whittaker module associated to Whittaker function $\Psi_{\mu, \nu}$ is defined as
 $$Wh^{tw} (\Psi_{\lambda, \mu}, c) =  U(\mathfrak{R}) \otimes_{U(\mathfrak{p}^{tw} + {\C} C)} {\C} w.$$
%Let  $L ^{tw}(\Psi, c)$ be its simple quotient.

\begin{prop} \label{LPX-ram}  \cite{LPX-ramond}Assume that $\lambda  \cdot \mu  \ne 0$. Then $Wh^{tw} (\Psi_{\lambda, \mu}, c) $ is an irreducible, $\Z_2$--graded $\sigma$--twisted  $V_{c}  (\mathfrak{ns})$--module. \end{prop}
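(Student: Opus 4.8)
The statement to prove is Proposition \ref{LPX-ram}, which asserts that the universal twisted Whittaker module $Wh^{tw}(\Psi_{\lambda,\mu}, c)$ is irreducible and $\Z_2$--graded as a $\sigma$--twisted $V_c(\mathfrak{ns})$--module, under the assumption $\lambda\cdot\mu\neq 0$. Since this result is quoted from \cite{LPX-ramond}, the natural plan is to reproduce the core argument in the language of restricted $\mathfrak{R}$--modules, which is the category equivalent to weak $\sigma$--twisted $V_c(\mathfrak{ns})$--modules.

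\medskip

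\noindent\textbf{Proof proposal.} The plan is to work entirely inside the Ramond algebra $\mathfrak{R}$. First I would record a PBW basis of $Wh^{tw}(\Psi_{\lambda,\mu},c)$ of the form $G(-r_1)\cdots G(-r_a)L(-n_1)\cdots L(-n_b)\, w$, with $r_i$ strictly decreasing positive integers (using $G(r)^2 = L(2r)$ plus central terms to reduce repeated odd modes) and $n_j$ a nonincreasing sequence of positive integers together with possibly a finite power of $L(0)$; here $w$ is the cyclic Whittaker vector annihilated by $\mathfrak{p}^{tw}$ up to the scalars $\Psi_{\lambda,\mu}(L(1))=\lambda$, $\Psi_{\lambda,\mu}(L(2))=\mu$. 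The key structural input is that the operators $L(1)$ and $L(2)$ act on $Wh^{tw}$ in a way that, modulo the span of PBW monomials of strictly lower ``length'', behaves like raising the grading; more precisely I would introduce the standard length/degree filtration by the number and size of negative-mode generators and analyze the leading symbol of the action of $L(1), L(2), G(1)$.

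\medskip

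\noindent\textbf{Key steps in order.} (i) Set up the filtration $F_0 \subseteq F_1 \subseteq \cdots$ on $Wh^{tw}$ where $F_k$ is spanned by PBW monomials of total degree $\le k$ (degree = sum of the absolute values of the negative modes used), and show $\mathrm{gr}\, Wh^{tw}$ is a polynomial-type space. (ii) Given any nonzero submodule $N$, pick $0\neq v\in N$ and write $v$ in PBW form; I would then apply a suitable monomial $\prod L(1)^{\ast}\prod L(2)^{\ast}\prod G(1)^{\ast}$ in the positive modes, chosen to ``peel off'' the leading term of $v$ one generator at a time. The crucial computation is that, because $\lambda\neq 0$ and $\mu\neq 0$, applying $L(1)$ (respectively $L(2)$) to a leading monomial ending in $L(-1)$ (respectively $L(-2)$) returns a nonzero multiple of $w$ plus lower-degree noise, and similarly $G(1)$ acting on a monomial with leading factor $G(-1)$ produces $L(0)$-type terms; iterating, one reduces $v$ to a nonzero scalar multiple of $w$, hence $w\in N$ and $N = Wh^{tw}$. (iii) Finally, for the $\Z_2$--grading: the PBW basis splits by the parity of the number of odd generators $G(-r_i)$ used, $\mathfrak{R}$ preserves this parity (since $G(r)$ is odd and $L(n), C$ are even), and $\C w$ sits in the even part; this gives the $\Z_2$--grading and verifies $v^\sigma_n$ shifts parity correctly, so the module is $\Z_2$--graded in the sense of the earlier definition.

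\medskip

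\noindent\textbf{Main obstacle.} The hard part is step (ii): the bookkeeping needed to show that the ``lower-degree noise'' generated at each stage never cancels the leading term and that one can always choose the next positive-mode operator to strictly decrease the degree of the leading monomial while keeping the coefficient nonzero. This requires a careful choice of a total order on PBW monomials (e.g. lexicographic on the sorted tuple of modes, refined by the number of odd modes) together with the observation that the commutators $[L(1), L(-n)]$, $[L(2), L(-n)]$, $[G(1), G(-r)]$ each lower degree by exactly $1$ or $2$ and have leading coefficients that are nonzero integers (or involve the nonzero scalars $\lambda, \mu$) — so no accidental vanishing occurs. Once this combinatorial lemma is in place, irreducibility follows formally, and one concludes that $Wh^{tw}(\Psi_{\lambda,\mu},c) = L^{tw}(\Psi_{\lambda,\mu},c)$ is the unique irreducible quotient, which as a restricted $\mathfrak{R}$--module is exactly an irreducible $\sigma$--twisted $V_c(\mathfrak{ns})$--module.
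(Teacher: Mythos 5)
The paper does not prove Proposition \ref{LPX-ram}; it is quoted directly from \cite{LPX-ramond} as an external result, so there is no in-paper argument for you to match. Your attempt is therefore a reconstruction of the cited proof, and I will assess it on those terms.

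The general strategy (PBW basis, a degree filtration, peeling off leading terms by acting with positive modes and exploiting $\lambda,\mu\neq 0$) is the right sort of argument for this class of Whittaker irreducibility results. However, there is a concrete gap: your PBW basis
$G(-r_1)\cdots G(-r_a)L(-n_1)\cdots L(-n_b)\,w$ with $r_i$ strictly \emph{positive} integers omits the zero mode $G(0)$. Unlike the Neveu--Schwarz case, where the odd modes live in $\tfrac{1}{2}+\Z$, the Ramond algebra $\mathfrak{R}$ contains $G(0)$, and $G(0)\notin\mathfrak{p}^{tw}=\bigoplus_{i>0}\C L(i)\oplus\bigoplus_{i>0}\C G(i)$. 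Hence $G(0)w$ is a genuinely new basis vector, and the PBW basis of $Wh^{tw}(\Psi_{\lambda,\mu},c)$ is of the form $G(0)^{\epsilon}G(-1)^{\epsilon_1}\cdots L(0)^{k_0}L(-1)^{k_1}\cdots w$ with $\epsilon,\epsilon_i\in\{0,1\}$ and $k_j\ge 0$. This matters in three places. First, for step (ii): with degree defined as the sum of absolute values of the negative modes, the degree-zero component is infinite-dimensional (it contains $L(0)^k w$ and $G(0)L(0)^k w$ for all $k\ge 0$), so the ``peeling'' argument does not by itself terminate at $\C w$; one still needs a reduction within the degree-zero piece, e.g.\ using $L(1)L(0)^k w=\lambda(L(0)+1)^k w$ and subtracting to lower the $L(0)$-degree, and similarly handling the $G(0)$ factor. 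Second, for step (iii): the parity counting must include $G(0)$, which is an odd generator. Third, and perhaps most importantly for how the paper actually uses this proposition, it is precisely the presence of $G(0)$ (which the paper denotes, apparently by a typo, as $G(1)$ in the definition of $\widetilde{\mathfrak{p}}^{tw}$) that allows one to split $Wh^{tw}(\Psi_{\lambda,\mu},c)$ into the two non-$\Z_2$-graded modules $L^{tw}(\widetilde\Psi_{\lambda,\pm b},c)$ in Lemma \ref{ired-R-1}; a proof sketch that never mentions $G(0)$ misses the feature of the Ramond Whittaker module that the whole section hinges on.
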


  Let us write   $\mu = b^2$ for $b \in {\C}$.
Let now  $\widetilde{\mathfrak{p}}^{tw} =  \mathfrak{p}^{tw}  \oplus  {\C} G(1)$.
Let $ \widetilde \Psi_{\lambda,b}: \widetilde{\mathfrak{p}}^{tw} \rightarrow {\C}$ be a Lie superalgebra homomorphism uniquely determined by
 $\widetilde \Psi_{\lambda, b}(G(i+1))  =\widetilde \Psi_{\lambda, b}(L(i+2))= 0$ for each $i \in {\Z}_{>0}$  and
$$ \widetilde \Psi_{\lambda, b} (G(1)) =  b, \widetilde  \Psi(L(2))  = b^2,  \widetilde \Psi(L(1))  = \lambda . $$

 The universal Whittaker module associated to Whittaker function $\widetilde \Psi_{\lambda, b}$ is defined as
 $$Wh^{tw} (\widetilde \Psi_{\lambda, b} , c) =  U(\mathfrak{R}) \otimes_{U(\widetilde{\mathfrak{p}}^{tw} + {\C} C)} {\C} w.$$

 As in the proof of Lemma \ref{ired-ns-1} we get

\begin{lemma}  \label{ired-R-1}  Assume that $\lambda \cdot \mu \ne 0$ and $\mu = b^2$. Then
$$ Wh ^{tw} (\Psi_{\lambda, \mu} , c)  = Wh^{tw} (\widetilde \Psi_{\lambda, b} , c) + Wh^{tw}(\widetilde \Psi_{\lambda, -b} , c),  $$
 and  $ L^{tw}(\widetilde \Psi_{\lambda, b} , c)  = Wh^{tw}(\widetilde \Psi_{\lambda, b} , c) $.
 \end{lemma}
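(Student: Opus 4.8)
We follow the proof of Lemma~\ref{ired-ns-1} almost verbatim, replacing the odd mode $G(\tfrac12)$ by $G(1)$, the Neveu--Schwarz algebra $\mathfrak{ns}$ by the Ramond algebra $\mathfrak{R}$, the category of restricted $\mathfrak{ns}$--modules by that of restricted $\mathfrak{R}$--modules (equivalently, of weak $\sigma$--twisted $V_{c}(\mathfrak{ns})$--modules), and Proposition~\ref{LPX} by Proposition~\ref{LPX-ram}. The structural identity that makes the argument go is that in $\mathfrak{R}$ one has $[G(1),G(1)]=2L(2)+(1-\tfrac14)\delta_{2,0}C=2L(2)$, whence $G(1)^{2}=L(2)$; so on a Whittaker vector $w$ with $L(2)w=\mu w=b^{2}w$ we get $(G(1)-b)(G(1)+b)w=(G(1)^{2}-b^{2})w=0$, exactly as $G(\tfrac12)^{2}=L(1)$ was used in the Neveu--Schwarz case.

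The plan is then as follows. Let $\widetilde L^{tw}(\widetilde\Psi_{\lambda,b},c)$ and $\widetilde L^{tw}(\widetilde\Psi_{\lambda,-b},c)$ be arbitrary nonzero quotients of the two universal modules, with Whittaker vectors $w_{\lambda,b}$ and $w_{\lambda,-b}$, form $W=\widetilde L^{tw}(\widetilde\Psi_{\lambda,b},c)\oplus\widetilde L^{tw}(\widetilde\Psi_{\lambda,-b},c)$, and set $w_{0}=w_{\lambda,b}+w_{\lambda,-b}$, $w_{1}=G(1)w_{0}=b\,w_{\lambda,b}-b\,w_{\lambda,-b}$. Since $\mu\neq0$ forces $b\neq0$, the vectors $w_{0},w_{1}$ are linearly independent, so $U(\mathfrak{R})w_{0}\ni w_{1}$ contains $w_{\lambda,b}$ and $w_{\lambda,-b}$, and hence $W=U(\mathfrak{R})w_{0}$. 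A short computation with the Ramond brackets (using $G(i+1)w_{\lambda,\pm b}=0$ for $i>0$, $[L(1),G(1)]=-\tfrac12 G(2)$, $[L(i+2),G(1)]\in\C G(i+3)$, and $\widetilde\Psi_{\lambda,\pm b}(L(2))=(\pm b)^{2}=\mu$, $\widetilde\Psi_{\lambda,\pm b}(L(1))=\lambda$) shows $w_{0}$ is a Whittaker vector for $\Psi_{\lambda,\mu}$, so $W$ is a quotient of $Wh^{tw}(\Psi_{\lambda,\mu},c)$. Proposition~\ref{LPX-ram} (irreducibility of $Wh^{tw}(\Psi_{\lambda,\mu},c)$) then forces $W\cong Wh^{tw}(\Psi_{\lambda,\mu},c)$, and a PBW count ($\dim_{q}Wh^{tw}(\Psi_{\lambda,\mu},c)=2\dim_{q}Wh^{tw}(\widetilde\Psi_{\lambda,b},c)$, the factor $2$ coming precisely from the extra mode $G(1)$) forces each $\widetilde L^{tw}(\widetilde\Psi_{\lambda,\pm b},c)$ to equal the full universal module. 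Since the quotients were arbitrary, this gives $Wh^{tw}(\widetilde\Psi_{\lambda,b},c)=L^{tw}(\widetilde\Psi_{\lambda,b},c)$; taking the quotients to be the universal modules themselves yields the displayed sum decomposition $Wh^{tw}(\Psi_{\lambda,\mu},c)=Wh^{tw}(\widetilde\Psi_{\lambda,b},c)+Wh^{tw}(\widetilde\Psi_{\lambda,-b},c)$.

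The one step needing care — and it already arises in Lemma~\ref{ired-ns-1} — is that Proposition~\ref{LPX-ram} provides irreducibility of $Wh^{tw}(\Psi_{\lambda,\mu},c)$ only as a $\mathbb{Z}_{2}$--graded module, whereas $W$ is not $\mathbb{Z}_{2}$--gradable as an abstract module. One repairs this by noting that $W$ nonetheless carries a $\mathbb{Z}_{2}$--grading with $\sigma$ interchanging the two summands (via $\sigma\circ Wh^{tw}(\widetilde\Psi_{\lambda,b},c)\cong Wh^{tw}(\widetilde\Psi_{\lambda,-b},c)$), that the Whittaker generator $w_{0}$ is then even, and hence that the surjection $Wh^{tw}(\Psi_{\lambda,\mu},c)\twoheadrightarrow W$ is $\sigma$--equivariant; its kernel is thus a $\sigma$--stable, hence $\mathbb{Z}_{2}$--graded, submodule of $Wh^{tw}(\Psi_{\lambda,\mu},c)$, so it vanishes by $\mathbb{Z}_{2}$--graded irreducibility. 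Apart from this point the argument is mechanical once the Ramond bracket identities are in hand; the bulk of the (entirely routine) labour is checking that $w_{0}$ is a genuine $\Psi_{\lambda,\mu}$--Whittaker vector.
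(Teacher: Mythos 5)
Your proof is correct and is essentially the same argument as the paper's, which for this lemma literally just says ``as in the proof of Lemma~\ref{ired-ns-1}'': you correctly transport the Neveu--Schwarz computation to the Ramond setting, using $G(1)^2=L(2)$ in place of $G(\tfrac12)^2=L(1)$, and replicate the same structure (form $w_0=w_{\lambda,b}+w_{\lambda,-b}$, observe it is a $\Psi_{\lambda,\mu}$--Whittaker vector generating the sum, and invoke Proposition~\ref{LPX-ram}). The one point you make explicit that the paper leaves tacit, both here and already in Lemma~\ref{ired-ns-1}, is that Proposition~\ref{LPX-ram} gives only $\mathbb{Z}_2$--graded irreducibility, so one must check that the kernel of $Wh^{tw}(\Psi_{\lambda,\mu},c)\twoheadrightarrow W$ is $\sigma$--stable; your repair via the $\sigma$--grading on $W$ swapping the two summands is the right one.
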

 Now applying Theorems \ref{main1}  and    \ref{complete-reducibility-1} we get:
 \begin{prop}
\item[(1)] If $b \ne 0$, then  $L^{tw} (\widetilde \Psi_{\lambda,b}, c)$ is an irreducible $V_{c} ^+(\mathfrak{ns})$--module.
\item[(2)]  $L^{tw} (\widetilde \Psi_{\lambda,0}, c)$  is a direct sum of two non-isomorphic irreducible modules for $V_{c} ^+(\mathfrak{ns})$.
 \end{prop}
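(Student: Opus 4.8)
The plan is to follow verbatim the structure of the proof just given for the Neveu--Schwarz case, substituting $Wh^{tw}(\widetilde \Psi_{\lambda,b},c)$ for $Wh(\widetilde \Psi_{a,\nu},c)$, invoking Lemma \ref{ired-R-1} in place of Lemma \ref{ired-ns-1}, and working throughout with weak $\sigma$--twisted $V_c(\mathfrak{ns})$--modules (equivalently, restricted $\mathfrak{R}$--modules) and the orbifold $V_c^+(\mathfrak{ns}) = V_c(\mathfrak{ns})^{\langle\sigma\rangle}$. By Lemma \ref{ired-R-1}, $M := L^{tw}(\widetilde \Psi_{\lambda,b},c) = Wh^{tw}(\widetilde \Psi_{\lambda,b},c)$ is an irreducible weak $\sigma$--twisted $V_c(\mathfrak{ns})$--module, so, taking $G = \langle\sigma\rangle$ and $h = g = \sigma$ (which lies in $Z(\langle\sigma\rangle)$, with $T = 2$), both Theorem \ref{main1} and Theorem \ref{complete-reducibility-1} are available for $M$.

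For part (1), the first step is to identify $\sigma\circ M$. Since $\sigma$ acts on $\mathfrak{R}$ by $L(n)\mapsto L(n)$ and $G(r)\mapsto -G(r)$, the definition $Y_{\sigma\circ M}(u,z) = Y_M(\sigma^{-1}u,z)$ shows that a Whittaker vector for $\widetilde \Psi_{\lambda,b}$ in $M$ is a Whittaker vector for $\widetilde \Psi_{\lambda,-b}$ in $\sigma\circ M$ --- the eigenvalue of $G(1)$ changes sign while those of $L(1)$ and $L(2)$ are unchanged and $(-b)^2 = b^2$ --- hence $\sigma\circ Wh^{tw}(\widetilde \Psi_{\lambda,b},c)\cong Wh^{tw}(\widetilde \Psi_{\lambda,-b},c)$ and, passing to simple quotients, $\sigma\circ L^{tw}(\widetilde \Psi_{\lambda,b},c)\cong L^{tw}(\widetilde \Psi_{\lambda,-b},c)$. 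When $b\ne 0$ the two Whittaker functions differ on $G(1)$, and irreducible Whittaker modules with distinct Whittaker functions are non-isomorphic (the standard uniqueness of the Whittaker function, as used in \cite{LPX-ramond}); thus $M\not\cong g\circ M$ for the unique nontrivial $g\in\langle\sigma\rangle$, and Theorem \ref{main1} gives that $M$ is irreducible as a $V_c^+(\mathfrak{ns})$--module.

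For part (2), when $b = 0$ the function $\widetilde \Psi_{\lambda,0}$ vanishes on $G(1)$, the only generator on which $\sigma$ acts nontrivially, so it is $\sigma$--fixed; consequently $Wh^{tw}(\widetilde \Psi_{\lambda,0},c)$ and its simple quotient are $\sigma$--invariant, i.e. $\sigma\circ L^{tw}(\widetilde \Psi_{\lambda,0},c)\cong L^{tw}(\widetilde \Psi_{\lambda,0},c)$. Then Theorem \ref{complete-reducibility-1} (with $g=\sigma$, $T=2$) decomposes $L^{tw}(\widetilde \Psi_{\lambda,0},c) = L^+ \oplus L^-$ into the two eigenspaces of the linear isomorphism $\phi(\sigma)$, each an irreducible weak $V_c^+(\mathfrak{ns})$--module, with $L^+\not\cong L^-$ as $V_c^+(\mathfrak{ns})$--modules by part (3) of that theorem; one checks both summands are nonzero, since otherwise the odd part $V_c(\mathfrak{ns})^-$ would annihilate $M$, contradicting that the generators $G(r)$, $r\in\Z$, act nontrivially on the irreducible $\sigma$--twisted module $M$.

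The computations are short; the only point that needs care is the bookkeeping in the first step of part (1) --- tracking how the sign change on the odd generators of $\mathfrak{R}$ interacts with the conjugation $Y_{\sigma\circ M}(u,z)=Y_M(\sigma^{-1}u,z)$ and with the squared eigenvalue in the relation for $L(2)$ --- together with the standard but essential fact that distinct Whittaker functions yield non-isomorphic irreducible modules in the Ramond sector, and, in part (2), the non-triviality of both eigenspaces of $\phi(\sigma)$.
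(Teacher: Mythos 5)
Your proposal is correct and follows the same route as the paper. The paper disposes of this Ramond proposition in one line ("Now applying Theorems~\ref{main1} and~\ref{complete-reducibility-1}\dots"), relying on the reader to transpose the preceding Neveu--Schwarz argument to the $\sigma$--twisted setting --- which is exactly what you did: you compute $\sigma\circ Wh^{tw}(\widetilde\Psi_{\lambda,b},c)\cong Wh^{tw}(\widetilde\Psi_{\lambda,-b},c)$ via the sign flip on $G(1)$, distinguish the cases $b\ne 0$ (apply Theorem~\ref{main1} with $G=\langle\sigma\rangle$, $h=\sigma$) and $b=0$ (the module is $\sigma$--stable, apply Theorem~\ref{complete-reducibility-1}). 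Your extra check that both eigenspaces of $\phi(\sigma)$ are nonzero when $b=0$ is a sensible addition the paper leaves implicit, and it goes through along the lines you indicate (if $V^1$ annihilated $M$ then commutators such as $[G(0),G(1)]=2L(1)$ would force $\lambda=0$, contradicting that $\widetilde\Psi_{\lambda,0}$ is a nontrivial Whittaker function).
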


\section{  Application to permutation orbifold
{
of Heisenberg vertex algebras}
}
{
In this section, we discuss an application to permutation orbifold
of Heisenberg vertex algebra and give a proof for \cite[Conjecture  8.2]{ALPY}.
}

 We first recall the  definition of Whittaker modules for the Heisenberg vertex algebra $M(1)$ of rank $\ell$ from \cite[Section 8]{ALPY}.
  Let  $\h$ be a complex $\ell$-dimensional vector space with a non-degenerate form $(\cdot, \cdot)$ equipped with the structure of a commutative Lie algebra.  Let $M(1)$ be the Heisenberg vertex operator algebra of rank $\ell$ associated to the Heisenberg algebra ${\hh} = {\h} \otimes {\C}[t,t ^{-1}] + {\C} K$.  Then the automorphism group  of $M(1)$ is isomorphic to the orthogonal group $O(\ell)$. The symmetric group $S_{\ell}$ is a  subgroup of $O(\ell)$
{
and acts on $M(1)$ as permutations on an orthonormal basis of $\mathfrak{h}$.
}

  Let $\n =  {\h} \otimes {\C}[t]  $ be its nilpotent Lie  subalgebra.
  Define the Whittaker function $\bm{\lambda} \in  \n ^*$ such that for each $h \in {\h}$  $$\bm{\lambda} (h(n)) = 0  \quad \mbox{for} \ n >>0.$$
  Let $M(1, \bm{\lambda} )$ be  the standard Whittaker modules for ${\hh}$ of level $1$ associated to the Whittaker function $\bm{\lambda}$.

   For $g \in O(\ell)$ we define:
  $$ g \circ  M(1,\bm{\lambda} )  = M(1,   g \circ\bm{\lambda}). $$

\begin{thm} \label{slutnja-1} Let $G$ be any finite subgroup of $O(\ell)$ such that
$$    g \circ \bm{\lambda}   \ne \bm{\lambda},  \quad \forall g \in G. $$
Then $M(1, \bm{\lambda}) $ is irreducible $M(1) ^G$--module.
\end{thm}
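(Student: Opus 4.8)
The plan is to deduce Theorem~\ref{slutnja-1} from Theorem~\ref{main1} by checking its hypotheses for the pair $(M(1), G)$ with $M = M(1,\bm{\lambda})$. Since $S_\ell$ (and hence any finite $G \le O(\ell)$) acts by vertex algebra automorphisms fixing the conformal vector, and $h = 1$ is trivially central in $G$, the twisted structure degenerates to the untwisted case: $M(1,\bm{\lambda})$ is an honest (weak, untwisted) $M(1)$--module. There are really two things to verify: first, that $M(1,\bm{\lambda})$ is irreducible as an $M(1)$--module; second, that $g \circ M(1,\bm{\lambda}) \not\cong M(1,\bm{\lambda})$ for every $g \in G \setminus \{1\}$.

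First I would recall from \cite[Section 8]{ALPY} (or re-derive) that the standard Whittaker module $M(1,\bm{\lambda})$ for $\hh$ of level $1$ is irreducible whenever $\bm{\lambda}$ is a nonzero Whittaker function --- more precisely, irreducibility of Whittaker modules for the Heisenberg algebra is governed by the ``stable part'' of $\bm{\lambda}$, and the standard construction gives an irreducible module; this is exactly the setting of \cite{ALPY}. Since weak $M(1)$--modules are the same as restricted $\hh$--modules of level $1$, irreducibility as $\hh$--module gives irreducibility as weak $M(1)$--module. Also $M(1)$ has countable dimension (it is a vertex operator algebra), so Lemma~\ref{countable dimensional} and Schur's lemma apply.

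Next, the key step is the non-isomorphism $g \circ M(1,\bm{\lambda}) \cong M(1, g\circ\bm{\lambda}) \not\cong M(1,\bm{\lambda})$ for $g \ne 1$. By the hypothesis of the theorem, $g \circ \bm{\lambda} \ne \bm{\lambda}$ as elements of $\n^*$, so I must show that distinct Whittaker functions yield non-isomorphic Whittaker modules. The clean way: a module isomorphism $\phi: M(1, g\circ\bm{\lambda}) \to M(1,\bm{\lambda})$ would send a Whittaker vector (a cyclic vector annihilated by the positive part modulo the Whittaker function) to a Whittaker vector with the \emph{same} eigenvalues; but the Whittaker function of a Whittaker module is an invariant of the module (for each $h(n)$ with $n$ large, the scalar by which it acts on the appropriate cyclic vector is determined), hence $g\circ\bm{\lambda}$ and $\bm{\lambda}$ must agree on the stable part. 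One must be slightly careful that it is genuinely the function on all of $\n$, not just the stable part, that distinguishes the isomorphism classes --- here I would invoke the precise classification in \cite{ALPY}, or note that since $g \in O(\ell)$ acts linearly and $g\circ\bm{\lambda} \ne \bm{\lambda}$ means they differ on some $h(n)$, a weight/grading argument pins down the discrepancy. I expect this verification --- that the isomorphism class of the standard Whittaker module remembers enough of $\bm{\lambda}$ to separate the $G$--orbit --- to be the main (though routine, given \cite{ALPY}) obstacle.

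Finally, with both hypotheses of Theorem~\ref{main1} in hand (countable dimension of $V = M(1)$; $G$ finite; $h = 1 \in Z(G)$; $M(1,\bm{\lambda})$ irreducible weak $M(1)$--module with $M(1,\bm{\lambda}) \not\cong g\circ M(1,\bm{\lambda})$ for all $g \in G\setminus\{1\}$), Theorem~\ref{main1} directly yields that $M(1,\bm{\lambda})$ is irreducible as an $M(1)^G$--module, which is the assertion. I would close by remarking that since $M(1)^{S_\ell}$ is a finitely generated $W$--algebra, this produces new families of irreducible Whittaker modules for $W$--algebras, as advertised in the introduction.
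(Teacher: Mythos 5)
Your proposal is correct and follows essentially the same route as the paper: interpret $M(1,\bm{\lambda})$ as an irreducible weak $M(1)$--module, use $g\circ M(1,\bm{\lambda})\cong M(1,g\circ\bm{\lambda})$ together with the fact that distinct Whittaker functions give non-isomorphic modules (from \cite{ALPY}) to verify $g\circ M\not\cong M$ for $g\ne 1$, and then invoke Theorem~\ref{main1} with $h=1$. The paper's own proof is a one-line appeal to the same facts; your extra care about why the Whittaker function is a complete isomorphism invariant is a reasonable elaboration, not a deviation.
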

\begin{proof}
The proof follows by applying Theorem \ref{main1}  on modules $ M(1,{g\circ \bm\lambda} ) $ and using the fact that all Whittaker functions $ g\circ \bm{\lambda}  $ are different for $g \in G$.
\end{proof}

As a consequence we prove the following result which was stated as Conjecture 8.2 in \cite{ALPY}.

\begin{coro} Assume that $\sigma\circ\bm{\lambda}\ne\bm{\lambda}$
for any $2$-cycle $\sigma\in S_{\ell}$. Then $M(1,\boldsymbol{\lambda})$
is an irreducible $M(1)^{S_{\ell}}$--module. \end{coro}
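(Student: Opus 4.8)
The plan is to derive this corollary as a direct application of Theorem~\ref{slutnja-1} by choosing $G = S_{\ell}$, the full symmetric group embedded in $O(\ell)$ as permutations of an orthonormal basis of $\mathfrak{h}$. The only thing that needs checking is the hypothesis of Theorem~\ref{slutnja-1}, namely that $g\circ\bm{\lambda}\ne\bm{\lambda}$ for every $g\in S_{\ell}\setminus\{1\}$, not merely for transpositions. So the heart of the argument is the elementary group-theoretic observation that if a permutation $g\in S_{\ell}$ fixes $\bm{\lambda}$ then so does every transposition appearing in $g$ --- or, contrapositively, that $\mathrm{Stab}_{S_{\ell}}(\bm{\lambda})$ is generated by the transpositions it contains.

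First I would unwind what $g\circ\bm{\lambda}=\bm{\lambda}$ means concretely. Fix an orthonormal basis $h_1,\dots,h_\ell$ of $\mathfrak{h}$; then $\bm{\lambda}$ is determined by the scalars $\bm{\lambda}(h_i(n))$ for $i=1,\dots,\ell$ and $n\ge 0$ (all but finitely many being zero), and $g\in S_\ell$ acts by $(g\circ\bm{\lambda})(h_i(n)) = \bm{\lambda}(h_{g^{-1}(i)}(n))$. Thus $g\circ\bm{\lambda}=\bm{\lambda}$ if and only if the "weight sequence" $\big(\bm{\lambda}(h_i(n))\big)_{n\ge 0}$ attached to index $i$ equals the one attached to $g(i)$, for every $i$. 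In other words, $g$ stabilizes $\bm{\lambda}$ precisely when $g$ permutes among themselves the blocks of indices sharing a common weight sequence. Consequently $\mathrm{Stab}_{S_\ell}(\bm{\lambda})$ is a Young subgroup $S_{B_1}\times\cdots\times S_{B_r}$, where the $B_j$ are exactly these blocks, and any such group is generated by transpositions lying inside a single block, each of which also fixes $\bm{\lambda}$.

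Now I would run the contrapositive: suppose $\sigma\circ\bm{\lambda}=\bm{\lambda}$ fails for every $2$-cycle $\sigma$, i.e. every transposition moves $\bm{\lambda}$. By the previous paragraph this forces all blocks $B_j$ to be singletons, so $\mathrm{Stab}_{S_\ell}(\bm{\lambda})=\{1\}$; equivalently $g\circ\bm{\lambda}\ne\bm{\lambda}$ for all $g\in S_\ell\setminus\{1\}$. Hence the hypothesis of Theorem~\ref{slutnja-1} is satisfied with $G=S_\ell$, and that theorem immediately yields that $M(1,\bm{\lambda})$ is irreducible as an $M(1)^{S_\ell}$--module.

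I do not anticipate a genuine obstacle here: the representation-theoretic content is entirely carried by Theorem~\ref{slutnja-1} (hence ultimately by Theorem~\ref{main1}), and what remains is only the bookkeeping identifying $\mathrm{Stab}_{S_\ell}(\bm{\lambda})$ as a Young subgroup generated by its transpositions. The one point to state carefully is the passage from "no transposition fixes $\bm{\lambda}$" to "no nontrivial permutation fixes $\bm{\lambda}$"; this is exactly the fact that a Young subgroup is trivial as soon as it contains no transposition, which is where the embedding $S_\ell\hookrightarrow O(\ell)$ via coordinate permutations (rather than some twisted embedding) is used. If one wanted to avoid even this short argument, one could alternatively invoke Theorem~\ref{slutnja-1} blockwise on each $S_{B_j}$, but stating the clean singleton-block reduction is the most transparent route.
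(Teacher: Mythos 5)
Your proof is correct and takes exactly the route the paper intends: the corollary is stated immediately after Theorem~\ref{slutnja-1} with no separate argument, the implicit step being precisely your observation that $\mathrm{Stab}_{S_\ell}(\bm{\lambda})$ is a Young subgroup and is therefore trivial once it contains no transposition. You have simply made explicit the short group-theoretic reduction that the paper leaves to the reader.
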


\begin{rem}
We believe that $M(1,\boldsymbol{\lambda})$ is  also irreducible for affine $W$-algebras realized as subalgebras of $M(1)^{S_{\ell}}$. But our current  techniques are not sufficient to prove this claim.
\end{rem}

\section{ Permutation orbifold of Virasoro vertex operator algebra}

 Let $Vir$ be the Virasoro Lie algebra with generators $L(n)$, $n \in \Z$, and the central element $C$. Let $Vir^{c}$ be the universal Virasoro vertex operator algebra of central charge $c$. Let $Vir^+$ be the Lie subalgebra of $Vir$ generated by $L(n)$, $n \in {\Z}_{>0}$.

  The classical Whittaker module for the Virasoro algebra  of central charge $c$ is generated by the Whittaker function $\Psi: Vir^+ \rightarrow {\C}$ such that
  $$ \Psi(L(1)) = a, \Psi(L(2)) = b,   \Psi(L(n)) = 0 \quad (n >3). $$
  Let $\text{Wh}_c (\Psi)$  be the universal Whittaker Virasoro module of central charge $c$. Then  $\text{Wh}_c (\Psi)$ is irreducible \cite{LZ}. Since $\text{Wh}_c (\Psi)$ is a restricted $Vir$--module, it is an irreducible module for the $Vir^c$.  Two Whittaker modules $\text{Wh}_c (\Psi_i)$, $i=1, 2$, are isomorphic if and only if Whittaker functions coincide, i.e., $\Psi_1 = \Psi_2$.

Now let $U = Vir^c$, and consider tensor product vertex operator algebra $V = U^{\otimes \ell}$ for $\ell \in {\Z}_{>0}$. Then the symmetric group $S_{\ell}$ acts naturally on $V$
 as a group of automorphisms. Indeed, $\mathrm{Aut}(V) \cong S_{\ell}$ in this case.
Let $G$ be any subgroup of  $S_{\ell}=\mathrm{Aut}(V)$.
By applying Theorem \ref{main1},  we get:
\begin{thm} Assume that   $\Psi_i : Vir^+ \rightarrow {\C}$, $i =1, \dots, \ell$  are Whittaker functions such that
$$ \Psi_i(L(1)) = a_i, \Psi_i(L(2)) = b_i,   \Psi_i(L(n)) = 0 \quad (n >3). $$
Assume that for each $g \in G$:
$$ (a_{g(1)}, \cdots, a_{g(\ell)}) \ne (a_1, \dots, a_{\ell}) \quad \mbox{or} \quad  (b_{g(1)}, \cdots, b_{g(\ell)}) \ne (b_1, \dots, b_{\ell}). $$
Then
$$ \text{Wh}(\Psi_1, \dots, \Psi_{\ell}) :=\text{Wh}_{c} (\Psi_1) \otimes \cdots  \otimes \text{Wh}_{c} (\Psi_{\ell})$$
is an irreducible $V^G$--module.
\end{thm}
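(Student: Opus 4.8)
The plan is to deduce this theorem directly from Theorem~\ref{main1} applied to the tensor product vertex operator algebra $V = (Vir^c)^{\otimes \ell}$ with the (untwisted) module $\text{Wh}(\Psi_1,\dots,\Psi_\ell) = \text{Wh}_c(\Psi_1)\otimes\cdots\otimes\text{Wh}_c(\Psi_\ell)$, taking $h = 1 \in Z(G)$. The two things that must be checked before invoking Theorem~\ref{main1} are: (i) that $\text{Wh}(\Psi_1,\dots,\Psi_\ell)$ is an irreducible weak $V$--module, and (ii) that $g\circ \text{Wh}(\Psi_1,\dots,\Psi_\ell) \not\cong \text{Wh}(\Psi_1,\dots,\Psi_\ell)$ for every $g\in G\setminus\{1\}$.

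First I would establish (i). Each factor $\text{Wh}_c(\Psi_i)$ is an irreducible restricted $Vir$--module, hence an irreducible weak $Vir^c$--module, by \cite{LZ}. Since $Vir^c$ has countable dimension, irreducibility of a tensor product of irreducible weak modules over a tensor product vertex algebra holds — this is the standard fact that $\text{Wh}_c(\Psi_1)\otimes\cdots\otimes\text{Wh}_c(\Psi_\ell)$ is irreducible as a module for $(Vir^c)^{\otimes\ell}$ (equivalently, as a module for the direct sum Lie algebra $Vir^{\oplus\ell}$, acting through the universal enveloping algebra), which follows from the countable-dimensional version of Schur's Lemma quoted in the excerpt applied factor by factor. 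So $\text{Wh}(\Psi_1,\dots,\Psi_\ell)$ is an irreducible weak $V$--module.

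Next I would handle (ii). For $g\in S_\ell$, the action of $g$ on $V = U^{\otimes\ell}$ permutes the tensor factors, so $g\circ\big(\text{Wh}_c(\Psi_1)\otimes\cdots\otimes\text{Wh}_c(\Psi_\ell)\big) \cong \text{Wh}_c(\Psi_{g^{-1}(1)})\otimes\cdots\otimes\text{Wh}_c(\Psi_{g^{-1}(\ell)})$. Using the cited fact that two Whittaker Virasoro modules $\text{Wh}_c(\Psi)$ and $\text{Wh}_c(\Psi')$ are isomorphic if and only if $\Psi=\Psi'$, together with the uniqueness of the tensor decomposition into irreducibles, we get $g\circ\text{Wh}(\Psi_1,\dots,\Psi_\ell)\cong\text{Wh}(\Psi_1,\dots,\Psi_\ell)$ if and only if $\Psi_{g(i)}=\Psi_i$ for all $i$, i.e. $(a_{g(1)},\dots,a_{g(\ell)})=(a_1,\dots,a_\ell)$ and $(b_{g(1)},\dots,b_{g(\ell)})=(b_1,\dots,b_\ell)$. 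The hypothesis of the theorem is precisely the negation of this for every $g\in G\setminus\{1\}$, so condition (ii) holds.

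With (i) and (ii) verified, Theorem~\ref{main1} (applied with the finite subgroup $G\le\mathrm{Aut}(V)$ and $h=1$) immediately yields that $\text{Wh}(\Psi_1,\dots,\Psi_\ell)$ is irreducible as a $V^G$--module, which is the claim. The only genuinely nontrivial input is the irreducibility of the tensor product as a weak module over the tensor product vertex algebra; everything else is bookkeeping about how $S_\ell$ permutes factors and the isomorphism criterion for Virasoro Whittaker modules. I expect the main obstacle — if any — to be making the tensor-product irreducibility argument clean in the weak (non-admissible, non-graded) setting, but this is already implicit in the framework developed in the paper and in \cite{ALPY}, so it can be stated with a brief justification rather than a full proof.
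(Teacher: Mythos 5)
Your proposal is correct and follows exactly the route the paper takes: the paper simply says ``By applying Theorem \ref{main1}'' and leaves the verification of its hypotheses (irreducibility of the tensor product as a weak $(Vir^c)^{\otimes\ell}$--module via countable-dimensional Schur, and non-isomorphism under each $g\in G\setminus\{1\}$ via the isomorphism criterion for Virasoro Whittaker modules) implicit, whereas you spell them out. No gap; same argument, just more detailed.
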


%{\color{blue} Maybe we can add some references on permutation orbifolds of Virasoro, Milas-Penn? Then as a consequence, we get Whittaker modules for $W$-algebras.}{\color{red}
The $\mathbb{Z}_3$ and $S_3$-fixed point vertex subalgebras of the tensor product of three copies of the universal Virasoro vertex operator algebras have been studied in \cite{MPS1, MPS2}. It is proved in \cite{MPS1} that for all but finitely many central charges $\left(\mathcal (Vir^{c})^{\otimes 3}\right)^{\mathbb Z_3}$ is a W-algebra of the type $(2,4,5, 6^3, 7, 8^3, 9^3, 10^2)$.
%}

\section{Application to the Heisenberg-Virasoro vertex algebra}
%{\color{blue} In progress}
Recall first the definitions of Heisenberg-Virasoro Lie algebra and mirror twisted Heisenberg-Virasoro Lie algebra.

		The  {\bf  Heisenberg-Virasoro  algebra} $\mathcal H $ is a Lie algebra with a basis
		$$\left\{L(m),h(r), C_1, C_2, C_3, m, r\in{   \Z}\right\}$$
		and commutation relations:
 \begin{eqnarray}
   [L(m), L(n)] &=& (m-n)L(m+n) + \tfrac{1}{12}(m^3-m)\delta_{n+m,0} C_1 \nonumber  \\
  \  [L(m), h(r)] &=&  rh(m+r)+\delta_{m+r,0}(m^2+m) C_2 \nonumber \\
 \    [h(r), h(s) ] &=& r \delta_{r+s, 0} C_3 \nonumber \\
 \   [\mathcal H, C_i] &=& 0 \nonumber \quad i=1,2,3;
\end{eqnarray}
for all $m,n \in {\Z}$, $r, s \in { \Z}$.

The  {\bf   mirror twisted Heisenberg-Virasoro  algebra} $\mathcal H^{tw} $ is a Lie algebra with a basis
		$$\left\{L(m),h(r), \bar C_1, \bar C_2, m, \in{   \Z}, r \in \tfrac{1}{2}+ {\Z} \right\}$$
 and commutation relations:
 \begin{eqnarray}
   [L(m), L(n)] &=& (m-n)L(m+n) + \tfrac{1}{12}(m^3-m)\delta_{n+m,0} \bar C_1 \nonumber  \\
\   [L(m), h(r)] &=&  rh_{m+r}  \nonumber \\
\     [h(r), h(s) ] &=& r \delta_{r+s, 0} \bar C_2 \nonumber \\
\    [\mathcal H, \bar C_i] &=& 0 \nonumber \quad i=1,2;
\end{eqnarray}
for all $m,n \in {\Z}$, $r, s \in \tfrac{1}{2} + { \Z}$.

 Restricted modules (also called smooth modules)  of nonzero level for the mirror Heisenberg-Virasoro   algebra can be treated as  weak twisted modules for the Heisenberg-Virasoro vertex algebras, and restricted modules  of nonzero level  for the Heisenberg-Virasoro   algebra can be treated as  weak modules for the Heisenberg-Virasoro vertex algebras.

 	Let $\mathcal P$ be the subalgebra of $\mathcal H$ spanned by
	$$ C_1, C_2, C_3, L(m),  h(m) , \quad (m \in {\Z}_{\ge 0}). $$
	Let $(\ell_1, \ell_2, \ell_3) \in {\C} ^3$. Consider the $1$-dimensional $\mathcal P$--module ${\C} v $ such that
	$$ C_i  v = \ell_i v, \ i=1,2,3;  \quad   L(m) v   = h(m) v  = 0 \quad (m \in {\Z}_{\ge 0}). $$
	Let $\mathcal V^{\ell_1, \ell_2, \ell_3}$ be the following induced $\mathcal H$--module:
$$\mathcal V^{\ell_1, \ell_2, \ell_3 } = U( \mathcal H ) \otimes_{ U(\mathcal P)} {\C} v . $$	
Then $\mathcal V^{\ell_1, \ell_2, \ell_3}$ is a highest weight $\mathcal H$--module, with the highest weight vector ${\bf 1}  =1 \otimes v$. And $\mathcal V^{\ell_1, \ell_2, \ell_3}$  is a vertex algebra generated by fields
$$ h(z) =   \sum_{r\in {\Z}} h(r) z^{-r-1}, L(z) = \sum_{n\in {\Z}} L(n) z^{-n-2}.$$
This algebra is called the Heisenberg-Virasoro vertex algebra.
Assume that $\ell_3 \ne 0$. It is well known that $M(\ell_3) \cong M(1)$. So it is enough to consider  $\mathcal V^{\ell_1, \ell_2, 1}$. Let $\omega_{a}= \frac{1}{2} h(-1) ^2 + a h(-2)$ be the Virasoro vector in $M(1)$ with central charge $c= 1-12 a^2$. Denote by  $M(1)_a$  the vertex operator algebra $M(1)$ with conformal vector $\omega_a$. We write $M(1)$ for $M(1)_0$ when we choose canonical Virasoro vector $\omega_0 = \frac{1}{2} h(-1) ^2 $ of central charge $c=1$.

The
following theorem is a vertex-algebraic version of the results from \cite{ACKP} (see also
\cite{AJR}, \cite{GuW}):
\begin{thm}
$$ \mathcal V^{\ell_1, \ell_2, 1} \cong M(1)_{\ell_2} \otimes Vir^{c},$$
where  $Vir^{c}$ is the universal Virasoro vertex algebra with conformal vector
$ \omega^{Vir} =  L(-2) - \omega_{\ell_2}$ of central charge $c = \ell_1-1 + 12 \ell_2^2$.
\end{thm}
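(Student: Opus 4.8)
The plan is to exhibit an explicit vertex algebra isomorphism between $\mathcal V^{\ell_1,\ell_2,1}$ and the tensor product $M(1)_{\ell_2}\otimes Vir^c$ by decoupling the Heisenberg and Virasoro parts inside $\mathcal H$. First I would observe that $\mathcal V^{\ell_1,\ell_2,1}$ is strongly generated by the two fields $h(z)$ and $L(z)$, so it suffices to produce inside $\mathcal V^{\ell_1,\ell_2,1}$ a commuting pair consisting of (i) a rank-one Heisenberg subalgebra with the correct level and a deformed Virasoro element $\omega_{\ell_2}=\tfrac12 h(-1)^2+\ell_2 h(-2)$, and (ii) a Virasoro element $\omega^{Vir}:=L(-2)-\omega_{\ell_2}$ of central charge $c=\ell_1-1+12\ell_2^2$ that commutes with $h(z)$. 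Concretely, I would define $\widetilde L(z)=Y(\omega^{Vir},z)=\sum \widetilde L(n)z^{-n-2}$ where $\widetilde L(n)=L(n)-(\omega_{\ell_2})_{n+1}$, and check by direct $n$-th product computations (using the Borcherds commutator formula applied to the known brackets of $\mathcal H$) that $[\widetilde L(m),h(r)]=0$ for all $m,r$, that $\{\widetilde L(m)\}$ closes into a Virasoro algebra, and that its central charge is $c=\ell_1-1+12\ell_2^2$.

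The key computational steps, in order, are: compute $[\,(\omega_{\ell_2})_{m+1},h(r)\,]$ and verify it equals $[L(m),h(r)]=r\,h(m+r)$ (this is precisely the statement that $h(z)$ is primary of weight $1$ for the deformed Virasoro element, which uses the $\delta_{m+r,0}(m^2+m)C_2=\delta_{m+r,0}(m^2+m)\ell_2$ term in $\mathcal H$ matching the anomaly produced by $a h(-2)$ with $a=\ell_2$); then compute $[\widetilde L(m),\widetilde L(n)]$ and isolate the central term, showing it is $\tfrac1{12}(m^3-m)\delta_{m+n,0}(\ell_1-1+12\ell_2^2)$; and finally observe $[\widetilde L(m),(\omega_{\ell_2})_{n+1}]=0$ and hence $\widetilde L$ commutes with the whole Heisenberg--deformed-Virasoro subalgebra $M(1)_{\ell_2}$. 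Once these bracket identities hold at the level of modes, the decomposition follows: the subalgebra generated by $h$ and $\omega_{\ell_2}$ is a quotient of $M(1)_{\ell_2}$ and the subalgebra generated by $\omega^{Vir}$ is a quotient of $Vir^c$; since $L(-2)=\omega_{\ell_2}+\omega^{Vir}$, these two subalgebras together strongly generate $\mathcal V^{\ell_1,\ell_2,1}$, giving a surjection $M(1)_{\ell_2}\otimes Vir^c\twoheadrightarrow \mathcal V^{\ell_1,\ell_2,1}$.

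To upgrade the surjection to an isomorphism I would compare graded dimensions (characters): both sides are freely generated — on the $\mathcal H$ side by the PBW theorem applied to the induced module $U(\mathcal H)\otimes_{U(\mathcal P)}\C v$, and on the tensor-product side by the universality of $M(1)$ and $Vir^c$ as vertex algebras freely generated by one weight-one and one weight-two field respectively — so the two characters coincide, forcing the surjection to be an isomorphism. Alternatively one can note the map is injective because $\mathcal V^{\ell_1,\ell_2,1}$ is itself a universal (induced, hence ``free'') object with the same generators-and-relations presentation as $M(1)_{\ell_2}\otimes Vir^c$.

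The main obstacle I anticipate is the bookkeeping of the central/anomaly terms: one must carefully track how the $C_2$-term $\delta_{m+r,0}(m^2+m)\ell_2$ in $[L(m),h(r)]$ conspires with the quadratic-in-$h$ part of $\omega_{\ell_2}$ so that $h(z)$ is exactly primary of conformal weight $1$ for $\omega_{\ell_2}$, and then how the ``$-1$'' and the ``$12\ell_2^2$'' in $c=\ell_1-1+12\ell_2^2$ arise (the $-1$ is the central charge of the free boson being subtracted, the $12\ell_2^2$ is the shift from the linear $h(-2)$ term). Everything else is a routine, if lengthy, application of the commutator formula; no conceptual difficulty beyond getting these constants right.
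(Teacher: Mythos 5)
The paper states this theorem without proof, calling it ``a vertex-algebraic version of the results from \cite{ACKP} (see also \cite{AJR}, \cite{GuW})''; there is no argument in the text to compare against. Your proposal --- form the coset Virasoro $\omega^{Vir}=\omega-\omega_{\ell_2}$, check by mode computation that $\widetilde L(m)=L(m)-(\omega_{\ell_2})_{m+1}$ kills $h(z)$ and hence all of the subalgebra generated by $h$, read off the central charge from additivity, and then upgrade the resulting surjection $M(1)_{\ell_2}\otimes Vir^c\twoheadrightarrow \mathcal V^{\ell_1,\ell_2,1}$ to an isomorphism by a character count --- is exactly the standard free-field decoupling argument those references contain, and it is correct in outline.

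Two small points you should watch while carrying out the bookkeeping you yourself flagged as the main hazard. First, the bracket $[L(m),h(r)]=r\,h(m+r)+\delta_{m+r,0}(m^2+m)C_2$ as printed has a sign problem: for $h(-n)$ to have $L(0)$-weight $n$ one needs $[L(0),h(-n)]=n\,h(-n)$, i.e.\ the linear term should be $-r\,h(m+r)$. Only with that sign does $(\omega_{\ell_2})_{m+1}$ reproduce both the $-r\,h(m+r)$ term and the anomaly $-\ell_2(m^2+m)\delta_{m+r,0}$ (using $C_2=\ell_2$, $C_3=1$), so that $\widetilde L(m)$ genuinely commutes with $h$; taking the printed sign literally, the difference $[L(m)-(\omega_{\ell_2})_{m+1},h(r)]$ would not vanish and one would instead be led to $\omega^{Vir}=L(-2)+\omega_{\ell_2}$, contradicting the theorem statement. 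Second, for the character comparison in your last step to close, the parabolic $\mathcal P$ used to induce $\mathcal V^{\ell_1,\ell_2,1}$ must include $L(-1)$ (equivalently $L(-1)\mathbf 1=0$), so that the PBW basis is indexed by $h(-n)$, $n\geq 1$, and $L(-n)$, $n\geq 2$, matching the tensor product; with $\mathcal P$ spanned only by nonnegative modes as written, the left-hand character carries an extra $(1-q)^{-1}$. With those adjustments, your argument goes through, and the additivity $\ell_1=c(\omega_{\ell_2})+c(\omega^{Vir})=(1-12\ell_2^2)+c$ gives $c=\ell_1-1+12\ell_2^2$ as claimed.
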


We shall now consider the case $\ell_2 =0$. Set $\mathcal V = \mathcal V^{\ell_1, 0, 1}$.
The vertex operator algebra $\mathcal V \cong M(1) \otimes Vir^{c}$ has the  automorphism $\sigma$ of order two which is lifted from the automorphism $h \mapsto -h$ of $M(1)$.  We know that
\begin{itemize}
\item  The category of  weak $\mathcal V$--modules is equivalent to the category of restricted $\mathcal H$--modules such that $C_1$ acts as $\ell_1 \mbox{Id}$,  $\ell_1 = c+1$, $C_2$ acts trivially, and $C_3$ acts as $\mbox{Id}$.

\item  The category of  $\sigma$--twisted weak $\mathcal V$--modules is equivalent to the category of restricted  $\mathcal H^{tw}$--modules such that $\bar C_1$ acts as $\ell_1 \mbox{Id}$,   and  $\bar C_2$ acts as $\mbox{Id}$.
\end{itemize}

The fixed point subalgebra is $\mathcal V^+ = M(1) ^+ \otimes Vir^c$. The irreducible  untwisted and twisted  modules for $\mathcal V$ are classified in \cite{TYZ}. We shall now see how some of irreducible $\mathcal V$--modules remain irreducible when we restrict them to $\mathcal V^+$.

Let $p,q\in{\Z}_{>0}$. We consider the subalgebra $\mathcal P^{(p,q)} \subset \mathcal H$ spanned by $L(n), h(m), C_1, C_2, C_3$, $n \ge p$, $m >q$.
   Let   ${\bf a} =(a_1, \dots, a_{q}) \in  ({\C}^*)^{q}$,  ${\bf b} = (b_1, \dots, b_p) \in  ({\C}^*)^p$, $c \in  {\C}$.  Define the $1$-dimensional
$\mathcal P^{(p,q)}$--module $  {\C} v_0$ with
\begin{eqnarray} & C_1\cdot v_0=(c +1) v_0,\,\,\, C_2 \cdot v_0=  0, \ \   C_3 \cdot v_0=  v_0\\
 &L(p)  v_0 = a_1 v_0, \cdots, L(p+q-1) v_0 = a_q v_0, \  L(i) v_0 = 0 \ \mbox{for} \ i >  p+q-1, \\
&h(q+1)  v_0 = b_1 v_0, \cdots   h ( p+q)  v_0 = b_p v_0, \    h_{i} v_0 = 0 \ \mbox{for} \  i > p+q,
 = 0 \ \mbox{for} \  i > p+q.\end{eqnarray}
 Then $ L _c ({\bf a},{\bf b}) = \mbox{Ind}_{\mathcal P^{(p,q)}} ^{\mathcal H} {\C} v_0$ is in irreducible $\mathcal H$--module (cf. \cite{TYZ}).
 Since $ L _c ({\bf a},{\bf b})$ is restricted module, we conclude that it is an irreducible weak module for $\mathcal V$.

 Completely
 {analogous},  we define  the modules $ L^{tw} _c ({\bf a},{\bf b})$ for mirror twisted Heisenberg-Virasoro algebra $\mathcal H^{tw}$. Then $ L^{tw} _c ({\bf a},{\bf b})$
 is an irreducible $\sigma$--twisted $\mathcal V$--module. Applying   Theorem  \ref{main1}	 in this case, we get:

 \begin{thm} Assume that ${\bf b} \ne 0$. Then $ L^{tw} _c ({\bf a},{\bf b})$  and $ L _c ({\bf a},{\bf b})$  are irreducible $\mathcal V^+ = M(1) ^+ \otimes Vir^c$--modules.
 \end{thm}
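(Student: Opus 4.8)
The plan is to apply Theorem \ref{main1} to the two module structures separately, after verifying the three hypotheses required there: that $V^+=(M(1)\otimes Vir^c)^{\langle\sigma\rangle}$ is the fixed-point subalgebra of a finite cyclic group $\langle\sigma\rangle$ with $\sigma$ central (automatic, since $\langle\sigma\rangle\cong\Z_2$ is abelian), that the relevant modules are irreducible weak $\sigma^i$-twisted modules, and that they are not isomorphic to their $\sigma$-conjugates. The first condition is immediate because $o(\sigma)=2$ and $Z(\langle\sigma\rangle)=\langle\sigma\rangle$. For the second, $L_c(\mathbf{a},\mathbf{b})$ is an irreducible $\mathcal H$-module by \cite{TYZ}, and since it is a restricted module it is an irreducible weak $\mathcal V$-module; analogously $L^{tw}_c(\mathbf{a},\mathbf{b})$ is an irreducible restricted $\mathcal H^{tw}$-module, hence an irreducible weak $\sigma$-twisted $\mathcal V$-module. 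Both have countable dimension since $\mathcal V$ does, so Schur's lemma and all the machinery of Section 4 apply.

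The crux is the non-isomorphism $L_c(\mathbf{a},\mathbf{b})\not\cong\sigma\circ L_c(\mathbf{a},\mathbf{b})$ (and similarly in the twisted case). Here I would compute the action of $\sigma$ on the defining data: since $\sigma$ is lifted from $h\mapsto -h$ on $M(1)$, conjugating the $\mathcal P^{(p,q)}$-action by $\sigma$ sends the generator $h(m)$ to $-h(m)$ while fixing all $L(n)$. Hence $\sigma\circ L_c(\mathbf{a},\mathbf{b})$ is the induced module built from the character $b_j\mapsto -b_j$, i.e. $\sigma\circ L_c(\mathbf{a},\mathbf{b})\cong L_c(\mathbf{a},-\mathbf{b})$. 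The classification in \cite{TYZ} tells us these induced modules are pairwise non-isomorphic for distinct data; since $\mathbf{b}\ne 0$ we have $-\mathbf{b}\ne\mathbf{b}$, so $L_c(\mathbf{a},\mathbf{b})\not\cong\sigma\circ L_c(\mathbf{a},\mathbf{b})$. The same argument, with $h(r)$ now indexed by $r\in\tfrac12+\Z$, gives $\sigma\circ L^{tw}_c(\mathbf{a},\mathbf{b})\cong L^{tw}_c(\mathbf{a},-\mathbf{b})\not\cong L^{tw}_c(\mathbf{a},\mathbf{b})$ when $\mathbf{b}\ne 0$.

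With these three facts in hand, Theorem \ref{main1} (applied with $V=\mathcal V$, $G=\langle\sigma\rangle$, $h=1$ for the untwisted case and $h=\sigma$ for the twisted case, $M=L_c(\mathbf{a},\mathbf{b})$ resp. $L^{tw}_c(\mathbf{a},\mathbf{b})$) immediately yields that $M$ is irreducible as a $\mathcal V^+$-module. Finally I would note $\mathcal V^+=M(1)^+\otimes Vir^c$ (this decomposition is stated in the paragraph preceding the theorem), giving the claimed form. The main obstacle I anticipate is purely bookkeeping: one must be careful that the invariant or distinctness statement about Whittaker-type functions $\mathbf{b}\mapsto-\mathbf{b}$ really does follow from the module classification in \cite{TYZ} rather than merely from the obvious inequality of characters—i.e. that non-isomorphism of these induced $\mathcal H$- (resp. $\mathcal H^{tw}$-) modules is genuinely recorded there. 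Everything else is a direct citation of Theorem \ref{main1} and the structural identification of $\mathcal V^+$.
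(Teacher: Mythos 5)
Your proposal is correct and takes the same route as the paper: the paper simply writes ``Applying Theorem \ref{main1} in this case, we get'' without further elaboration, and your write-up fills in exactly the verification that is being left to the reader — namely that $\sigma$ sends $h(m)\mapsto -h(m)$ and fixes the $L(n)$, so that $\sigma\circ L_c(\mathbf{a},\mathbf{b})\cong L_c(\mathbf{a},-\mathbf{b})$ (and likewise in the mirror-twisted case), which is non-isomorphic to $L_c(\mathbf{a},\mathbf{b})$ once $\mathbf{b}\neq 0$ by the classification in \cite{TYZ}. The choice of parameters in Theorem \ref{main1} ($G=\langle\sigma\rangle$, $h=1$ resp.\ $h=\sigma$) and the observation that the countability and $Z(G)$ hypotheses are automatic are all as the paper intends.
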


 It is shown in \cite{TYZ} that $ L^{tw} _c ({\bf a},{\bf b})$ and $ L _c ({\bf a},{\bf b})$ are not tensor product modules for $M(1) \otimes Vir^c$ (see also \cite{AP-22}). As a consequence we get:
 \begin{coro}
  $ L^{tw} _c ({\bf a},{\bf b})$ and $ L _c ({\bf a},{\bf b})$ are not tensor product modules for $M(1)^+ \otimes Vir^c$.
 \end{coro}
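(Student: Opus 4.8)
The plan is to deduce the corollary from the theorem just proved together with the fact, recalled immediately before the corollary, that neither $L_c({\bf a},{\bf b})$ nor $L^{tw}_c({\bf a},{\bf b})$ is a tensor product module for $M(1)\otimes Vir^c$. Write $N$ for $L_c({\bf a},{\bf b})$; the argument for $L^{tw}_c({\bf a},{\bf b})$ is the same with $\sigma$-twisted modules in place of ordinary ones. I would use that $\mathcal V=\mathcal V^{\ell_1,0,1}\cong M(1)\otimes Vir^c$ as vertex algebras, that $M(1)^+$ is the $\sigma$-fixed subalgebra of $M(1)$ so that $\mathcal V^+=M(1)^+\otimes Vir^c$, and that $\mathcal V$ is generated over $\mathcal V^+$ by the operators $Y_N(u\otimes{\bf 1},z)$ with $u\in M(1)^-$, the $(-1)$-eigenspace of $\sigma$ on $M(1)$ (indeed $u\otimes v=(u\otimes{\bf 1})_{-1}({\bf 1}\otimes v)$).

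Suppose, toward a contradiction, that $N\cong A\otimes B$ as an $M(1)^+\otimes Vir^c$-module, with $A$ an irreducible weak $M(1)^+$-module and $B$ an irreducible weak $Vir^c$-module; note $B$ has countable dimension, so the version of Schur's lemma recalled in the excerpt applies to it. Under this isomorphism the operators $Y_N(u\otimes{\bf 1},z)$ for $u\in M(1)^+$ act as $(\,\cdot\,)\otimes\mathrm{id}_B$ and those $Y_N({\bf 1}\otimes v,z)$ for $v\in Vir^c$ act as $\mathrm{id}_A\otimes(\,\cdot\,)$. The main step is to control the remaining generators $Y_N(u\otimes{\bf 1},z)$ for $u\in M(1)^-$. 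Since $u\otimes{\bf 1}$ and ${\bf 1}\otimes v$ lie in mutually commuting vertex subalgebras of $\mathcal V$ (here $(u\otimes{\bf 1})_n({\bf 1}\otimes v)=0$ for $n\ge 0$), the Jacobi identity in $N$ shows that every coefficient $(u\otimes{\bf 1})_n$ commutes with the whole action of ${\bf 1}\otimes Vir^c$, i.e. lies in $\mathrm{End}_{Vir^c}(A\otimes B)$. I would then prove $\mathrm{End}_{Vir^c}(A\otimes B)=\mathrm{End}_{\C}(A)\otimes\mathrm{id}_B$: if $\phi$ commutes with ${\bf 1}\otimes Vir^c$, then for each $a\in A$ the map $b\mapsto\phi(a\otimes b)$ is a $Vir^c$-homomorphism $B\to A\otimes B$, and since $\mathrm{Hom}_{Vir^c}(B,B)=\C$ by Schur, a short support argument on the tensor components gives $\phi(a\otimes b)=T(a)\otimes b$ for a linear $T\colon A\to A$. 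Hence $(u\otimes{\bf 1})_n=T_{u,n}\otimes\mathrm{id}_B$ for all $u\in M(1)^-$.

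It follows that, setting $Y_{\widetilde A}(u,z)$ to be the $A$-component of $Y_N(u\otimes{\bf 1},z)$ for $u\in M(1)$, one obtains a weak $M(1)$-module structure $\widetilde A$ on the vector space $A$ (all weak-module axioms are inherited from those of $N$, and $\widetilde A$ is irreducible since already its $M(1)^+$-action is), while $N\cong\widetilde A\otimes B$ as an $M(1)\otimes Vir^c$-module — contradicting the fact that $N=L_c({\bf a},{\bf b})$ is not a tensor product module for $M(1)\otimes Vir^c$. Running the same argument with $\sigma$-twisted modules — using that $\mathcal V^+$ is the $\sigma$-fixed subalgebra of $\mathcal V$, that a $\sigma$-twisted $\mathcal V$-module restricts to a weak $\mathcal V^+$-module, and that the half-integer-indexed operators coming from $M(1)^-\otimes{\bf 1}$ again commute with the ${\bf 1}\otimes Vir^c$ action — shows $L^{tw}_c({\bf a},{\bf b})$ is not a tensor product module for $M(1)^+\otimes Vir^c$ either. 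The only point requiring genuine care is the identity $\mathrm{End}_{Vir^c}(A\otimes B)=\mathrm{End}_{\C}(A)\otimes\mathrm{id}_B$ in infinite dimension; it goes through because $B$ is irreducible of countable dimension, so $\mathrm{Hom}_{Vir^c}(B,B)=\C$ and the remaining verification of module axioms and of the commutation relations between the two tensor factors is routine manipulation of the (twisted) Jacobi identity.
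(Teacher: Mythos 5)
Your argument is correct and is the natural reconstruction of the paper's intended (but unwritten) proof: the paper states the corollary only as a one-line consequence of the TYZ fact that these modules are not tensor products over $M(1)\otimes Vir^c$, and the key step you supply — that a tensor-product decomposition $N\cong A\otimes B$ over $M(1)^+\otimes Vir^c$ would, via Schur's lemma applied to the irreducible countable-dimensional factor $B$ together with the commutation of $M(1)\otimes{\bf 1}$ with ${\bf 1}\otimes Vir^c$, force the full $M(1)^-$-action into $\mathrm{End}_{\C}(A)\otimes\mathrm{id}_B$ and thus upgrade to a decomposition over $M(1)\otimes Vir^c$ — is exactly the right lemma, and it works identically in the $\sigma$-twisted case.
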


\vskip10pt {\footnotesize{}{ }\textbf{\footnotesize{}D.A.}{\footnotesize{}:
Department of Mathematics, Faculty of Science,  University of Zagreb, Bijeni\v{c}ka 30,
10 000 Zagreb, Croatia; }\texttt{\footnotesize{}adamovic@math.hr}{\footnotesize \par}

\textbf{\footnotesize{}C. L.}{\footnotesize{}: Institute of Mathematics,
Academia Sinica, Taipei 10617, Taiwan; }\texttt{\footnotesize{}chlam@math.sinica.edu.tw}{\footnotesize \par}

\textbf{\footnotesize{}V.P.T.}{\footnotesize{}: Department of Mathematics, Faculty of Science,
University of Zagreb,   Bijeni\v{c}ka 30, 10 000 Zagreb, Croatia; }\texttt{\footnotesize{}vpedic@math.hr}{\footnotesize \par}

\textbf{\footnotesize{}N.Y.}{\footnotesize{} School of Mathematical
Sciences, Xiamen University, Fujian, 361005, China;} \texttt{\footnotesize{}
ninayu@xmu.edu.cn}{\footnotesize \par}

{\footnotesize{}}}{\footnotesize \par}

\end{document}